\newtheorem{thm}{Theorem}[section]
\newtheorem{defn}[thm]{Definition}
\newtheorem{lem}[thm]{Lemma}
\newtheorem{cor}[thm]{Corollary}
\newtheorem{rem}[thm]{Remark}
\newtheorem{prop}[thm]{Proposition}
\newcommand{\edgesint}{\mathcal{E}_{\operatorname{int}}}
\newcommand{\reg}{\operatorname{reg}(\mathcal{T})}
\newcommand{\Tau}{\mathcal{T}}
\newcommand{\dkl}{d_{K|L}}
\newcommand{\E}{\mathbb{E}}
\newcommand{\N}{\mathbb{N}}
\newcommand{\R}{\mathbb{R}}
\newcommand{\erww}[1]{\mathbb{E}\left[#1\right]}
\begin{document}

\title{Convergence rates for a finite volume scheme of the stochastic heat equation}
\author{Niklas Sapountzoglou \thanks{Institute of Mathematics, Clausthal University of Technology, 38678 Clausthal-Zellerfeld, Germany \href{mailto:niklas.sapountzoglou@tu-clausthal.de}{\texttt{niklas.sapountzoglou@tu-clausthal.de}}}\and  Aleksandra Zimmermann \thanks{Institute of Mathematics, Clausthal University of Technology, 38678 Clausthal-Zellerfeld, Germany \href{mailto:aleksandra.zimmermann@tu-clausthal.de}{\texttt{aleksandra.zimmermann@tu-clausthal.de}}}}
\date{}

\maketitle

\begin{abstract}
In this contribution, we provide convergence rates for a finite volume scheme of the stochastic heat equation with multiplicative Lipschitz noise and homogeneous Neumann boundary conditions (SHE). More precisely, we give an error estimate for the $L^2$-norm of the space-time discretization of SHE by a semi-implicit Euler scheme with respect to time and a TPFA scheme with respect to space and the variational solution of SHE. The only regularity assumptions additionally needed is spatial regularity of the initial datum and smoothness of the diffusive term.
\end{abstract}
\noindent
\textbf{Keywords:} Finite volume scheme $\bullet$ error estimates 
$\bullet$ stochastic heat equation $\bullet$ \\
multiplicative noise $\bullet$ space-time discretization \\

\noindent
\textbf{Mathematics Subject Classification (2020):} 60H15 $\bullet$ 35K05 $\bullet$ 65M08

\section{Introduction}
Let $\Lambda$ be a bounded, open, connected, and polygonal set of $\mathbb{R}^d$ with $d=2,3$.
Moreover let $(\Omega,\mathcal{A},\mathds{P})$ be a probability space endowed with a right-continuous, complete filtration $(\mathcal{F}_t)_{t\geq 0}$ and let $(W(t))_{t\geq 0}$ be a standard, one-dimensional Brownian motion with respect to $(\mathcal{F}_t)_{t\geq 0}$ on $(\Omega,\mathcal{A},\mathds{P})$. For a measurable space $(\mathcal{S},\mu)$, a separable Banach space $X$ and $1\leq \nu \leq \infty$, we will denote the classical Lebesgue-Bochner space by $L^{\nu}(S;X)$. If $D \subset \R^d$ is open and $k \in \N$ we will write $H^k(D):=\{v \in L^2(D);~\partial_{\alpha} v \in L^2(D)~\forall \alpha \text{ with } |\alpha| \leq k\}$ in the sequel.
For $T>0$, we consider the stochastic heat equation forced by a multiplicative stochastic noise:
\begin{align}\label{equation}
\begin{aligned}
du-\Delta u\,dt &=g(u)\,dW(t), &&\text{ in }\Omega\times(0,T)\times\Lambda;\\
u(0,\cdot)&=u_0, &&\text{ in } \Omega\times\Lambda;\\
\nabla u\cdot \mathbf{n}&=0, &&\text{ on }\Omega\times(0,T)\times\partial\Lambda;
\end{aligned}\tag{SHE}
\end{align}
where $\mathbf{n}$ denotes the unit normal vector to $\partial\Lambda$ outward to $\Lambda$, $u_0\in L^2(\Omega;L^2(\Lambda))$ is $\mathcal{F}_0$-measurable, $\Delta$ denotes the Laplace operator on $H^1(\Lambda)$ associated with the weak formulation of the homogeneous Neumann boundary condition, and $g: \R \to \R$ is a Lipschitz function.

\subsection{State of the art}
In the literature, the existence and uniqueness of variational solutions to Problem \eqref{equation} (see Definition \ref{solution}) is well known and is covered by the classical framework about stochastic parabolic equations, see, \textit{e.g.}, \cite{LR, KryRoz81, PardouxThese, DPraZab}. We are interested in studying numerical schemes for parabolic stochastic partial differential equations (SPDEs) of type \eqref{equation}.\\
In the last decades, numerical schemes for SPDEs have been studied extensively in the literature. For an overview of the state of the art, see, \textit{e.g.}, \cite{ACQS20, DP09, OPW22}. Finite volume schemes have been studied for stochastic scalar conservation laws in, \textit{e.g.}, \cite{BCG16_01, BCG16_02, BCC20}. 
For elliptic and parabolic second-order problems, we recall that finite volume methods allow flexibility on the geometry of the meshes and ensure the local consistency of the numerical fluxes inside the domain, as mentioned in \cite{ABH07}. Furthermore, this kind of discretization is well-suited if one adds a convective first-order term to the equation. If the leading second-order operator in the equation is the Laplacian, one may use standard cell-centered finite volume methods (see, e.g., \cite{EGH00}) such as the two point flux approximation scheme (TPFA). The case of general linear, elliptic, second-order deterministic equations has been studied in \cite{CVV99, DE06, EGH06}. In these cases, the gradient reconstruction procedure requires more than the normal direction between two centers of neighbouring cells and the finite volume scheme becomes more involved, from the theoretical and from the numerical point of view. In this contribution, we restricted ourselves to the case of the Laplacian and the TPFA scheme.
The convergence of a full space-time discretization by a finite volume scheme towards the unique variational solution of the stochastic heat equation in two spatial dimensions has been studied in \cite{BN20} in the case of a linear multiplicative noise and in \cite{BNSZ23} in the more general case of a multiplicative Lipschitz noise. In these contributions, a semi-implicit Euler scheme has been used for the time discretization and a TPFA has been used with respect to space. In \cite{BN20}, the convergence could be proved by classical arguments, since the linear multiplicative noise is compatible with the weak convergences given by the \textit{a-priori} estimates. In \cite{BNSZ23}, the convergence of the nonlinear multiplicative noise term has been addressed using the stochastic compactness method based on the theorems of Prokhorov and Skorokhod. This finite volume scheme has been extended to a stochastic convection-diffusion equation with nonlinearities of zeroth and first order in two or three spatial dimensions in \cite{BNSZ23_2} and \cite{BSZ24}. Here, the convection term is approximated by an upwind scheme and the use of the stochastic compactness method could be avoided, which makes the approach more accessible without deeper knowledge of stochastic analysis.\\
For all these finite volume schemes, convergence has been proved without giving any rate. To the best of our knowledge, there are no results about convergence rates of finite volume schemes for parabolic SPDEs in the variational setting. However, there are many results concerning convergence rates for different numerical schemes of SPDEs, see, \textit{e.g.}, \cite{MajPro18, BP23, KG22} and, in particular, see \cite{GM05, GM07, BHL21, DHW23} for space-time discretizations and convergence rates for nonlinear monotone stochastic evolution equations. In our study, we want to provide error estimates for the finite volume scheme proposed in \cite{BNSZ23} in space dimensions $d=2$ and $d=3$ under rather mild and natural regularity assumptions on the initial condition $u_0$ and on the diffusive term $g$ in the stochastic integral.
\subsection{Aim of the study}
Our aim is to extend the existence and uniqueness result for variational solutions of Problem \eqref{equation} by providing a result about convergence rates of a finite volume scheme for parabolic SPDEs with homogeneous Neumann boundary conditions. According to the existing convergence results, we consider the case of two and three spatial dimensions. We remark that the finite volume scheme proposed in \cite{BNSZ23} does not coincide with the finite element methods and other numerical schemes for which convergence rates for \eqref{equation} have been considered.\\
Our study provides error estimates for the $L^2$-norm of the space-time discretization of \eqref{equation} proposed in \cite{BNSZ23} and the variational solution of \eqref{equation} of order $O(\tau^{1/2} + h + h \tau^{- 1/2})$, where $\tau$ represents the time step and $h$ the spatial parameter. In the deterministic case, see \textit{e.g.} \cite{Flore21}, one may get rid of the unsatisfying term $h \tau^{-1/2}$ by assuming the exact solution to be two times differentiable in time. Since we cannot assume more than continuity in time in the stochastic case, the stochastic nature of Problem \eqref{equation} creates worse convergence rates compared to the deterministic case. Convergence rates of order $O(h\tau^{-1/2})$ do also appear for mixed finite element schemes for the stochastic Navier-Stokes equation (see \cite{FQ21}).\\
The main idea of the work is to compare the exact solution of Problem \eqref{equation} with the solution of the semi-implicit Euler scheme first. In order to do this, we need to assume $H^2$-regularity in space for the initial value $u_0$ and smoothness for the function $g$ in the stochastic It\^{o} integral. 
This error turns out to be of order $O(\tau^{1/2})$. Then, we compare the solution of the semi-implicit Euler scheme with its parabolic projection (see Definition \ref{Definition parabolic projection}), whose error is of order $O(h)$. Finally, comparing the parabolic projection of the solution of the semi-implicit Euler scheme with the solution of the finite volume scheme provides an error of order $O(h + h \tau^{-1/2})$.
\subsection{Outline}
The paper is organized as follows. In Section 2 we introduce the considered problem as well as the finite volume framework. Moreover, we provide some analytic tools which are used frequently throughout the paper.\\
In Section 3 we state the main result and the regularity assumptions.\\
In Section 4 we introduce the semi-implicit Euler scheme for the stochastic heat equation with Neumann boundary conditions and we provide estimates for its solution.\\
Section 5 contains stability estimates for the exact solution of the stochastic heat equation and a regularity result.\\
In Section 6 we prove error estimates, more precisely we estimate the $L^2$-error between the exact solution and the solution of the semi-implicit Euler scheme, the $L^2$-error between the solution of the semi-implicit Euler scheme and its parabolic projection and finally the $L^2$-error between the parabolic projection and the solution of the finite volume scheme.\\
In Section 7, we collect the previous results and prove our main theorem.\\
Finally, we present our computational experiments in Section 8.
\section{A Finite Volume scheme for the stochastic heat equation}
\subsection{The stochastic heat equation with multiplicative noise}
For the well-posedness of a variational solution to the stochastic heat equation, we assume the following regularity on the data:
\begin{itemize}
\item[$i.)$] $u_0\in L^2(\Omega;L^2(\Lambda))$ is $\mathcal{F}_0$-measurable.
\item[$ii.)$] $g:\mathbb{R}\rightarrow\mathbb{R}$ is a Lipschitz continuous function and, in particular, there exists a constant $C_g>0$ such that $|g(r)|^2 \leq C_g(1+r^2)$ for all $r\in\mathbb{R}$.\\
\end{itemize}
For $u_0$ and $g$ that satisfy the above assumptions, we will be interested in the following concept of \textit{variational solution} for \eqref{equation}:
\begin{defn}\label{solution} A predictable stochastic process $u$ in $L^2\left(\Omega\times(0,T);L^2(\Lambda)\right)$ is 
a variational solution to Problem \eqref{equation} if it belongs to 
\begin{align*}
 L^2(\Omega;\mathcal{C}([0,T];L^2(\Lambda)))\cap L^2(\Omega;L^2(0,T;H^1(\Lambda)))
\end{align*}
and satisfies, for all $t\in[0,T]$,
\begin{align}\label{240311_eq1}
u(t)-u_0-\int_0^t \Delta u(s)\,ds=\int_0^t g(u(s))\,dW(s)
\end{align}
in $L^2(\Lambda)$ and $\mathds{P}$-a.s. in $\Omega$, where $\Delta$ denotes the Laplace operator on $H^1(\Lambda)$ associated with the weak formulation of the homogeneous Neumann boundary condition.
\end{defn}
\begin{rem}
Existence, uniqueness and regularity of a variational solution to \eqref{equation} in the sense of Definition \ref{solution} is well-known in the literature, see, e.g., \cite{PardouxThese, KryRoz81, LR}.
\end{rem}
\subsection{Admissible meshes and notations}
Let us introduce the temporal and spatial discretizations. For the time-discretization, let $N\in\mathbb{N}$ be given. We define the equidistant time step $\tau=\frac{T}{N}$ and divide the interval $[0,T]$ in $0=t_0<t_1<...<t_N=T$ with $t_n=n \tau$ for all $n\in \{0, ..., N-1\}$.
For the space discretization, we consider admissible meshes in the sense of \cite{EGH00}, Definition 9.1. For the sake of self-containedness, we add the definition:
\begin{defn}[see \cite{EGH00}, Definition 9.1]\label{defmesh} 
For a bounded domain $\Lambda\subset\mathbb{R}^d$, $d=2$ or $d=3$, an admissible finite-volume mesh $\mathcal{T}$ is given by a family of open, polygonal, and convex subsets $K$, called \textit{control volumes}
and a family of subsets of $\overline{\Lambda}$, contained in hyperplanes of $\mathbb{R}^d$ with strictly positive $(d-1)$-dimensional Lebesgue measure, denoted by $\mathcal{E}$. The elements of $\mathcal{E}$ are called \textit{edges} (for $d=2$) or \textit{sides} (for $d=3$) of the control volumes. Finally we associate a family $\mathcal{P}=(x_K)_{K\in\mathcal{T}}$ of points in $\Lambda$ to the family of control volumes, called \textit{centers}. 
We assume $K\in\mathcal{T}$, $\mathcal{E}$ and $\mathcal{P}$ to satisfy the following properties:
\begin{itemize}
\item $\overline{\Lambda}=\bigcup_{K\in\mathcal{T}}\overline{K}$.
\item For any $K\in\mathcal{T}$, there exists a subset $\mathcal{E}_K$ of $\mathcal{E}$ such that $\partial K=\overline{K}\setminus K=\bigcup_{\sigma\in \mathcal{E}_K}\overline{\sigma}$ and $\mathcal{E}=\bigcup_{K\in\mathcal{T}}\mathcal{E}_K$. $\mathcal{E}_K$ is called the set of edges of $K$ for $d=2$ and sides for $d=3$, respectively.
\item If $K,L\in\mathcal{T}$ with $K\neq L$ then either the $(d-1)$-dimensional Lebesgue measure of $\overline{K}\cap\overline{L}$ is $0$  or $\overline{K}\cap\overline{L}=\overline{\sigma}$ for some $\sigma\in \mathcal{E}$, which will then be denoted by $K|L$.
\item $\mathcal{P}=(x_K)_{K\in\mathcal{T}}$ is such that $x_K\in \overline{K}$ for all $K\in\mathcal{T}$ and, if $K,L\in\mathcal{T}$ are two neighboring control volumes, $x_K\neq x_L$ and the straight line between the centers $x_K$ and $x_L$ is orthogonal to the edge $\sigma=K|L$.
\item For any $\sigma\in\mathcal{E}$ such that $\sigma\subset\partial\Lambda$, let $K$ be the control volume such that $\sigma \in\mathcal{E}_K$. If $x_K\notin\sigma$, the straight line going through $x_K$ and orthogonal to $\sigma$ has a nonempty intersection with $\sigma$.
\end{itemize}
\end{defn}
Examples of admissible meshes are triangular meshes for $d=2$, where the condition that angles of the triangles are less than $\frac{\pi}{2}$ ensures $x_K\in K$. For $d=3$, Voronoi meshes are admissible. See \cite{EGH00}, Example 9.1 and 9.2 for more details.
In the following, we will use the following notation:
\begin{itemize}
\item $h=\operatorname{size}(\mathcal{T})=\sup\{\operatorname{diam}(K): K\in\mathcal{T}\}$ the mesh size.
\item $\mathcal{E}_{\operatorname{int}}:=\{\sigma\in\mathcal{E}:\sigma\nsubseteq \partial\Lambda\}$, $\mathcal{E}_{\operatorname{ext}}:=\{\sigma\in\mathcal{E}:\sigma\subseteq \partial\Lambda\}$, $\mathcal{E}^{K}_{\operatorname{int}}=\mathcal{E}_{\operatorname{int}}\cap \mathcal{E}_{K}$ for $K\in\mathcal{T}$.

\item For $K\in\mathcal{T}$, $\sigma\in\mathcal{E}$ and $d=2$ or $d=3$, let $m_K$ be the $d$-dimensional Lebesgue measure of $K$ and let $m_\sigma$ be the $(d-1)$ dimensional Lebesgue measure of $\sigma$.

\item For $K\in\mathcal{T}$, $\mathbf{n}_K$ denotes the unit normal vector to $\partial K$ outward to $K$ and for $\sigma \in \mathcal{E}_K$, we denote the unit vector on the edge $\sigma$ pointing out of $K$ by $\mathbf{n}_{K,\sigma}$.

\item Let $K,L\in\mathcal{T}$ be two neighboring control volumes. For $\sigma=K|L\in\edgesint$, let $d_{K|L}$ denote the Euclidean distance between $x_K$ and $x_L$.
\end{itemize}

Using these notations, we introduce a positive number 
\begin{align}\label{mrp}
\reg=\max\left(\mathcal N,\max_{\scriptscriptstyle K \in\mathcal{T} \atop \sigma\in\mathcal{E}_K} \frac{\operatorname{diam}(K)}{d(x_K,\sigma)}\right)
\end{align}
(where $\mathcal N$ is the maximum of edges incident to any vertex) that measures the regularity of a given mesh and is useful to perform the convergence analysis of finite-volume schemes.
This number should be uniformly bounded by a constant $\chi>0$ not depending on the mesh size $h$ for the convergence results to hold, \textit{i.e.},
\begin{align}\label{mesh_regularity}
\reg \leq \chi.
\end{align}
We have in particular $\forall K,L\in \mathcal{T}$,  
\begin{align*}
\frac{h}{\dkl}\leq \reg.
\end{align*} 

\subsection{The finite volume scheme}

Let $\mathcal{T}_h$ be an admissible finite volume mesh with mesh size $h>0$. For a $\mathcal{F}_0$-measurable random element $u_0\in L^2(\Omega;L^2(\Lambda))$ we define $\mathbb{P}$-a.s. in $\Omega$ its piecewise constant spatial discretization $u_h^0= \sum_{K \in \mathcal{T}_h} u_K^0 \mathds{1}_K$, where
\begin{align*}
u_K^0= \frac{1}{m_K} \int_K u_0(x) \, dx, ~~\forall K \in \mathcal{T}_h.
\end{align*}
The finite-volume scheme we propose reads, for the initially given $\mathcal{F}_0$-measurable random vector $(u_K^0)_{K\in\mathcal{T}_h}$ induced by $u_h^0$, as follows:
For any $n \in \{1,\cdots,N\}$, given the $\mathcal{F}_{t_{n-1}}$-measurable random vector $(u^{n-1}_K)_{K\in\mathcal{T}_h}$ we search for a $\mathcal{F}_{t_{n}}$-measurable random vector $(u^{n}_K)_{K\in\mathcal{T}_h}$ such that, for almost every $\omega\in\Omega$, $(u^{n}_K)_{K\in\mathcal{T}_h}$ is solution to the following random equations
\begin{align}\label{FVS}
m_K(u_K^n - u_K^{n-1}) + \tau \sum\limits_{\sigma \in \mathcal{E}_K^{int}} \frac{m_{\sigma}}{d_{K|L}} (u_K^n - u_L^n)= m_K g(u_K^{n-1}) \Delta_nW, ~~~\forall K \in \mathcal{T}_h,
\end{align}
where $0=t_0<t_1<...<t_N=T$, $\tau=t_n - t_{n-1}=T/N$, and $\Delta W_n:= W(t_n)-W(t_{n-1})$ for all $n \in \{1,...,N\}$.\\
\begin{rem}
	The second term on the left-hand side of \eqref{FVS} is the classical two-point flux approximation of the Laplace operator, (see~\cite[Section 10]{EGH00} for more details on the two-point flux approximation of the Laplace operator with Neumann boundary conditions). 
\end{rem}
With any finite sequence of unknowns $(w_{K})_{K\in\Tau_h}$, we may associate the piecewise constant function
\[w_h(x)=\sum_{K\in \Tau_h} w_K\mathds{1}_K(x), ~x\in\Lambda,\]
where $\mathds{1}_K$ denotes the standard indicator function on $K$, i.e., $\mathds{1}_K(x)=1$ for $x \in K$ and $\mathds{1}_K(x)=0$ for $x \notin K$.
The discrete $L^2$-norm for $w_h$ is then given by
\[\Vert w_h\Vert_2^2=\sum_{K\in\Tau_h}m_K |w_K|^2\]
and we may define its discrete $H^1$-seminorm by
\[
|w_h|_{1,h}^2:= \sum_{\sigma\in\mathcal{E}_{\operatorname{int}}} \frac{m_{\sigma}}{d_{K|L}} (w_K-w_L)^2.\]
Consequently, if $(u^{n}_K)_{K\in\mathcal{T}_h}$ is the solution to \eqref{FVS}, we will consider 
\[u_h^n= \sum_{K \in \mathcal{T}_h} u_K^n \mathds{1}_K\]
for any $n \in \{1,...,N\}$ a.s. in $\Omega$.
\subsection{Tools from discrete analysis}
In the sequel, the following discrete Gronwall inequality will be frequently used:

\begin{lem}[\cite{S69}, Lemma 1]\label{Gronwall}
Let $N \in \mathbb{N}$ and $a_n, b_n, \alpha \geq 0$ for all $n \in \{1,...,N\}$. Assume that for every $n \in \{1,...,N\}$
\begin{align*}
a_n \leq \alpha + \sum\limits_{k=0}^{n-1} a_k b_k.
\end{align*}
Then, for any $n \in \{1,...,N\}$ we have
\begin{align*}
a_n \leq \alpha \exp\bigg(\sum\limits_{k=0}^{n-1}b_k \bigg).
\end{align*}
\end{lem}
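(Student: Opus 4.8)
The plan is to replace the quantity $a_n$, which only satisfies an \emph{inequality}, by the monotone majorant appearing on the right-hand side, and to derive for that majorant a clean multiplicative recursion. Concretely, I would introduce
\begin{align*}
S_n := \alpha + \sum_{k=0}^{n-1} a_k b_k, \qquad n \in \{1, \dots, N\},
\end{align*}
together with $S_0 := \alpha$ (the empty sum), so that the hypothesis reads simply $a_n \leq S_n$ for every $n$. Since all the $a_k$ and $b_k$ are nonnegative, the sequence $(S_n)$ is nondecreasing, which is the structural fact that makes the whole argument go through.

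The key step is the recursion. For $n \in \{0, \dots, N-1\}$ one computes
\begin{align*}
S_{n+1} = S_n + a_n b_n \leq S_n + S_n b_n = (1 + b_n)\, S_n,
\end{align*}
where the inequality uses $a_n \leq S_n$ together with $b_n \geq 0$. Iterating this bound starting from $S_0 = \alpha$, a straightforward induction on $n$ then yields
\begin{align*}
S_n \leq \alpha \prod_{k=0}^{n-1} (1 + b_k).
\end{align*}

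To pass to the exponential form, I would invoke the elementary inequality $1 + x \leq e^x$, valid for all $x \geq 0$, applied with $x = b_k$. This converts the product into
\begin{align*}
\prod_{k=0}^{n-1}(1 + b_k) \leq \prod_{k=0}^{n-1} e^{b_k} = \exp\bigg(\sum_{k=0}^{n-1} b_k\bigg),
\end{align*}
and combining this with $a_n \leq S_n$ gives precisely the claimed estimate $a_n \leq \alpha \exp\big(\sum_{k=0}^{n-1} b_k\big)$.

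I do not expect a genuine obstacle here, since the whole argument reduces to an induction; the only points requiring a little care are the bookkeeping at the base case, namely reading the empty sum as $S_0 = \alpha$ so that the recursion is correctly seeded, and keeping track of where nonnegativity is used. Indeed, the nonnegativity of each $b_k$ is what simultaneously justifies the monotonicity of $(S_n)$, the passage $a_nb_n \leq S_n b_n$, and the final application of $1 + x \leq e^x$, so it must hold for every index entering the sum.
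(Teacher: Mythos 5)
The paper itself gives no proof of this lemma: it is quoted verbatim from \cite{S69} with a citation only, so there is no in-paper argument to compare against. Your proof is the standard one, and its inductive core is sound: with $S_n := \alpha + \sum_{k=0}^{n-1} a_k b_k$ and $S_0 := \alpha$ one gets $S_{n+1} = S_n + a_n b_n \leq (1+b_n)S_n$, hence $S_n \leq \alpha \prod_{k=0}^{n-1}(1+b_k) \leq \alpha \exp\big(\sum_{k=0}^{n-1} b_k\big)$ by $1+x \leq e^x$, and $a_n \leq S_n$ concludes. The intermediate product bound is in fact slightly sharper than the stated exponential form.

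One point deserves more care than you give it. The very first step of your recursion, $S_1 = \alpha + a_0 b_0 \leq (1+b_0)\alpha$, requires $a_0 \leq S_0 = \alpha$. This is not merely ``reading the empty sum correctly'': the hypothesis of the lemma is asserted only for $n \in \{1,\dots,N\}$, so $a_0 \leq \alpha$ is a genuinely additional assumption, not an instance of the stated one. Without it the conclusion fails; take $\alpha = 1$, $a_0 = 100$, $b_0 = 1$, $a_1 = 101$: then $a_1 \leq \alpha + a_0 b_0$ holds, yet $a_1 > \alpha e^{b_0} = e$. So you should either state explicitly that the hypothesis is assumed to hold for $n=0$ as well (which is how the lemma is invoked throughout the paper, where the $n=0$ term is always absorbed into $\alpha$), or add $a_0 \leq \alpha$ to the hypotheses. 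With that proviso made explicit, your argument is complete and correct.
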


Moreover, we use the following version of discrete Poincar\'{e} inequality:
\begin{lem}[\cite{B_CC_HF15}, Theorem 3.6, see also \cite{Flore21}, Lemma 1]\label{Lemma PI}
There exists a constant $C_p>0$, only depending on $\Lambda$, such that for all admissible meshes $\mathcal{T}_h$ and for all piecewise constant functions $w_h=(w_K)_{K \in \mathcal{T}_h}$ we have
\begin{align}\label{291123_01}
\Vert w_h \Vert_2^2 \leq C_p |w_h|_{{1, h}}^2 + 2|\Lambda|^{-1} \bigg( \int_{\Lambda} w_h(x) \, dx \bigg)^2. \tag{PI}
\end{align}
\end{lem}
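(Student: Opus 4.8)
The plan is to reduce the statement to a Poincaré--Wirtinger (mean) inequality for piecewise constant functions, and then to establish the latter by a telescoping argument over the interior edges crossed by paths joining pairs of points, exploiting the geometry of $\Lambda$ and the mesh regularity \eqref{mesh_regularity}.

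First I would introduce the mean value $\bar{w}:=|\Lambda|^{-1}\int_\Lambda w_h(x)\,dx=|\Lambda|^{-1}\sum_{K\in\Tau_h}m_K w_K$ and split each cell value as $w_K=(w_K-\bar{w})+\bar{w}$. Using the elementary inequality $(a+b)^2\leq 2a^2+2b^2$ together with $\sum_{K\in\Tau_h}m_K=|\Lambda|$ yields
\begin{align*}
\Vert w_h\Vert_2^2=\sum_{K\in\Tau_h}m_K w_K^2\leq 2\sum_{K\in\Tau_h}m_K(w_K-\bar{w})^2+2|\Lambda|\,\bar{w}^2=2\Vert w_h-\bar{w}\Vert_2^2+2|\Lambda|^{-1}\Big(\int_\Lambda w_h(x)\,dx\Big)^2 .
\end{align*}
This already produces the mean term with the prefactor $2|\Lambda|^{-1}$ appearing in \eqref{291123_01}, so it remains to prove a mean Poincaré inequality $\Vert w_h-\bar{w}\Vert_2^2\leq \tilde{C}\,|w_h|_{1,h}^2$, with $\tilde{C}$ depending only on $\Lambda$ and $\chi$; the claim then follows with $C_p=2\tilde{C}$.

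For the mean inequality I would first pass to a double sum over the control volumes. Writing $w_K-\bar{w}=|\Lambda|^{-1}\sum_{L\in\Tau_h}m_L(w_K-w_L)$ and applying the Cauchy--Schwarz inequality together with $\sum_L m_L=|\Lambda|$ gives $(w_K-\bar{w})^2\leq |\Lambda|^{-1}\sum_{L\in\Tau_h}m_L(w_K-w_L)^2$, hence
\begin{align*}
\Vert w_h-\bar{w}\Vert_2^2\leq |\Lambda|^{-1}\sum_{K,L\in\Tau_h}m_K m_L\,(w_K-w_L)^2=|\Lambda|^{-1}\int_\Lambda\int_\Lambda\big(w_h(x)-w_h(y)\big)^2\,dx\,dy .
\end{align*}
Everything thus reduces to estimating the mean squared variation of $w_h$ over all pairs of points by the jumps across interior edges.

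The geometric core is then to control $(w_h(x)-w_h(y))^2$ in terms of the edge jumps. Since $\Lambda$ is polygonal and connected, any two points $x,y$ can be joined by a path inside $\Lambda$ (a straight segment when $\Lambda$ is convex, and a polygonal path of length and crossing number controlled by the geometry of $\Lambda$ in general), which crosses a finite sequence of edges $\sigma\in\edgesint$; telescoping along this path gives $w_h(x)-w_h(y)=\sum_{\sigma\in\edgesint}\pm\,\chi_\sigma(x,y)\,(w_K-w_L)$, where $\chi_\sigma(x,y)=1$ if the path crosses $\sigma=K|L$ and $0$ otherwise. A Cauchy--Schwarz step with the weights $\dkl$ then yields $(w_h(x)-w_h(y))^2\leq\big(\sum_\sigma \chi_\sigma(x,y)\,\dkl\big)\big(\sum_\sigma \chi_\sigma(x,y)\,\dkl^{-1}(w_K-w_L)^2\big)$, where the first factor is bounded by a constant multiple of $\operatorname{diam}(\Lambda)$, the multiple depending only on $\chi$. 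Integrating in $x$ and $y$ and interchanging summation and integration reduces the whole problem to a geometric counting estimate for $\int_\Lambda\int_\Lambda \chi_\sigma(x,y)\,dx\,dy$, the measure of the pairs whose connecting path crosses a fixed edge $\sigma$. I expect this last step to be the main obstacle: one has to show, by a co-area type argument parametrising the crossing point on $\sigma$ and the lengths on either side, that this measure is bounded by a constant times $m_\sigma\,\dkl$ up to factors absorbed through the orthogonality of the segment $[x_K,x_L]$ to $\sigma=K|L$ and the uniform bound $\reg\leq\chi$ from \eqref{mesh_regularity}. Once this is established, summation over $\sigma\in\edgesint$ reconstructs precisely the weights $m_\sigma/\dkl$ of the discrete seminorm $|w_h|_{1,h}^2$, which completes the argument.
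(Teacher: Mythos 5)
The paper offers no proof of this lemma at all: it is imported verbatim from the literature (\cite{B_CC_HF15}, Theorem~3.6; \cite{Flore21}, Lemma~1), and the classical proof there (going back to Eymard--Gallou\"et--Herbin) is exactly the strategy you outline --- split off the mean, reduce to $\Vert w_h-\bar w\Vert_2^2\leq\tilde C|w_h|_{1,h}^2$ via the double integral of $(w_h(x)-w_h(y))^2$, telescope along segments joining $x$ to $y$, apply Cauchy--Schwarz, and finish with a per-edge counting estimate. Your two preliminary reductions are correct and clean: the $(a+b)^2\leq 2a^2+2b^2$ split reproduces the prefactor $2|\Lambda|^{-1}$ of \eqref{291123_01} exactly, and the Jensen step $(w_K-\bar w)^2\leq|\Lambda|^{-1}\sum_L m_L(w_K-w_L)^2$ is valid.

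The genuine gaps are in the quantitative core. (a) Your unweighted Cauchy--Schwarz requires $\sum_\sigma\chi_\sigma(x,y)\,\dkl\leq C(\chi)\operatorname{diam}(\Lambda)$, which you assert but do not prove; this is precisely where the standard proof inserts the angle factor $|c_\sigma\cdot\eta|$ with $\eta=(y-x)/|y-x|$ and $c_\sigma$ the unit normal to $\sigma$: since $\dkl\,c_\sigma\cdot\eta=(x_L-x_K)\cdot\eta>0$ for consecutively traversed cells, the weighted sum telescopes to $(x_{K_{\mathrm{last}}}-x_{K_{\mathrm{first}}})\cdot\eta\leq\operatorname{diam}(\Lambda)$ with no mesh-regularity constant at all, whereas without the angle factor a segment nearly parallel to a run of edges makes your bound at best delicate to justify. (b) The target you state for the counting estimate, $\int_\Lambda\int_\Lambda\chi_\sigma(x,y)\,dx\,dy\lesssim m_\sigma\dkl$, cannot be correct: it is dimensionally inconsistent (the left-hand side scales like $L^{2d}$, the right like $L^{d}$), and feeding it into your chain would yield a right-hand side of order $\sum_\sigma m_\sigma(w_K-w_L)^2\leq h\,|w_h|_{1,h}^2$, an estimate that degenerates as $h\to0$ and so cannot hold. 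The correct statement, obtained from the polar parametrisation $y=x+\rho\eta$, is $\int\int\chi_\sigma(x,y)\,|c_\sigma\cdot\eta|^{-1}\,dx\,dy\leq C_d\,m_\sigma\operatorname{diam}(\Lambda)^{d+1}$, the factor $|c_\sigma\cdot\eta|^{-1}$ (coming from the second Cauchy--Schwarz factor) cancelling the Jacobian of the crossing-point parametrisation; $\dkl$ plays no role in it. (c) For non-convex polygonal $\Lambda$ the straight segment may leave the domain and a polygonal path may cross an edge more than once, so both $\chi_\sigma\in\{0,1\}$ and the telescoping identity need the decomposition of $\Lambda$ into finitely many convex pieces (or the finitely-many-directions argument of \cite{EGH00}), which you only gesture at. None of this invalidates the overall strategy, but as written the two displayed estimates at the heart of your argument are, respectively, unproven and false as stated.
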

Furthermore, we will use the upcoming version of the discrete integration by parts rule
\begin{rem}[\cite{BNSZ23}, Remark 2.8]\label{DIBP}
Consider two finite sequences $(w_K)_{K \in \Tau_h}, (v_K)_{K \in \Tau_h}$. Then
\begin{align*}
\sum\limits_{K \in \Tau_h} \sum\limits_{\sigma\in\mathcal{E}_K^{\operatorname{int}}} \frac{m_{\sigma}}{d_{K|L}} (w_K - w_L)v_K = \sum\limits_{\sigma\in\mathcal{E}_{\operatorname{int}}} \frac{m_{\sigma}}{d_{K|L}} (w_K - w_L)(v_K - v_L).
\end{align*}
\end{rem}
\section{Regularity assumptions on the data and main result}
In order to prove our result, we need the following additional regularity assumptions on the data:
\begin{itemize}
    \item[$(R1)$] $~g \in \mathcal{C}^2(\mathbb{R})$ such that $g'$ and $g''$ are bounded on $\mathbb{R}$,
    \item[$(R2)$] $~u_0 \in L^2(\Omega;H^2(\Lambda))$ is $\mathcal{F}_0$-measurable and satisfies the weak homogeneous Neumann boundary condition.
\end{itemize}

\begin{thm}\label{main theorem}
Let $(R1)$ and $(R2)$ be satisfied and let $u$ be the variational solution of the stochastic heat equation \eqref{equation}
in the sense of Definition \ref{solution}. 
For $N \in \mathbb{N}$ and $h>0$, let $\Tau_h$ be an admissible mesh and $(u_h^n)_{n=1,...,N}$ the solution of the finite volume scheme \eqref{FVS}. Then, there exists a constant $\Upsilon>0$ depending on the mesh regularity $\operatorname{reg}(\Tau_h)$ but not depending on $n, N$ and $h$ explicitly such that
\begin{align*}
\sup_{t \in [0,T]} \mathbb{E} \Vert u(t) - u_{h,N}^r(t) \Vert_2^2 \leq \Upsilon(\tau + h^2 + \frac{h^2}{\tau}),
\end{align*}
where $u_{h,N}^r(t)=\sum\limits_{n=1}^N u_h^n \mathds{1}_{[t_{n-1}, t_n)}(t)$, $u_{h,N}^r(T)= u_h^N$. If \eqref{mesh_regularity} is satisfied, $\Upsilon$ may depend on $\chi$ and the dependence of $\Upsilon$ on $\operatorname{reg}(\Tau_h)$ can be omitted.
\end{thm}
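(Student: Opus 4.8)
The plan is to prove the estimate by a three-term triangle inequality, interpolating between the exact solution $u$ and the finite volume approximation $u_{h,N}^r$ through two intermediate objects described in the introduction: the solution $v^n$ of the semi-implicit (time-discrete, space-continuous) Euler scheme, with piecewise-constant-in-time reconstruction $v^r(t)=\sum_{n=1}^N v^n \mathds{1}_{[t_{n-1},t_n)}(t)$, and its parabolic projection $\Pi_h v^n$ onto the finite volume space (see Definition \ref{Definition parabolic projection}), with reconstruction $(\Pi_h v)^r$. Writing
\begin{align*}
\| u(t) - u_{h,N}^r(t)\|_2 \leq \| u(t) - v^r(t)\|_2 + \| v^r(t) - (\Pi_h v)^r(t)\|_2 + \| (\Pi_h v)^r(t) - u_{h,N}^r(t)\|_2,
\end{align*}
squaring, using $(a+b+c)^2 \leq 3(a^2+b^2+c^2)$, taking $\E$, and bounding the supremum of the sum by the sum of the suprema reduces the theorem to three separate error bounds of respective orders $\tau$, $h^2$ and $h^2+h^2\tau^{-1}$.

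First I would establish the temporal error $\sup_{t}\E\| u(t)-v^r(t)\|_2^2 \leq C\tau$. This is the genuinely stochastic part of the argument and I expect it to be the main obstacle. The strategy is to apply It\^o's formula to $\| u(t)-v(t)\|_2^2$ for a suitable interpolant of the Euler iterates, producing a deterministic dissipation term from the implicit Laplacian, a cross term, and a stochastic integral. The regularity result of Section 5 together with $(R2)$ is needed to control $\int_0^t \Delta u(s)\,ds$ and to bound the increments $u(s)-u(t_{n-1})$ at order $\tau^{1/2}$ in the relevant norm; assumption $(R1)$, i.e. smoothness and boundedness of $g'$ and $g''$, is what lets me handle the noise mismatch $g(u(s))\,dW(s)-g(v^{n-1})\,\Delta_n W$ through the It\^o isometry and a Taylor control of $g$, so that the martingale part has vanishing expectation while the quadratic variation contributes only at order $\tau$. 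A discrete Gronwall argument (Lemma \ref{Gronwall}) then closes the estimate.

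Next, for the spatial projection error $\E\| v^n-\Pi_h v^n\|_2^2 \leq C h^2$, I would use the defining elliptic property of $\Pi_h$ together with the $H^2$-regularity of the Euler iterates $v^n$, inherited from $(R2)$ and the smoothing of the implicit step via the \textit{a-priori} bounds of Section 4; this is essentially a deterministic two-point-flux interpolation estimate, uniform in $n$ after taking expectations. For the remaining error $\E\| (\Pi_h v)^r(t)-u_{h,N}^r(t)\|_2^2 \leq C(h^2+h^2\tau^{-1})$ I would subtract the scheme \eqref{FVS} from the projected Euler equation, test the difference with itself, apply the discrete integration by parts of Remark \ref{DIBP} and the discrete Poincar\'e inequality (Lemma \ref{Lemma PI}), and control both the flux consistency defect and the Lipschitz mismatch of the noise terms. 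The unfavorable factor arises precisely here: the parabolic projection is consistent with the scheme only up to a defect driven by the increment $v^n-v^{n-1}$, whose stochastic part $g(v^{n-1})\,\Delta_n W$ has size $\tau^{1/2}$ in $L^2(\Omega)$ and hence behaves like a discrete time derivative of order $\tau^{-1/2}$; multiplied by the $O(h)$ spatial consistency error of the flux this yields the $h\tau^{-1/2}$ contribution, i.e. $h^2\tau^{-1}$ after squaring, which cannot be removed because the stochastic forcing precludes any additional temporal smoothness.

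Collecting the three bounds and absorbing all constants, which depend on $\Lambda$, $T$, the Lipschitz and $\mathcal{C}^2$ bounds on $g$, the $H^2$-norm of $u_0$, and on $\reg$ (or on $\chi$ under \eqref{mesh_regularity}), into a single $\Upsilon>0$ yields
\begin{align*}
\sup_{t \in [0,T]} \E\| u(t)-u_{h,N}^r(t)\|_2^2 \leq \Upsilon\Big(\tau + h^2 + \frac{h^2}{\tau}\Big),
\end{align*}
as claimed.
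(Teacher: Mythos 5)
Your proposal follows essentially the same route as the paper: the same chain of intermediate objects (exact solution, semi-implicit Euler iterates, their elliptic projection, finite volume solution), the same key ingredients (time-continuity and Euler-error estimates via energy arguments with It\^o isometry and discrete Gronwall, $H^2$-stability of the Euler iterates, elliptic projection estimates, discrete integration by parts and discrete Poincar\'e), and the correct identification of the increment $v^n-v^{n-1}$ as the source of the $h^2\tau^{-1}$ term. The only cosmetic difference is that you group $u(t)-u(t_n)$ and $u(t_n)-v^n$ into a single term $u(t)-v^r(t)$, which the paper treats as two separate contributions.
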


\begin{rem}
The error estimate of Theorem \ref{main theorem} also applies to  
\begin{align*}
u_{h,N}^l(t)=\sum\limits_{n=1}^N u_h^{n-1} \mathds{1}_{[t_{n-1}, t_n)}(t), ~u_{h,N}(T)=u_h^N.
\end{align*}
It is only necessary to check that for any $t \in [0,t_1)$ we have
\begin{align*}
\mathbb{E} \Vert u(t) - u_h^0 \Vert_2^2 \leq \Upsilon(\tau + h^2 + \frac{h^2}{\tau}).
\end{align*}
Eventually, we obtain by applying Lemma \ref{Lemma 220923_01} and Lemma \ref{Lemma 240124_01}:
\begin{align*}
&\mathbb{E} \Vert u(t) - u_h^0 \Vert_2^2 \\
&\leq 2 \mathbb{E}\Vert u(t) -u_0 \Vert_2^2 + 2 \mathbb{E}\Vert u_0 - u_h^0 \Vert_2^2  \\
&\leq 2K_4 \tau +  2 \E \sum\limits_{K \in \mathcal{T}} \int_K |u_0(x) - \frac{1}{m_K} \int_K u_0(y) \, dy |^2 \, dx  \\
&=  2K_4 \tau +  2 \E \sum\limits_{K \in \mathcal{T}} \int_K \frac{1}{{m_K}^2} | \int_K u_0(x) -  u_0(y) \, dy |^2 \, dx \\
&\leq  2K_4 \tau+  2 \E \sum\limits_{K \in \mathcal{T}} \frac{1}{m_K} \int_K C(2, \Lambda) h^2 \Vert u_0 \Vert_{H^2(K)}^2 \, dx \\
&\leq 2K_4 \tau+ 2 C(2, \Lambda) \E \sum\limits_{K \in \mathcal{T}} h^2 \Vert u_0 \Vert_{H^2(K)}^2   \\
&= 2K_4 \tau+ 2 C(2, \Lambda) \E \Vert u_0 \Vert_{H^2(\Lambda)}^2 h^2.
\end{align*}
\end{rem}
\begin{rem}
		As already mentioned in \cite{MajPro18}, it is easily possible to generalize the error analysis of our scheme to a cylindrical Wiener process in $L^2(D)$. In this case, we consider a diffusion operator $G(u)$, where $G$ is a Hilbert-Schmidt operator on $L^2(D)$ such that, if $(e_k)_k$ is an orthonormal basis of $L^2(D)$, for any $k\in\mathbb{N}$, $G(u)(e_k)=g_k(u)$, where $g_k\in\mathcal{C}^2(\mathbb{R})$ satisfies
		\[\sum_{k=1}^{\infty}\Vert g'_k\Vert^2_{\infty}<+\infty, \quad  \sum_{k=1}^{\infty}\Vert g''_k\Vert^2_{\infty}<+\infty.\]
		Since the noise is discretized with respect to the time variable, we limit ourselves to the $1$-d case so as not to overload the notation.
\end{rem}
\section{Semi-implicit Euler scheme for the stochastic heat equation}
Now, for a $\mathcal{F}_0$-measurable random variable $v^0\in L^2(\Omega; L^2(\Lambda))$ we consider the semi-implicit Euler scheme
\begin{align}\label{ES}
\begin{cases} v^n - v^{n-1} - \tau \Delta v^n = g(v^{n-1}) \Delta_n W ~&\text{in}~ \Omega \times \Lambda ,\\
\nabla v^n \cdot \mathbf{n} =0~&\text{on}~\Omega \times \partial \Lambda,
\end{cases} \tag{ES}
\end{align}
where $0=t_0<t_1<...<t_N=T$ and $\tau=t_n - t_{n-1}=T/N$ for all $n \in \{1,...,N\}$.\\
From the Theorem of Stampaccia (see, e.g, \cite{BrezisFA}, Thm. 5.6) or arguing as in \cite{SWZ19} it follows that there exists a unique $(\mathcal{F}_{t_n})$-measurable solution $v^n$ in $L^2(\Omega;H^1(\Lambda))$ to \eqref{ES} in the weak sense.\\ 
Moreover, from Theorem 3.2.1.3 in \cite{Grisvard85} we obtain $v^n \in L^2(\Omega; H^2(\Lambda))$, hence \eqref{ES} holds a.e. in $\Omega\times \Lambda$ and the homogeneous Neumann boundary condition holds in the weak sense, i.e., for any $w \in H^1(\Lambda)$ we have
\begin{align*}
\int_{\partial \Lambda} w \nabla v^n \cdot \mathbf{n} \, dS=0,
\end{align*}
where the integrands $\nabla v^n$ and $w$ have to be understood in the sense of trace operators.\\
Furthermore, in the case $d=2$, Lemma 4.3.1.2, Lemma 4.3.1.3 and Theorem 4.3.1.4 in \cite{Grisvard85} yield the existence of a constant $C>0$ only depending on $\Lambda$ such that for any random variable $u : \Omega \to H^2(\Lambda)$ satisfying the weak homogeneous Neumann boundary conditions we have
\begin{align}\label{eq 041023_02}
\Vert u \Vert_{H^2(\Lambda)}^2 \leq C(\Vert \Delta u \Vert_2^2 + \Vert u \Vert_2^2) ~~~\mathbb{P}\text{-a.s. in}~\Omega.
\end{align}
Actually, inequality \eqref{eq 041023_02} still holds true for any $d \in \mathbb{N}$ and it can be chosen $C= 12$, see Appendix, Lemma \ref{Theorem 110124_01}.
\begin{lem}\label{Lemma 250923_01}
There exists a constant $K_1>0$ such that for any $N \in \mathbb{N}$ we have
\begin{align*}
\sup_{n \in \{1,...,N\}} \mathbb{E} \Vert v^n \Vert_2^2 + \sum\limits_{n=1}^N \mathbb{E} \Vert v^n - v^{n-1} \Vert_2^2\leq K_1.
\end{align*}
\end{lem}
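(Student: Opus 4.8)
The plan is to derive an energy estimate by testing the Euler scheme \eqref{ES} with $v^n$, to control the stochastic increment by exploiting the independence of the Brownian increments, and to close the argument with the discrete Gronwall inequality (Lemma \ref{Gronwall}). First I would take the $L^2(\Lambda)$-inner product of \eqref{ES} with $v^n$. Using the polarization identity
\[
\langle v^n - v^{n-1}, v^n \rangle = \tfrac12 \Vert v^n \Vert_2^2 - \tfrac12\Vert v^{n-1}\Vert_2^2 + \tfrac12\Vert v^n - v^{n-1}\Vert_2^2
\]
and integrating by parts (the boundary term vanishes thanks to the homogeneous Neumann condition, valid since $v^n \in H^2(\Lambda)$), one has $-\tau\langle \Delta v^n, v^n\rangle = \tau\Vert \nabla v^n\Vert_2^2 \geq 0$. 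This yields, before taking expectations,
\[
\tfrac12\Vert v^n\Vert_2^2 - \tfrac12\Vert v^{n-1}\Vert_2^2 + \tfrac12\Vert v^n - v^{n-1}\Vert_2^2 + \tau\Vert \nabla v^n\Vert_2^2 \leq \langle g(v^{n-1})\Delta_n W, v^n\rangle.
\]

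The main obstacle is the stochastic term on the right: since $v^n$ is $\mathcal{F}_{t_n}$-measurable and hence correlated with $\Delta_n W$, a direct expectation does not vanish. The crucial step is to split $v^n = v^{n-1} + (v^n - v^{n-1})$. For the first part, $\langle g(v^{n-1}), v^{n-1}\rangle$ is $\mathcal{F}_{t_{n-1}}$-measurable while $\Delta_n W$ is independent of $\mathcal{F}_{t_{n-1}}$ with zero mean, so $\mathbb{E}[\Delta_n W \langle g(v^{n-1}), v^{n-1}\rangle] = 0$. For the second part I would apply Young's inequality,
\[
\langle g(v^{n-1})\Delta_n W, v^n - v^{n-1}\rangle \leq \tfrac{1}{2\delta}\Vert g(v^{n-1})\Vert_2^2\, |\Delta_n W|^2 + \tfrac{\delta}{2}\Vert v^n - v^{n-1}\Vert_2^2,
\]
with some $\delta \in (0,1)$, so that the increment term is partially absorbed into the left-hand side. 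Taking expectations and using $\mathbb{E}[|\Delta_n W|^2] = \tau$, independence, and the growth bound $|g(r)|^2 \leq C_g(1+r^2)$ gives the discrete analogue of the It\^{o} isometry, namely $\mathbb{E}[\Vert g(v^{n-1})\Vert_2^2\, |\Delta_n W|^2] = \tau\,\mathbb{E}\Vert g(v^{n-1})\Vert_2^2 \leq C_g\tau(|\Lambda| + \mathbb{E}\Vert v^{n-1}\Vert_2^2)$.

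Collecting these estimates I arrive at
\[
\tfrac12\mathbb{E}\Vert v^n\Vert_2^2 - \tfrac12\mathbb{E}\Vert v^{n-1}\Vert_2^2 + \tfrac{1-\delta}{2}\mathbb{E}\Vert v^n - v^{n-1}\Vert_2^2 \leq \tfrac{C_g}{2\delta}\tau\big(|\Lambda| + \mathbb{E}\Vert v^{n-1}\Vert_2^2\big).
\]
Summing over $n$ and telescoping, $\mathbb{E}\Vert v^m\Vert_2^2$ is bounded by $\mathbb{E}\Vert v^0\Vert_2^2$, a constant multiple of $T$, and $\tfrac{C_g}{\delta}\tau\sum_{k=0}^{m-1}\mathbb{E}\Vert v^k\Vert_2^2$. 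Applying Lemma \ref{Gronwall} with $b_k = \tfrac{C_g}{\delta}\tau$ and using $\tau m \leq T$ yields a uniform bound on $\sup_{n}\mathbb{E}\Vert v^n\Vert_2^2$. Feeding this bound back into the telescoped inequality controls $\sum_{n=1}^N \mathbb{E}\Vert v^n - v^{n-1}\Vert_2^2$ as well, completing the proof with $K_1$ depending only on $T$, $|\Lambda|$, $C_g$ and $\mathbb{E}\Vert v^0\Vert_2^2$. I expect the only genuinely delicate point to be the splitting of the stochastic term and the absorption via $\delta \in (0,1)$, which is what keeps the increment sum on the left-hand side rather than forcing $\delta = 1$.
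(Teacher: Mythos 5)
Your proposal is correct and follows essentially the same route as the paper's proof: test \eqref{ES} with $v^n$, split the noise term and use the $\mathcal{F}_{t_{n-1}}$-measurability of $v^{n-1}$ so that the $(g(v^{n-1})\Delta_n W, v^{n-1})_2$ contribution vanishes in expectation, absorb the increment via Young's inequality together with $\mathbb{E}|\Delta_n W|^2=\tau$ and the growth bound on $g$, then sum and apply the discrete Gronwall lemma. The only cosmetic difference is that the paper fixes the Young parameter so the increment coefficient is $1/4$, while you keep a generic $\delta\in(0,1)$.
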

\begin{proof}
Using $v^n$ as a test function in the first equation of \eqref{ES} yields
\begin{align}\label{eq 250923_01}
I + II = III,
\end{align}
where
\begin{align*}
I&= \frac{1}{2} \bigg(\Vert v^n \Vert_2^2 - \Vert v^{n-1} \Vert_2^2 + \Vert v^n - v^{n-1} \Vert_2^2 \bigg) = (v^n - v^{n-1}, v^n)_2, \\
II&= \tau \Vert \nabla v^n \Vert_2^2, \\
III&= (g(v^{n-1}) \Delta_n W, v^n)_2.
\end{align*}
Taking the expectation in $III$, using the $\mathcal{F}_{t_{n-1}}$-measurability of $v^{n-1}$, applying Young's inequality and It\^{o} isometry (see, e.g., \cite{DPraZab}, p.101, equation (4.30)) yields
\begin{align*}
&|\mathbb{E} (III) |= |\mathbb{E} (g(v^{n-1} \Delta_n W, v^n-v^{n-1})_2| \leq \tau \mathbb{E} \Vert g(v^{n-1}) \Vert_2^2 + \frac{1}{4} \mathbb{E} \Vert v^n - v^{n-1} \Vert_2^2 \\
&\leq \tau C_g \bigg( |\Lambda| + \mathbb{E} \Vert v^{n-1} \Vert_2^2\bigg) + \frac{1}{4} \mathbb{E} \Vert v^n - v^{n-1} \Vert_2^2.
\end{align*}
Now, taking the expectation in \eqref{eq 250923_01} and taking the sum over $n=1,...,m$ for any $m \in \{1,...,N\}$ we obtain
\begin{align}\label{eq 041023_01}
&\frac{1}{2} \mathbb{E} \Vert v^m \Vert_2^2+ \frac{1}{4} \sum\limits_{n=1}^m \mathbb{E} \Vert v^n - v^{n-1} \Vert_2^2 + \tau \sum\limits_{n=1}^m \mathbb{E} \Vert \nabla v^n \Vert_2^2 \\
&\leq \frac{1}{2} \mathbb{E} \Vert v^0 \Vert_2^2 + T C_g |\Lambda| + \tau C_g \sum\limits_{n=1}^m \mathbb{E} \Vert v^{n-1} \Vert_2^2. \notag
\end{align}
Then, Lemma \ref{Gronwall} yields the existence of $\tilde{K}_1>0$ such that
\begin{align*}
\sup_{n \in \{1,...,N\}} \mathbb{E} \Vert v^n \Vert_2^2 \leq \tilde{K}_1.
\end{align*}
Now, using \eqref{eq 041023_01} with $m=N$ yields the assertion.
\end{proof}
\begin{lem}\label{Lemma 250923_02}
For $v_0 \in L^2(\Omega; H^1(\Lambda))$ $\mathcal{F}_0$-measurable and $r \in \N_0$, there exists a constant $\tilde{K}_2= \tilde{K}_2(r)>0$ such that for any $N \in \mathbb{N}$ we have
\begin{align}\label{eq 240312_1}
\begin{aligned}
\sup_{n \in \{1,...,N\}} &\erww{\Vert v^n \Vert_{H^1}^{2^{r+1}}}\\ 
&+ \sum\limits_{n=1}^N \mathbb{E}\left[\prod\limits_{l=1}^r \Big[\Vert v^n \Vert_{H^1}^{2^l} + \Vert v^{n-1} \Vert_{H^1}^{2^l} \Big] \times \Big(\Vert v^n - v^{n-1} \Vert_{H^1}^2 + \tau \Vert \Delta v^n \Vert_2^2 \Big)\right] \leq \tilde{K}_2.
\end{aligned}
\end{align}
Especially, there exists a constant $K_2>0$ such that
\begin{align*}
\sup_{n \in \{1,...,N\}} \erww{\Vert v^n \Vert_2^4 }+ \sup_{n \in \{1,...,N\}} \erww{ \Vert \nabla v^n \Vert_2^{10}} + \tau \sum\limits_{n=1}^N \erww{ \Vert \Delta v^n \Vert_2^2 \Vert \nabla v^n \Vert_2^2} \leq K_2.
\end{align*}

\end{lem}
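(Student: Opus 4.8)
The plan is to derive a discrete energy identity in $H^1$, multiply it by a carefully chosen nonnegative weight that telescopes the $2^{r+1}$-th power, and then close the resulting recursion with the discrete Gronwall inequality (Lemma \ref{Gronwall}); the ``especially'' part will then follow by specializing $r$ and using Jensen's inequality. First I would test the first line of \eqref{ES} simultaneously with $v^n$ and with $-\Delta v^n$. Integrating by parts and using that the boundary terms vanish thanks to the weak homogeneous Neumann condition for $v^n$, together with the polarization identity $(a-b,a)_2=\tfrac12(\Vert a\Vert_2^2-\Vert b\Vert_2^2+\Vert a-b\Vert_2^2)$, I obtain
\begin{align*}
\Vert v^n\Vert_{H^1}^2-\Vert v^{n-1}\Vert_{H^1}^2+\Vert v^n-v^{n-1}\Vert_{H^1}^2+2\tau\big(\Vert\nabla v^n\Vert_2^2+\Vert\Delta v^n\Vert_2^2\big)=2M_n,
\end{align*}
where $M_n:=\Delta_nW\big[(g(v^{n-1}),v^n)_2+(g'(v^{n-1})\nabla v^{n-1},\nabla v^n)_2\big]$, the factor $g'(v^{n-1})$ (bounded by $(R1)$) arising from the integration by parts in the stochastic term.

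Next I would exploit the elementary factorization $a^{2^{r+1}}-b^{2^{r+1}}=(a^2-b^2)\prod_{l=1}^r(a^{2^l}+b^{2^l})$ with $a=\Vert v^n\Vert_{H^1}$ and $b=\Vert v^{n-1}\Vert_{H^1}$. Multiplying the energy identity by the nonnegative, $\mathcal F_{t_n}$-measurable weight $P_n:=\prod_{l=1}^r(\Vert v^n\Vert_{H^1}^{2^l}+\Vert v^{n-1}\Vert_{H^1}^{2^l})$ therefore converts the leading difference into $\Vert v^n\Vert_{H^1}^{2^{r+1}}-\Vert v^{n-1}\Vert_{H^1}^{2^{r+1}}$, while the dissipative group $D_n:=\Vert v^n-v^{n-1}\Vert_{H^1}^2+2\tau(\Vert\nabla v^n\Vert_2^2+\Vert\Delta v^n\Vert_2^2)$ retains its nonnegative weight $P_n$ and dominates the summand appearing in \eqref{eq 240312_1}. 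Taking expectations, summing over $n=1,\dots,m$, telescoping the leading difference and keeping the nonnegative term $\E[D_nP_n]$ on the left, I find that for every $m$ both $\E\Vert v^m\Vert_{H^1}^{2^{r+1}}$ and $\sum_{n=1}^m\E[D_nP_n]$ are controlled by $\E\Vert v^0\Vert_{H^1}^{2^{r+1}}+\sum_{n=1}^m\E[2M_nP_n]$, so everything reduces to an estimate for the noise sum.

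The crux is to show that $\E[2M_nP_n]\le C\tau\big(1+\E\Vert v^{n-1}\Vert_{H^1}^{2^{r+1}}\big)$, and this is where the martingale structure must be used with care, since $M_n$ and $P_n$ both depend on $\Delta_nW$ through $v^n$. The key observation is that, conditionally on $\mathcal F_{t_{n-1}}$, the scheme \eqref{ES} makes $v^n$ an \emph{affine} function of the scalar increment $\xi:=\Delta_nW$, namely $v^n=(I-\tau\Delta)^{-1}v^{n-1}+\xi\,(I-\tau\Delta)^{-1}g(v^{n-1})=:A+\xi B$, with $A,B$ being $\mathcal F_{t_{n-1}}$-measurable. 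Consequently $M_n=\alpha\xi+\beta\xi^2$ and $P_n$ is a polynomial in $\xi$ of degree $2^{r+1}-2$, all with $\mathcal F_{t_{n-1}}$-measurable coefficients that are controlled, via the $L^2$- and $H^1$-contractivity of the resolvent $(I-\tau\Delta)^{-1}$ and the bounds on $g$ and $g'$ from $(R1)$, by powers of $\Vert v^{n-1}\Vert_{H^1}$. Since $\xi$ is centered, independent of $\mathcal F_{t_{n-1}}$, and $\E[\xi^{2k}]=(2k-1)!!\,\tau^k$, the only terms surviving $\E[\,\cdot\,|\mathcal F_{t_{n-1}}]$ carry at least one factor $\tau$, and a homogeneity count shows every surviving contribution is bounded by $C\tau(1+\Vert v^{n-1}\Vert_{H^1}^{2^{r+1}})$ (lower powers being absorbed by Young's inequality). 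I expect this bookkeeping — verifying that each contribution of the new information $\xi$ comes with a compensating power of $\tau$ and never exceeds homogeneity $2^{r+1}$ — to be the main technical obstacle. Summing over $n$, discarding the nonnegative weighted dissipation, and applying Lemma \ref{Gronwall} to $a_m=\E\Vert v^m\Vert_{H^1}^{2^{r+1}}$ then yields the supremum bound; reinserting it into the summed identity bounds the sum term as well, proving \eqref{eq 240312_1} with $\tilde K_2(r)$ depending in addition on $\E\Vert v^0\Vert_{H^1}^{2^{r+1}}$.

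Finally, the ``especially'' part follows by specialization. The bound $\sup_n\erww{\Vert v^n\Vert_2^4}\le\sup_n\erww{\Vert v^n\Vert_{H^1}^4}$ and the bound on $\tau\sum_n\erww{\Vert\Delta v^n\Vert_2^2\Vert\nabla v^n\Vert_2^2}$ both follow from \eqref{eq 240312_1} with $r=1$: for the latter, keep only the summand $\tau\Vert\Delta v^n\Vert_2^2$ in $D_n$ and the factor $\Vert v^n\Vert_{H^1}^2\ge\Vert\nabla v^n\Vert_2^2$ in $P_n$. The bound $\sup_n\erww{\Vert\nabla v^n\Vert_2^{10}}\le\sup_n\erww{\Vert v^n\Vert_{H^1}^{10}}\le\big(\sup_n\erww{\Vert v^n\Vert_{H^1}^{16}}\big)^{5/8}$ follows from \eqref{eq 240312_1} with $r=3$ together with Jensen's inequality on $(\Omega,\mathcal A,\mathds P)$. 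Taking $K_2$ as the maximum of the finitely many constants involved completes the argument.
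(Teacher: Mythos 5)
Your proposal is correct and reaches the same recursion (an estimate of the form $\E\Vert v^n\Vert_{H^1}^{2^{r+1}}-\E\Vert v^{n-1}\Vert_{H^1}^{2^{r+1}}+\E[\text{weighted dissipation}]\leq C\tau(1+\E\Vert v^{n-1}\Vert_{H^1}^{2^{r+1}})$, closed by Lemma \ref{Gronwall}), but the route to it differs from the paper's in the key technical step. The paper proceeds by induction over $r$: it first establishes the base case $r=0$ by testing with $v^n-\Delta v^n$ and splitting the noise term into a part paired with $v^n-v^{n-1}$ (absorbed via Young's inequality into $|\Delta_nW|^{2}\Vert g(v^{n-1})\Vert_{H^1}^2$ plus a fraction of the dissipation) and a genuine martingale increment $f^r_{n-1}\Delta_nW$ with $\mathcal F_{t_{n-1}}$-measurable $f^r_{n-1}$; the inductive step then multiplies the level-$r$ inequality by $\Vert v^n\Vert_{H^1}^{2^{r+1}}+\tfrac12\Vert v^{n-1}\Vert_{H^1}^{2^{r+1}}$ and repeatedly re-splits $\Vert v^n\Vert_{H^1}^{2^{r+1}}$ as its increment plus its $\mathcal F_{t_{n-1}}$-measurable part, absorbing the squared increment on the left. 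You instead keep the exact $H^1$ energy identity, multiply once by the full telescoping weight $P_n=\prod_{l=1}^r(\Vert v^n\Vert_{H^1}^{2^l}+\Vert v^{n-1}\Vert_{H^1}^{2^l})$ via the factorization $a^{2^{r+1}}-b^{2^{r+1}}=(a^2-b^2)\prod_{l=1}^r(a^{2^l}+b^{2^l})$, and then estimate $\E[M_nP_n]$ directly by observing that, conditionally on $\mathcal F_{t_{n-1}}$, $v^n=(I-\tau\Delta)^{-1}v^{n-1}+\Delta_nW\,(I-\tau\Delta)^{-1}g(v^{n-1})$ is affine in the Gaussian increment, so $M_nP_n$ is a polynomial in $\Delta_nW$ of degree $2^{r+1}$ whose coefficients are controlled by $1+\Vert v^{n-1}\Vert_{H^1}^{2^{r+1}}$ through the $L^2$- and $H^1$-contractivity of the resolvent and $(R1)$; centering and $\E|\Delta_nW|^{2k}=(2k-1)!!\,\tau^{k}$ then give the needed factor $\tau$. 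Your homogeneity count checks out, and it delivers the product weight of the statement in one stroke rather than with the paper's $\tfrac12$-factors; the specializations $r=1$ (for $\Vert v^n\Vert_2^4$ and $\tau\sum_n\E\Vert\Delta v^n\Vert_2^2\Vert\nabla v^n\Vert_2^2$) and $r=3$ with Jensen (for $\Vert\nabla v^n\Vert_2^{10}$) are exactly what the paper's ``especially'' part requires. What each approach buys: yours is algebraically cleaner and avoids the induction, but leans on the affine resolvent representation, hence on the linearity of the drift and the explicit treatment of the noise; the paper's Young-plus-martingale splitting and induction is more robust and would survive a nonlinear monotone drift. The only soft spot in your write-up is that the ``bookkeeping'' paragraph is left schematic, but the claim it is meant to establish, $\E[2M_nP_n]\leq C\tau(1+\E\Vert v^{n-1}\Vert_{H^1}^{2^{r+1}})$, is correct and verifiable along the lines you indicate.
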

\begin{proof}
The proof is similar to the proof of Lemma 4.1 in \cite{MajPro18}, however, for the sake of completeness, we want to give the whole proof by induction.\\
We firstly claim the following: For any $r \in \N_0$ and any $n=1,\ldots,N$ there exists $K(r)>0$ and a $(\mathcal{F}_{t_{n-1}})$-measurable real-valued random variable $f_{n-1}^r$ with $|f_{n-1}^r|^2 \leq \tilde{C}(r)(1+ \Vert v^{n-1} \Vert_{H^1}^{2^{r+2}})$ for some constant $\tilde{C}(r)$ only depending on $r$ such that
\begin{align}\label{eq 020224_01}
 & \Big[\Vert v^n \Vert_{H^1}^{2^{r+1}} - \Vert v^{n-1} \Vert_{H^1}^{2^{r+1}}\Big] +  \bigg[\prod\limits_{l=1}^r \Big[\Vert v^n \Vert_{H^1}^{2^l} + \frac{1}{2} \Vert v^{n-1} \Vert_{H^1}^{2^l} \Big] \times \Big( \frac{1}{2} \Vert v^n - v^{n-1} \Vert_{H^1}^2 + \tau \Vert \Delta v^n \Vert_2^2 \Big)\bigg] \notag \\
&\leq C(r) \sum\limits_{l=1}^{r+1}|\Delta_n W|^{2^l} (1+ \Vert v^{n-1} \Vert_{H^1}^{2^{r+1}}) +f_{n-1}^r \Delta_n W
\end{align}
$\mathbb{P}$-a.s. in $\Omega$. We prove \eqref{eq 020224_01} for any $r \in \N_0$ and any $n=1,\ldots,N$ by induction over $r$. For $r=0$ we have to prove
\begin{align*}
 & \Vert v^n \Vert_{H^1}^2 - \Vert v^{n-1} \Vert_{H^1}^2 +  \frac{1}{2} \Vert v^n - v^{n-1} \Vert_{H^1}^2 + \tau \Vert \Delta v^n \Vert_2^2 \\
&\leq C(0) |\Delta_n W|^2 (1+ \Vert v^{n-1} \Vert_{H^1}^2) +f_{n-1}^0 \Delta_n W
\end{align*}
$\mathbb{P}$-a.s. in $\Omega$, where we use the convention $\prod\limits_{l=1}^0=1$. Applying $v^n - \Delta v^n$ to \eqref{ES} yields
\begin{align*}
\frac{1}{2} &\Big[\Vert v^n \Vert_{H^1}^2 - \Vert v^{n-1} \Vert_{H^1}^2+ \Vert v^n - v^{n-1} \Vert_{H^1}^2 \Big] + \tau \Big( \Vert \nabla v^n \Vert_2^2 + \Vert \Delta v^n \Vert_2^2 \Big) \\
&=(g(v^{n-1}) \Delta_n W,  v^n)_2 + (\nabla g(v^{n-1}) \Delta_n W, \nabla v^n)_2 \\
&=(g(v^{n-1}) \Delta_n W,  v^n- v^{n-1})_2 + (\nabla g(v^{n-1}) \Delta_n W, \nabla v^n - \nabla v^{n-1})_2 \\
&+(g(v^{n-1}) \Delta_n W,  v^{n-1})_2 + (\nabla g(v^{n-1}) \Delta_n W, \nabla v^{n-1})_2 \\
&\leq \Vert g(v^{n-1}) \Vert_2^2 |\Delta_n W|^2 + \frac{1}{4} \Vert v^n - v^{n-1} \Vert_2^2 + \Vert \nabla g(v^{n-1}) \Vert_2^2 |\Delta_n W|^2 + \frac{1}{4} \Vert \nabla v^n - \nabla v^{n-1} \Vert_2^2 \\
&+ \bigg( \int_{\Lambda} g(v^{n-1}) v^{n-1} + g'(v^{n-1}) |\nabla v^{n-1}|^2 \, dx \bigg) \Delta_n W \\
&\leq \frac{1}{2}C(0) |\Delta_n W|^2 (1+ \Vert v^{n-1} \Vert_{H^1}^2) + \frac{1}{4} \Vert v^n - v^{n-1} \Vert_{H^1}^2 + \frac{1}{2} f_{n-1}^0 \Delta_n W
\end{align*}
for some constant $C(0)>0$ only depending on $C_g,\Vert g' \Vert_{\infty}$, $\Lambda$ and $f_{n-1}^0 := 2\big( \int_{\Lambda} g(v^{n-1}) v^{n-1} + g'(v^{n-1}) |\nabla v^{n-1}|^2 \, dx \big)$. Subtracting $\frac{1}{4} \Vert v^n - v^{n-1} \Vert_{H^1}^2$, multiplying by $2$ and discarding nonnegative terms on the left-hand side yields \eqref{eq 020224_01} for $r=0$.\\
Now, we assume \eqref{eq 020224_01} holds true for some $r \in \N_0$ and we show that \eqref{eq 020224_01} holds true for $r+1$. Multiplying \eqref{eq 020224_01} with $(\Vert v^n \Vert_{H^1}^{2^{r+1}} + \frac{1}{2} \Vert v^{n-1} \Vert_{H^1}^{2^{r+1}} )$ yields
\begin{align*}
&\frac{3}{4} \bigg(\Vert  v^n \Vert_{H^1}^{2^{r+2}} - \Vert v^{n-1} \Vert_{H^1}^{2^{r+2}}\bigg) + \frac{1}{4} \bigg(\Vert  v^n \Vert_{H^1}^{2^{r+1}} - \Vert  v^{n-1} \Vert_{H^1}^{2^{r+1}}\bigg)^2 \\
&+  \bigg[\prod\limits_{l=1}^{r+1} \Big[\Vert v^n \Vert_{H^1}^{2^l} + \frac{1}{2} \Vert v^{n-1} \Vert_{H^1}^{2^l} \Big] \times \Big( \frac{1}{2} \Vert v^n - v^{n-1} \Vert_{H^1}^2 + \tau \Vert \Delta v^n \Vert_2^2 \Big)\bigg] \notag \\
&\leq \bigg(C(r) \sum\limits_{l=1}^{r+1}|\Delta_n W|^{2^l} (1+ \Vert v^{n-1} \Vert_{H^1}^{2^{r+1}})\bigg) (\Vert v^n \Vert_{H^1}^{2^{r+1}} + \frac{1}{2} \Vert v^{n-1} \Vert_{H^1}^{2^{r+1}} ) \\
&+\bigg( f_{n-1}^r \Delta_n W\bigg) (\Vert v^n \Vert_{H^1}^{2^{r+1}} + \frac{1}{2} \Vert v^{n-1} \Vert_{H^1}^{2^{r+1}} ) \\
&\leq 2C(r)^2 \bigg(\sum\limits_{l=1}^{r+1}|\Delta_n W|^{2^l}\bigg)^2 (1+ \Vert v^{n-1} \Vert_{H^1}^{2^{r+1}})^2 + \frac{1}{8} \bigg(\Vert  v^n \Vert_{H^1}^{2^{r+1}} - \Vert  v^{n-1} \Vert_{H^1}^{2^{r+1}}\bigg)^2 \\
&+ \frac{3}{2} C(r) \bigg(\sum\limits_{l=1}^{r+1}|\Delta_n W|^{2^l}\bigg)(1+ \Vert v^{n-1} \Vert_{H^1}^{2^{r+1}})\Vert v^{n-1} \Vert_{H^1}^{2^{r+1}} \\
&+ 2|f_{n-1}^r|^2 |\Delta_n W|^2 + \frac{1}{8} \bigg(\Vert  v^n \Vert_{H^1}^{2^{r+1}} - \Vert  v^{n-1} \Vert_{H^1}^{2^{r+1}}\bigg)^2 + \frac{3}{2} f_{n-1}^r \Delta_n W \Vert v^{n-1} \Vert_{H^1}^{2^{r+1}} \\
&\leq 4C(r)^2(r+1)^2 \bigg(\sum\limits_{l=1}^{r+1}|\Delta_n W|^{2^{l+1}}\bigg)(1+ \Vert v^{n-1} \Vert_{H^1}^{2^{r+2}}) +  \frac{1}{4} \bigg(\Vert  v^n \Vert_{H^1}^{2^{r+1}} - \Vert  v^{n-1} \Vert_{H^1}^{2^{r+1}}\bigg)^2 \\
&+ 3C(r) \bigg(\sum\limits_{l=1}^{r+1}|\Delta_n W|^{2^l}\bigg)(1+ \Vert v^{n-1} \Vert_{H^1}^{2^{r+2}}) + 2\tilde{C}(r)(1+ \Vert v^{n-1} \Vert_{H^1}^{2^{r+2}})|\Delta_n W|^2 + \frac{3}{2} f_{n-1}^r \Vert  v^{n-1} \Vert_{H^1}^{2^{r+1}} \Delta_n W \\
&\leq \big(4(r+1)^2C(r)^2 + 3C(r) + 2\tilde{C}(r)\big)\bigg(\sum\limits_{l=1}^{r+2}|\Delta_n W|^{2^l}\bigg)(1+ \Vert v^{n-1} \Vert_{H^1}^{2^{r+2}}) \\
&+ \frac{3}{4} f_{n-1}^{r+1} \Delta_n W + \frac{1}{4} \bigg(\Vert  v^n \Vert_{H^1}^{2^{r+1}} - \Vert  v^{n-1} \Vert_{H^1}^{2^{r+1}}\bigg)^2
\end{align*}
$\mathbb{P}$-a.s. in $\Omega$, where $f_{n-1}^{r+1}:= 2f_{n-1}^r \Vert v^{n-1} \Vert_{H^1}^{2^{r+1}}$. 
Hence, subtracting $\frac{1}{4} \bigg(\Vert  v^n \Vert_{H^1}^{2^{r+1}} - \Vert  v^{n-1} \Vert_{H^1}^{2^{r+1}}\bigg)^2$ yields
\begin{align*}
&\frac{3}{4} \bigg(\Vert  v^n \Vert_{H^1}^{2^{r+2}} - \Vert v^{n-1} \Vert_{H^1}^{2^{r+2}}\bigg) \\
&+  \bigg[\prod\limits_{l=1}^{r+1} \Big[\Vert v^n \Vert_{H^1}^{2^l} + \frac{1}{2} \Vert v^{n-1} \Vert_{H^1}^{2^l} \Big] \times \Big( \frac{1}{2} \Vert v^n - v^{n-1} \Vert_{H^1}^2 + \tau \Vert \Delta v^n \Vert_2^2 \Big)\bigg] \notag \\
&\leq \frac{3}{4} C(r+1) \bigg(\sum\limits_{l=1}^{r+2}|\Delta_n W|^{2^l}\bigg)(1+ \Vert v^{n-1} \Vert_{H^1}^{2^{r+2}}) + \frac{3}{4} f_{n-1}^{r+1} \Delta_n W
\end{align*}
$\mathbb{P}$-a.s. in $\Omega$, where $C(r+1):= \frac{4}{3} \big(4(r+1)C(r)^2 + 3C(r) + 2\tilde{C}(r)\big)$. Multiplying with $\frac{4}{3}$ and discarding nonnegative terms on the left-hand side yields \eqref{eq 020224_01} for $r+1$.\\
Now, we have $\E \bigg(\sum\limits_{l=1}^{r+1}|\Delta_n W|^{2^l}\bigg)= \sum\limits_{l=1}^{r+1} \tau^{2^{l-1}} (2l-1)!! \leq \tilde{C}(T, r) \tau$ for some constant $\tilde{C}(T, r)>0$, where
\begin{align*}
k!!=
\begin{cases}
k \cdot (k-2) \cdot ... \cdot 2, &k \in \N ~\text{even}, \\
k \cdot (k-2) \cdot ... \cdot 1, &k \in \N ~\text{odd}
\end{cases}
\end{align*}
and $\E (f_{n-1}^r \Delta_n W) = (\E f_{n-1}^r)(\E \Delta_n W)=0$. Therefore, applying the expectation in \eqref{eq 020224_01} and setting $\overline{C}(T,r):= C(r) \tilde{C}(T,r)$ yields
\begin{align*}
&\E \Vert v^n \Vert_{H^1}^{2^{r+1}} - \E \Vert v^{n-1} \Vert_{H^1}^{2^{r+1}} \\
&+ \E \bigg[\prod\limits_{l=1}^r \Big[\Vert v^n \Vert_{H^1}^{2^l} + \frac{1}{2} \Vert v^{n-1} \Vert_{H^1}^{2^l} \Big] \times \Big( \frac{1}{2} \Vert v^n - v^{n-1} \Vert_{H^1}^2 + \tau \Vert \Delta v^n \Vert_2^2 \Big)\bigg] \\
&\leq \overline{C}(T,r) \tau + \overline{C}(T,r) \tau \E \Vert v^{n-1} \Vert_{H^1}^{2^{r+1}}
\end{align*}
for all $r \in \N_0$. Now summing over $n=1,\ldots, m$ for arbitrary $m\in \{1,\ldots,N\}$, applying Lemma \ref{Gronwall} and taking the supremum yields \eqref{eq 240312_1}. 
\end{proof}
\begin{rem}
Let us remark that $\Delta v^n = (1/\tau) (v^n - v^{n-1} - g(v^{n-1}) \Delta_nW)$ is $(\mathcal{F}_{t_n})$-measurable with values in $H^1(\Lambda)$.
\end{rem}
\begin{lem}\label{Lemma 250923_03}
Let $v^0 \in L^2(\Omega; H^2(\Lambda))$ be $\mathcal{F}_0$-measurable. Then there exists a constant $K_3>0$ such that for any $N \in \mathbb{N}$ we have
\begin{align*}
\sup_{n \in \{1,...,N\}} \mathbb{E} \Vert \Delta v^n \Vert_2^2 + \sum\limits_{n=1}^N \mathbb{E} \Vert \Delta(v^n - v^{n-1}) \Vert_2^2 + \tau \sum\limits_{n=1}^N \mathbb{E} \Vert \nabla \Delta v^n \Vert_2^2 \leq K_3.
\end{align*}
\end{lem}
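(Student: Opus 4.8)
The plan is to run a discrete $H^2$-energy estimate for \eqref{ES}: the three quantities in the assertion are precisely what one obtains after ``applying $\Delta$ twice'' to the scheme. The key preliminary observation is a gain of regularity for $\Delta v^n$. By the $H^2$-regularity of the Euler scheme recalled above, $v^{n-1},v^n\in H^2(\Lambda)$ satisfy the weak homogeneous Neumann condition, i.e.\ they belong to the domain $D(\Delta)$ of the Neumann Laplacian. Since $g\in\mathcal{C}^2(\R)$ has bounded derivatives by $(R1)$ and $d\le 3$ yields $H^2(\Lambda)\hookrightarrow W^{1,4}(\Lambda)$, the chain rule $\Delta g(v^{n-1})=g'(v^{n-1})\Delta v^{n-1}+g''(v^{n-1})|\nabla v^{n-1}|^2$ gives $g(v^{n-1})\in H^2(\Lambda)$, and $\nabla g(v^{n-1})\cdot\mathbf{n}=g'(v^{n-1})\nabla v^{n-1}\cdot\mathbf{n}=0$, so $g(v^{n-1})\in D(\Delta)$. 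Rewriting \eqref{ES} as $\tau\Delta v^n=v^n-v^{n-1}-g(v^{n-1})\Delta_nW$ then exhibits $\Delta v^n$ itself as an element of $D(\Delta)$, $\mathbb{P}$-a.s.; in particular $\Delta^2 v^n\in L^2(\Lambda)$ and $\nabla\Delta v^n\cdot\mathbf{n}=0$, which is exactly the regularity needed below.

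Next I would test the strong form of the scheme against $\Delta^2 v^n$ and integrate by parts twice over $\Lambda$. Both boundary terms vanish, one thanks to $\nabla(v^n-v^{n-1})\cdot\mathbf{n}=0$ and the other thanks to $\nabla\Delta v^n\cdot\mathbf{n}=0$, and the identity $(a-b,a)_2=\tfrac12(\Vert a\Vert_2^2-\Vert b\Vert_2^2+\Vert a-b\Vert_2^2)$ produces the per-step relation
\begin{align*}
\tfrac12\big(\Vert\Delta v^n\Vert_2^2-\Vert\Delta v^{n-1}\Vert_2^2+\Vert\Delta(v^n-v^{n-1})\Vert_2^2\big)+\tau\Vert\nabla\Delta v^n\Vert_2^2=(\Delta g(v^{n-1}),\Delta v^n)_2\,\Delta_nW
\end{align*}
$\mathbb{P}$-a.s. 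Taking expectations and writing $\Delta v^n=\Delta v^{n-1}+\Delta(v^n-v^{n-1})$ on the right, the $\mathcal{F}_{t_{n-1}}$-measurable part $(\Delta g(v^{n-1}),\Delta v^{n-1})_2\Delta_nW$ has zero mean, while Young's inequality together with the independence of $|\Delta_nW|^2$ from $\mathcal{F}_{t_{n-1}}$ bounds the remainder by $\tfrac14\mathbb{E}\Vert\Delta(v^n-v^{n-1})\Vert_2^2+\tau\,\mathbb{E}\Vert\Delta g(v^{n-1})\Vert_2^2$. Summing over $n=1,\dots,m$ and telescoping gives
\begin{align*}
&\tfrac12\mathbb{E}\Vert\Delta v^m\Vert_2^2+\tfrac14\sum_{n=1}^m\mathbb{E}\Vert\Delta(v^n-v^{n-1})\Vert_2^2+\tau\sum_{n=1}^m\mathbb{E}\Vert\nabla\Delta v^n\Vert_2^2\\
&\qquad\le\tfrac12\mathbb{E}\Vert\Delta v^0\Vert_2^2+\tau\sum_{n=1}^m\mathbb{E}\Vert\Delta g(v^{n-1})\Vert_2^2 .
\end{align*}
Because $v^0\in L^2(\Omega;H^2(\Lambda))$, the whole statement reduces to a uniform bound on $\tau\sum_n\mathbb{E}\Vert\Delta g(v^{n-1})\Vert_2^2$; notably no discrete Gronwall argument is required, since the relevant a priori sums are already finite by Lemma \ref{Lemma 250923_02}.

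The heart of the proof, and the step I expect to be most delicate, is this last bound. Using the chain rule, $\Vert\Delta g(v^{n-1})\Vert_2^2\le 2\Vert g'\Vert_\infty^2\Vert\Delta v^{n-1}\Vert_2^2+2\Vert g''\Vert_\infty^2\Vert\nabla v^{n-1}\Vert_{L^4}^4$, and $\tau\sum_n\mathbb{E}\Vert\Delta v^n\Vert_2^2\le C$ is the case $r=0$ of \eqref{eq 240312_1}. The nonlinear term $\tau\sum_n\mathbb{E}\Vert\nabla v^{n-1}\Vert_{L^4}^4$ is treated by Gagliardo--Nirenberg together with \eqref{eq 041023_02}. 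In $d=2$ one has $\Vert\nabla v\Vert_{L^4}^4\le C(\Vert\Delta v\Vert_2^2+\Vert v\Vert_2^2)\Vert\nabla v\Vert_2^2$, so the expectation is controlled directly by the mixed bound $\tau\sum_n\mathbb{E}[\Vert\Delta v^n\Vert_2^2\Vert\nabla v^n\Vert_2^2]\le K_2$ of Lemma \ref{Lemma 250923_02} and the finite moments of $\Vert v^n\Vert_2,\Vert\nabla v^n\Vert_2$. The genuinely delicate case is $d=3$, where the sharp embedding costs an extra half derivative; here I would instead interpolate one order higher, using $\Delta v^{n-1}\in H^1(\Lambda)$, to obtain an inequality of the form $\Vert\nabla v\Vert_{L^4}^4\le C\Vert v\Vert_{H^1}^{5/2}\Vert\nabla\Delta v\Vert_2^{3/2}$ (plus lower-order terms), and then Young's inequality with the subquadratic power $3/2<2$ to write $\Vert\nabla v\Vert_{L^4}^4\le\varepsilon\Vert\nabla\Delta v\Vert_2^2+C_\varepsilon\Vert v\Vert_{H^1}^{10}$. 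Choosing $\varepsilon$ small absorbs $\varepsilon\tau\sum_n\mathbb{E}\Vert\nabla\Delta v^{n-1}\Vert_2^2$ into the dissipative term on the left-hand side (the isolated $n=1$ contribution being handled separately via $H^2(\Lambda)\hookrightarrow W^{1,4}(\Lambda)$ and the integrability of $v^0$), whereas $\tau\sum_n\mathbb{E}\Vert v^{n-1}\Vert_{H^1}^{10}\le T\sup_n\mathbb{E}\Vert v^n\Vert_{H^1}^{10}$ is finite by \eqref{eq 240312_1}. It is precisely this tenth-power moment bound, tailored in Lemma \ref{Lemma 250923_02}, that makes the three-dimensional argument close; collecting all estimates then yields the uniform constant $K_3$.
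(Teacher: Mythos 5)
Your argument is correct and follows essentially the same route as the paper: the same per-step $H^2$-energy identity (the paper obtains it by applying $\nabla$ to \eqref{ES} and testing with $-\nabla\Delta v^n$, which is your computation after one integration by parts), the same zero-mean/Young/It\^{o} treatment of the noise term, and the same Gagliardo--Nirenberg bounds on $\Vert \Delta g(v^{n-1})\Vert_2^2$, including in $d=3$ the Young absorption of $\Vert\nabla\Delta v^{n-1}\Vert_2^{3/2}$ into the dissipative term against the tenth-power moment supplied by Lemma \ref{Lemma 250923_02}. The only cosmetic differences are your explicit bootstrap showing $\Delta v^n\in D(\Delta)$ (left implicit in the paper) and your remark that the discrete Gronwall lemma can be bypassed because $\tau\sum_n\mathbb{E}\Vert\Delta v^n\Vert_2^2$ is already controlled by the $r=0$ case of \eqref{eq 240312_1}, whereas the paper closes the estimate with Lemma \ref{Gronwall}.
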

\begin{proof}
Applying $\nabla$ to \eqref{ES} and testing with $-\nabla \Delta v^n$ yields
\begin{align*}
&\frac{1}{2} \bigg( \Vert \Delta v^n \Vert_2^2 - \Vert \Delta v^{n-1} \Vert_2^2 + \Vert \Delta v^n - \Delta v^{n-1} \Vert_2^2 \bigg) + \tau \Vert \nabla \Delta v^n \Vert_2^2 \\
&= (\Delta g(v^{n-1}) \Delta_n W, \Delta v^n)_2.
\end{align*}
Taking the expectation, using the $\mathcal{F}_{t_{n-1}}$-measurability of $\Delta v^{n-1}$, applying Young's inequality and It\^{o} isometry yields
\begin{align}\label{eq 250923_06}
&\frac{1}{2} \mathbb{E}\bigg( \Vert \Delta v^n \Vert_2^2 - \Vert \Delta v^{n-1} \Vert_2^2 + \Vert \Delta v^n - \Delta v^{n-1} \Vert_2^2 \bigg) + \tau \mathbb{E}\Vert \nabla \Delta v^n \Vert_2^2 \notag \\
&= \mathbb{E}(\Delta g(v^{n-1}) \Delta_n W, \Delta (v^n- v^{n-1}))_2 \\
&\leq \frac{1}{4} \mathbb{E} \Vert \Delta v^n - \Delta v^{n-1} \Vert_2^2 + \tau \mathbb{E} \Vert \Delta g(v^{n-1})\Vert_2^2. \notag
\end{align}
Gagliardo-Nirenberg's inequality, Lemma \ref{Theorem 110124_01} and the boundedness of $g'$ and $g''$ yield the existence of constants $C_1,C_2,C_3,C_4>0$ such that
\begin{align}\label{eq 250923_07} 
\begin{aligned}
&\Vert \Delta g(v^{n-1})\Vert_2^2\\
&\leq \begin{cases} C_1 \Vert \Delta v^{n-1} \Vert_2^2  + C_2 \Vert \Delta v^{n-1} \Vert_2^2 \Vert \nabla v^{n-1} \Vert_2^2 + C_3 \Vert \nabla v^{n-1} \Vert_2^4 + C_4 \Vert v^{n-1} \Vert_2^4, &d=2, \\
C_1 \Vert \Delta v^{n-1} \Vert_2^2  + C_2 \Vert \nabla \Delta v^{n-1} \Vert_2^{3/2} \Vert \nabla v^{n-1} \Vert_2^{5/2} + C_3 \Vert \nabla v^{n-1} \Vert_2^4, &d=3.
\end{cases}
\end{aligned}
\end{align}
Summing over $n=1,...,m$ for any $m \in \{1,...,N\}$ in \eqref{eq 250923_06} and using \eqref{eq 250923_07} in the case $d=2$ yields
\begin{align}\label{eq 250923_08}
&\frac{1}{2} \mathbb{E}\Vert \Delta v^m \Vert_2^2 + \frac{1}{4} \sum\limits_{n=1}^m \mathbb{E} \Vert \Delta v^n - \Delta v^{n-1} \Vert_2^2 + \tau \sum\limits_{n=1}^m  \mathbb{E}\Vert \nabla \Delta v^n \Vert_2^2 \\
&\leq \mathbb{E} \Vert \Delta v^0 \Vert_2^2 + \tau  \sum\limits_{n=1}^m \mathbb{E} \bigg(C_1 \Vert \Delta v^{n-1} \Vert_2^2  + C_2 \Vert \Delta v^{n-1} \Vert_2^2 \Vert \nabla v^{n-1} \Vert_2^2 + C_3 \Vert \nabla v^{n-1} \Vert_2^4 + C_4 \Vert v^{n-1} \Vert_2^4 \bigg) . \notag
\end{align}
In the case $d=3$ we obtain
\begin{align*}
&\frac{1}{2} \mathbb{E}\Vert \Delta v^m \Vert_2^2 + \frac{1}{4} \sum\limits_{n=1}^m \mathbb{E} \Vert \Delta v^n - \Delta v^{n-1} \Vert_2^2 + \tau \sum\limits_{n=1}^m  \mathbb{E}\Vert \nabla \Delta v^n \Vert_2^2 \\
&\leq \mathbb{E} \Vert \Delta v^0 \Vert_2^2 + \tau  \sum\limits_{n=1}^m \mathbb{E} \bigg(C_1 \Vert \Delta v^{n-1} \Vert_2^2  + C_2 \Vert \nabla \Delta v^{n-1} \Vert_2^{5/2} \Vert \nabla v^{n-1} \Vert_2^{3/2} + C_3 \Vert \nabla v^{n-1} \Vert_2^4 \bigg) \\
&\leq \mathbb{E} \Vert \Delta v^0 \Vert_2^2  + \tau  \sum\limits_{n=1}^m \mathbb{E} \bigg(C_1 \Vert \Delta v^{n-1} \Vert_2^2  + \frac{1}{2} \Vert \nabla \Delta v^{n-1} \Vert_2^2 + \tilde{C}_2 \Vert \nabla v^{n-1} \Vert_2^{10} + C_3 \Vert \nabla v^{n-1} \Vert_2^4 \bigg).
\end{align*}
Hence we have
\begin{align}\label{eq 141223_01}
&\frac{1}{2} \mathbb{E}\Vert \Delta v^m \Vert_2^2 + \frac{1}{4} \sum\limits_{n=1}^m \mathbb{E} \Vert \Delta v^n - \Delta v^{n-1} \Vert_2^2 + \frac{\tau}{2} \sum\limits_{n=1}^m  \mathbb{E}\Vert \nabla \Delta v^n \Vert_2^2 \\
&\leq \mathbb{E} \Vert \Delta v^0 \Vert_2^2 + \frac{\tau}{2} \E \Vert \nabla \Delta u_0 \Vert_2^2 + \tau  \sum\limits_{n=1}^m \mathbb{E} \bigg(C_1 \Vert \Delta v^{n-1} \Vert_2^2 + \tilde{C}_2 \Vert \nabla v^{n-1} \Vert_2^{10} + C_3 \Vert \nabla v^{n-1} \Vert_2^4 \bigg). \notag
\end{align}
Applying Lemma \ref{Lemma 250923_02} and Lemma \ref{Gronwall} in both cases $d=2,3$, we may conclude the existence of a constant $\tilde{K}_3>0$ such that
\begin{align*}
\sup_{n \in \{1,...,N\}} \mathbb{E} \Vert \Delta v^n \Vert_2^2 \leq \tilde{K}_3.
\end{align*}
Now, using this estimate in \eqref{eq 250923_08} or \eqref{eq 141223_01} respectively with $m=N$ yields the assertion.
\end{proof}
\section{Stability estimates for the stochastic heat equation}
\subsection{A regularity result}
\begin{prop}\label{240312_prop1}
Let $(R1)$ and $(R2)$ be satisfied. Then, the unique variational solution $u$ to \eqref{equation} has the additional regularity $u\in L^2(\Omega;\mathcal{C}([0,T];H^2(\Lambda)))$ and satisfies the weak homogeneous Neumann boundary condition.
\end{prop}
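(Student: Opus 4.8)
The plan is to obtain the $H^2$-regularity of $u$ by an It\^o energy estimate carried out, at the level of a spectral Galerkin approximation, on $\E\Vert \Delta u\Vert_2^2$; this is the continuous counterpart of the discrete computations in Lemma \ref{Lemma 250923_01}--Lemma \ref{Lemma 250923_03}. I would let $(e_k)_{k\in\N}$ be the $L^2(\Lambda)$-orthonormal eigenbasis of the Neumann Laplacian, $-\Delta e_k=\lambda_k e_k$ with $\nabla e_k\cdot\mathbf n=0$, and let $P_m$ denote the orthogonal projection onto $\mathrm{span}(e_1,\dots,e_m)$. Since each $e_k$ satisfies the Neumann condition and $P_m$ commutes with $\Delta$ on its range, the Galerkin problem
\[ du^m-\Delta u^m\,dt=P_m g(u^m)\,dW,\qquad u^m(0)=P_m u_0, \]
is a finite-dimensional It\^o system with values in $\mathrm{span}(e_1,\dots,e_m)$, on which the infinite-dimensional It\^o formula is rigorous; by $(R2)$, $P_m u_0$ is bounded in $L^2(\Omega;H^2(\Lambda))$ uniformly in $m$.

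\textbf{Energy estimate.} The heart of the argument is It\^o's formula applied to $\Vert\Delta u^m(t)\Vert_2^2$,
\[ d\Vert\Delta u^m\Vert_2^2+2\Vert\nabla\Delta u^m\Vert_2^2\,dt = 2(\Delta u^m,P_m\Delta g(u^m))_2\,dW+\Vert P_m\Delta g(u^m)\Vert_2^2\,dt, \]
which is the exact continuous analogue of \eqref{eq 250923_06}. I would estimate $\Vert\Delta g(u^m)\Vert_2^2$ by the Gagliardo--Nirenberg inequality and \eqref{eq 041023_02} precisely as in \eqref{eq 250923_07}. In $d=3$ the critical term $\Vert\nabla\Delta u^m\Vert_2^{3/2}\Vert\nabla u^m\Vert_2^{5/2}$ is absorbed into the dissipation $2\Vert\nabla\Delta u^m\Vert_2^2$ by Young's inequality, producing a remainder controlled by $\Vert\nabla u^m\Vert_2^{10}$. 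To handle this remainder I would first run the continuous versions of Lemma \ref{Lemma 250923_01} and Lemma \ref{Lemma 250923_02} (It\^o's formula applied to $\Vert u^m\Vert_2^2$ and to $\Vert u^m\Vert_{H^1}^{2^{r+1}}$), obtaining $\sup_{t}\E\Vert\nabla u^m\Vert_2^{10}\le C$ uniformly in $m$. Taking expectations removes the stochastic integral, and the continuous Gronwall inequality then yields, uniformly in $m$,
\[ \sup_{t\in[0,T]}\E\Vert\Delta u^m(t)\Vert_2^2+\E\int_0^T\Vert\nabla\Delta u^m\Vert_2^2\,dt\le C. \]

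\textbf{Passage to the limit, continuity and boundary condition.} Together with \eqref{eq 041023_02}, this bounds $(u^m)$ in $L^2(\Omega;L^\infty(0,T;H^2(\Lambda)))$ uniformly in $m$; by weak-$*$ compactness and uniqueness of the variational solution (Definition \ref{solution}) the limit is $u$, and weak lower semicontinuity transfers the bound, giving $u\in L^2(\Omega;L^\infty(0,T;H^2(\Lambda)))$ with $\nabla\Delta u\in L^2(\Omega\times(0,T);L^2(\Lambda))$. To upgrade $L^\infty$ to $\mathcal C([0,T];H^2(\Lambda))$, I would observe that $\Delta u$ solves $d(\Delta u)-\Delta^2 u\,dt=\Delta g(u)\,dW$ in the Gelfand triple $H^1\subset L^2\subset H^{-1}$, since $\Delta u\in L^2(0,T;H^1)$ forces $\Delta^2 u\in L^2(0,T;H^{-1})$ while $\Delta g(u)\in L^2(0,T;L^2)$; the standard It\^o-based continuity theorem for variational solutions then gives $\Delta u\in\mathcal C([0,T];L^2(\Lambda))$, which with $u\in\mathcal C([0,T];L^2(\Lambda))$ and \eqref{eq 041023_02} yields strong continuity in $H^2$. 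Finally, for each $t$ the identity $(\Delta u(t),w)_2=-(\nabla u(t),\nabla w)_2$ for all $w\in H^1(\Lambda)$, which characterises the weak Neumann Laplacian, combined with $u(t)\in H^2(\Lambda)$ and Green's formula forces $\int_{\partial\Lambda}w\,\nabla u(t)\cdot\mathbf n\,dS=0$, i.e.\ the weak homogeneous Neumann condition.

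\textbf{Main obstacle.} The decisive difficulty is the three-dimensional estimate: closing the $H^2$ energy balance there is possible only by absorbing the critical $\Vert\nabla\Delta u^m\Vert_2^{3/2}$ factor into the dissipation, which makes the tenth-moment bound on $\Vert\nabla u^m\Vert_2$ indispensable and hence forces the full moment hierarchy of Lemma \ref{Lemma 250923_02} to be replicated at the continuous level. The remaining points --- justifying the It\^o formula through the Galerkin truncation and performing the weak-to-strong continuity upgrade --- are routine once these uniform bounds are in hand.
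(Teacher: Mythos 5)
Your argument is sound, but it follows a genuinely different route from the paper. You build the $H^2$ bound on a spectral Galerkin approximation (Neumann eigenbasis) and run the It\^{o} energy identities for $\Vert u^m\Vert_2^2$, $\Vert u^m\Vert_{H^1}^{2^{r+1}}$ and $\Vert\Delta u^m\Vert_2^2$ in continuous time; the paper instead reuses the semi-implicit Euler approximations $u_N$ of Section 4, whose convergence to the variational solution it takes from the literature, and whose uniform bounds $\sup_n\E\Vert v^n\Vert_2^2$ and $\sup_n\E\Vert\Delta v^n\Vert_2^2$ are exactly Lemma \ref{Lemma 250923_01} and Lemma \ref{Lemma 250923_03} --- the discrete counterparts of your computations, including the same Gagliardo--Nirenberg treatment of $\Vert\Delta g(\cdot)\Vert_2^2$ as in \eqref{eq 250923_07} and the same absorption of the critical $d=3$ term at the price of a tenth-moment bound on $\Vert\nabla\cdot\Vert_2$ from Lemma \ref{Lemma 250923_02}. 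What the paper's choice buys is economy: the discrete estimates are needed anyway for the error analysis, so the proposition follows from \eqref{eq 041023_02}, weak compactness in $L^2(\Omega\times(0,T);H^2(\Lambda))$ and a direct appeal to the Liu--R\"ockner continuity theorem with $V=H=H^2(\Lambda)$, with no new energy estimates. What your route buys is a cleaner absorption step (the dissipation $\Vert\nabla\Delta u^m(t)\Vert_2^2$ absorbs the critical term at the same time $t$, whereas the discrete version shifts an index and invokes $\nabla\Delta v^0$), and your continuity upgrade --- viewing $\Delta u$ as a variational solution in $H^1\subset L^2\subset (H^1)^*$ --- is arguably more explicit than the paper's $V=H$ invocation; the price is that you must justify the Galerkin convergence to the variational solution and redo the whole moment hierarchy at the continuous level. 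Two small points to tighten: the bound $\sup_{t}\E\Vert\Delta u^m(t)\Vert_2^2\le C$ obtained after taking expectations gives boundedness in $L^\infty(0,T;L^2(\Omega;H^2(\Lambda)))$, not in $L^2(\Omega;L^\infty(0,T;H^2(\Lambda)))$ (the latter would need a Burkholder--Davis--Gundy argument on the supremum inside the expectation); this does not harm you since only the $L^2(\Omega\times(0,T);H^2(\Lambda))$ bound and the separate continuity argument are used. Also, applying \eqref{eq 041023_02} to increments $u(t)-u(s)$ in the final continuity step presupposes the weak Neumann condition for these increments, so the boundary-condition verification should be carried out before, not after, that step --- as in the paper's proof, it follows for each $t$ from \eqref{240311_eq1} and Green's formula.
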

\begin{rem}
Additionally we make sure that $\Delta u(t)$, the Neumann-Laplacian of $u$ at time $t \in [0,T]$, is an element of $L^2(\Lambda)$. Actually, this is the case if and only if $\Delta u(t)$ satisfies the weak homogeneous Neumann boundary condition for all $t\in [0,T]$.
\end{rem}
\begin{proof}
For $N\in\mathbb{N}$ we define the piecewise constant process
\[u_N(t)=\sum_{n=1}^N u_{n-1}\mathds{1}_{[t_{n-1}, t_n)}(t), \ u_N(T)=u_N \]
where, for $n=1,\ldots,N$, $u_n$ is the unique $\mathcal{F}_{t_n}$-measurable solution to \eqref{ES} starting at the $\mathcal{F}_0$-measurable initial datum $u_0 \in L^2(\Omega; H^2(\Lambda))$. Consequently, $u_N(t)\in H^2(\Lambda)$ $\mathds{P}$-a.s. in $\Omega$ for all $t\in [0,T]$. Repeating the proofs of Theorem 2.1 and Proposition 2.5 in \cite{BBBLV17} with $\psi_{\varepsilon} = w_s = f = 0$,
it follows that the semi-implicit Euler approximations $(u_N)_{N\in\mathbb{N}}$ converge for $N\rightarrow\infty$ towards the unique variational solution $u$ to \eqref{equation} in $L^2(\Omega;L^2(0,T;L^2(\Lambda)))$.
Thanks to \eqref{eq 041023_02},
\begin{align*}
\erww{\Vert u_N(t)\Vert^2_{H^2}}\leq C\left(\erww{\Vert\Delta u_N(t)\Vert^2_2}+\erww{\Vert u_N(t)\Vert^2_2}\right) 
\end{align*}
for all $t\in [0,T]$. By Lemma \ref{Lemma 250923_01} and Lemma \ref{Lemma 250923_03} it follows that the quantities in expectation on the right-hand side of the above equation are uniformly bounded by positive constants $K_1$ and $K_3$, respectively, which do not depend on $t\in [0,T]$ and on $N\in\mathbb{N}$. Consequently, the sequence $(u_N)_{N\in\mathbb{N}}$ is in particular bounded in $L^2(\Omega;L^2(0,T;H^2(\Lambda)))$. Hence, up to a not relabeled subsequence, $(u_N)_{N\in\mathbb{N}}$ converges weakly to $u$ in $L^2(\Omega\times (0,T);H^2(\Lambda)))$. In this way we also get the additional information that $u$ is progressively measurable with values in $H^2(\Lambda)$. Now, according to \cite{LR}, Theorem 4.2.5 with $V=H=H^2(\Lambda)$, we get $u\in L^2(\Omega;\mathcal{C}([0,T];H^2(\Lambda)))$. We make sure that $\Delta u(t)$, the Neumann-Laplacian of $u$ at time $t \in [0,T]$, is an element of $L^2(\Lambda)$ and satisfies the weak homogeneous Neumann boundary condition.
Now, equality \eqref{240311_eq1} yields that $\mathbb{P}$-a.s., for all $t \in [0,T]$ we have $ \int_0^t \Delta u(s) \, ds\in L^2(\Lambda)$.
This yields
\begin{align}\label{270324 eq2}
\begin{aligned}
\bigg( \int_0^t \Delta u(s) \, ds , v \bigg)_2 &= \int_0^t (\Delta u(s), v)_2 \, ds = - \int_0^t \int_{\Lambda} \nabla u(s) \nabla v \, dx \, ds \\
&= \int_0^t \bigg( \int_{\Lambda} (\Delta_K u(s)) v \, dx - \int_{\partial \Lambda} v \nabla u(s) \cdot \mathbf{n} \, dS \bigg) ds \\
&= \bigg( \int_0^t (\Delta_K u(s)),  v \bigg)_2 - \int_0^t \int_{\partial \Lambda} v \nabla u(s) \cdot \mathbf{n} \, dS \, ds 
\end{aligned}
\end{align}
for all $v \in H^1(\Lambda)$, where $\Delta_K w = \sum\limits_{i=1}^d \partial_{ii} w \in L^2(\Lambda)$ denotes the classical Laplace for $w \in H^2(\Lambda)$. For $v \in C_c^{\infty}(\Lambda)$, the boundary term in \eqref{270324 eq2} vanishes and we have
\begin{align}\label{270324 eq3}
\bigg( \int_0^t \Delta u(s) \, ds , v \bigg)_2= \bigg( \int_0^t (\Delta_K u(s)),  v \bigg)_2.
\end{align}
Since $C_c^{\infty}(\Lambda)$ is dense in $L^2(\Lambda)$, equality \eqref{270324 eq3} holds true for all $v \in L^2(\Lambda)$, especially for all $v \in H^1(\Lambda)$. Together with \eqref{270324 eq2} we obtain $\mathbb{P}$-a.s., for all $t \in [0,T]$ and all $v \in H^1(\Lambda)$
\begin{align*}
\int_0^t \int_{\partial \Lambda} v \nabla u(s) \cdot \mathbf{n} \, dS \, ds =0.
\end{align*}
Since $\nabla u $ is continuous with values in $H^1(\Lambda)$, we get $\mathbb{P}$-a.s., for all $t \in [0,T]$ and all $v \in H^1(\Lambda)$
\begin{align*}
\int_{\partial \Lambda} v \nabla u(t) \cdot \mathbf{n} \, dS =0.
\end{align*}
\end{proof}
\subsection{Stability estimates}
For the following results, the regularity $u\in L^2(\Omega;\mathcal{C}([0,T];H^2(\Lambda)))$ of the variational solution to the heat equation is crucial. It is provided by Proposition \ref{240312_prop1}, when $(R1)$ and $(R2)$ are satisfied.
\begin{lem}\label{Lemma 220923_01}
Let (R1) and (R2) be satisfied. Then, there exists a constant 
\[K_4=K_4(T,\Lambda,\Vert u \Vert_{L^2(\Omega; \mathcal{C}([0,T]; H^2(\Lambda)))}, g)>0\] such that for all $s,t \in [0,T]$:
\begin{align*}
\mathbb{E} \Vert u(t) - u(s) \Vert_{L^2(\Lambda)}^2 \leq K_4 |t-s|.
\end{align*}
\end{lem}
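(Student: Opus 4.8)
The plan is to read off the increment $u(t)-u(s)$ directly from the variational formulation \eqref{240311_eq1}. Fixing $s\leq t$ in $[0,T]$ and subtracting \eqref{240311_eq1} evaluated at $s$ from the same identity at $t$, we obtain, $\mathbb{P}$-a.s. and in $L^2(\Lambda)$,
\[
u(t)-u(s)=\int_s^t \Delta u(r)\,dr+\int_s^t g(u(r))\,dW(r).
\]
Taking the $L^2(\Lambda)$-norm, squaring, using $(a+b)^2\leq 2a^2+2b^2$ and passing to expectation splits the estimate into a deterministic drift contribution and a stochastic contribution, which I treat separately.

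For the drift term I would apply the Cauchy--Schwarz (Jensen) inequality for the Bochner integral in time, namely $\big\Vert \int_s^t \Delta u(r)\,dr\big\Vert_2^2\leq |t-s|\int_s^t \Vert \Delta u(r)\Vert_2^2\,dr$, and then bound the integrand uniformly in $r$. This is precisely where the regularity $u\in L^2(\Omega;\mathcal{C}([0,T];H^2(\Lambda)))$ from Proposition \ref{240312_prop1} enters: it guarantees $\sup_{r\in[0,T]}\Vert \Delta u(r)\Vert_2^2\leq \Vert u\Vert_{\mathcal{C}([0,T];H^2(\Lambda))}^2$, so that $\int_s^t \Vert \Delta u(r)\Vert_2^2\,dr\leq |t-s|\,\Vert u\Vert_{\mathcal{C}([0,T];H^2)}^2$. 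Taking expectation and using $|t-s|\leq T$ yields a bound of the form $C\,T\,\Vert u\Vert_{L^2(\Omega;\mathcal{C}([0,T];H^2))}^2\,|t-s|$, which is linear in $|t-s|$ as required.

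For the stochastic term I would invoke the It\^{o} isometry in $L^2(\Lambda)$, giving $\mathbb{E}\big\Vert \int_s^t g(u(r))\,dW(r)\big\Vert_2^2=\mathbb{E}\int_s^t \Vert g(u(r))\Vert_2^2\,dr$; the predictability and square-integrability of $r\mapsto g(u(r))$ needed to legitimize this follow from $u$ being a predictable process in $L^2(\Omega\times(0,T);L^2(\Lambda))$ together with the Lipschitz (hence linear-growth) property of $g$. Then the growth bound $|g(r)|^2\leq C_g(1+r^2)$ from assumption (ii) gives $\Vert g(u(r))\Vert_2^2\leq C_g\big(|\Lambda|+\Vert u(r)\Vert_2^2\big)\leq C_g\big(|\Lambda|+\Vert u\Vert_{\mathcal{C}([0,T];H^2)}^2\big)$, so this contribution is bounded by $C_g\big(|\Lambda|+\mathbb{E}\Vert u\Vert_{\mathcal{C}([0,T];H^2)}^2\big)\,|t-s|$, again linear in $|t-s|$.

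Combining the two estimates and collecting the constants into a single $K_4$ depending only on $T$, $\Lambda$, $g$ (through $C_g$), and $\Vert u\Vert_{L^2(\Omega;\mathcal{C}([0,T];H^2(\Lambda)))}$ completes the proof. The only genuine obstacle is the uniform-in-time control of $\Vert \Delta u(r)\Vert_2$ in the drift term: without the $H^2$-in-time-continuous regularity, the pointwise bound on $\Delta u$ would fail and one would only recover a weaker rate. Since Proposition \ref{240312_prop1} supplies exactly this regularity under $(R1)$ and $(R2)$, the drift term is handled cleanly, and the stochastic term is then routine via the It\^{o} isometry and the linear growth of $g$.
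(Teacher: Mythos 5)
Your argument is correct and yields the stated bound with a constant of the admissible form. It differs from the paper's proof in the computational route, though not in the essential ingredients. You expand $\mathbb{E}\Vert u(t)-u(s)\Vert_2^2$ directly via $(a+b)^2\leq 2a^2+2b^2$, the Cauchy--Schwarz inequality for the Bochner integral in time, and the It\^{o} isometry; this gives the drift contribution $2|t-s|\,\mathbb{E}\int_s^t\Vert\Delta u(r)\Vert_2^2\,dr$ and the noise contribution $2\,\mathbb{E}\int_s^t\Vert g(u(r))\Vert_2^2\,dr$, each bounded by a multiple of $|t-s|$ using the uniform-in-time control supplied by Proposition \ref{240312_prop1} and the growth of $g$; no further tool is needed. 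The paper instead applies It\^{o}'s formula with $\frac{1}{2}\Vert\cdot\Vert_2^2$ to the increment process $v(t)=u(t)-u(s)$, estimates the resulting drift pairing $(\Delta u(r),v(r))_2$ by Young's inequality, and closes the estimate with Gronwall's lemma to absorb the term $\frac{1}{2}\mathbb{E}\int_s^t\Vert v(r)\Vert_2^2\,dr$. Your version is more elementary (no It\^{o} formula, no Gronwall) and arguably cleaner for this particular lemma; the paper's It\^{o}-formula template is the one it reuses for Lemma \ref{Lemma 220923_02}, where one must pair against $\Delta v$ and a direct squaring would be less convenient. Both proofs rest on exactly the same regularity input, namely $u\in L^2(\Omega;\mathcal{C}([0,T];H^2(\Lambda)))$ to bound $\Vert\Delta u(r)\Vert_2$ uniformly in $r$, and on the predictability and square-integrability of $r\mapsto g(u(r))$ to justify the stochastic estimates, which you correctly flag.
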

\begin{proof}
We fix $s \in [0,T]$. Then, for any $t \in [0,T]$, we have
\begin{align}\label{Eq L2.1}
v(t):= u(t) - u(s) = \int_s^t \Delta u(r) \, dr + \int_s^t g(u(r)) \, dW.
\end{align}
Applying It\^{o}'s formula with $\frac{1}{2} \Vert \cdot \Vert_2^2$ to $v$ we get
\begin{align*}
\frac{1}{2} \Vert v(t) \Vert_2^2 = \int_s^t (\Delta u(r), v(r))_2 \,dr + \int_s^t (v(r), g(u(r)) \, dW)_2 + \frac{1}{2} \int_s^t \Vert g(u(r)) \Vert_2^2 \, dr.
\end{align*}
We apply the expectation on both sides of the equality. Then, we use Young's inequality and the growth condition of $g$ to obtain:
\begin{align*}
\frac{1}{2} \mathbb{E} \Vert v(t) \Vert_2^2 \leq &\frac{1}{2} \mathbb{E} \int_s^t \Vert \Delta u(r) \Vert_2^2 \, dr +  \frac{1}{2} \mathbb{E} \int_s^t \Vert v(r)\Vert_2^2 \,dr +\frac{1}{2} C_g \mathbb{E}\int_s^t |\Lambda| + \Vert u(r) \Vert_2^2 \, dr \\
&\leq \frac{1}{2}\Vert \Delta u \Vert_{L^2(\Omega, \mathcal{C}([0,T]; L^2(\Lambda)))}^2 |t-s|+  \frac{1}{2} \mathbb{E} \int_s^t \Vert v(r)\Vert_2^2 \,dr +\frac{1}{2} C_g |\Lambda| |t-s| \\
&+ \frac{1}{2} C_g \Vert u\Vert_{L^2(\Omega, \mathcal{C}([0,T]; L^2(\Lambda)))}^2 |t-s| \\
&= K_4 |t-s| + \frac{1}{2} \mathbb{E} \int_s^t \Vert v(r)\Vert_2^2 \,dr,
\end{align*}
where $K_4=K_4(T,\Lambda,\Vert u \Vert_{L^2(\Omega; \mathcal{C}([0,T]; H^2(\Lambda)))}, g)>0$. Now, Gronwall's lemma yields the assertion.
\end{proof}
\begin{lem}\label{Lemma 220923_02}
Let (R1) and (R2) be satisfied. Then, there exists a constant 
\[K_5=K_5(T,\Lambda,\Vert u \Vert_{L^2(\Omega; \mathcal{C}([0,T]; H^2(\Lambda)))}, \Vert g' \Vert_{\infty})>0\] 
such that for all $s,t \in [0,T]$:
\begin{align*}
\mathbb{E} \Vert \nabla(u(t) - u(s)) \Vert_{L^2(\Lambda)}^2 \leq K_5 |t-s|.
\end{align*}
\end{lem}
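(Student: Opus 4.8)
The plan is to run the argument of Lemma \ref{Lemma 220923_01} one derivative higher, replacing the functional $\frac12\Vert\cdot\Vert_2^2$ by the gradient energy $\frac12\Vert\nabla\cdot\Vert_2^2$. Fix $s\in[0,T]$ and, without loss of generality, take $t\geq s$ (the case $t<s$ follows by exchanging the roles of $s$ and $t$, since the bound only involves $|t-s|$). By Proposition \ref{240312_prop1} the variational solution enjoys the regularity $u\in L^2(\Omega;\mathcal{C}([0,T];H^2(\Lambda)))$ and satisfies the weak homogeneous Neumann boundary condition, and one has $v(t):=u(t)-u(s)=\int_s^t\Delta u(r)\,dr+\int_s^t g(u(r))\,dW$ with $v(s)=0$.

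First I would apply It\^o's formula to $\frac12\Vert\nabla v(t)\Vert_2^2=\frac12\Vert A^{1/2}v(t)\Vert_2^2$, where $A=-\Delta$ is the self-adjoint, nonnegative Neumann Laplacian. Since the $L^2$-gradient of this functional at $w$ is $-\Delta w$ and its Hessian is the constant bilinear form $(h,k)\mapsto(\nabla h,\nabla k)_2$, this produces
\begin{align*}
\frac12\Vert\nabla v(t)\Vert_2^2=-\int_s^t(\Delta v(r),\Delta u(r))_2\,dr-\int_s^t(\Delta v(r),g(u(r)))_2\,dW+\frac12\int_s^t\Vert\nabla g(u(r))\Vert_2^2\,dr,
\end{align*}
where every pairing is a genuine $L^2(\Lambda)$-inner product because $\Delta v(r),\Delta u(r)\in L^2(\Lambda)$ and $g(u(r))\in H^1(\Lambda)$ by $(R1)$.

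Next I would take expectations. The stochastic integral is a true martingale (its integrand lies in $L^2(\Omega\times(s,t))$ by the $H^2$-regularity and the linear growth of $g$), so it vanishes in expectation. For the drift the key observation is a sign trick that avoids the uncontrolled quantity $\Vert\Delta v\Vert_2^2$: writing $\Delta u(r)=\Delta v(r)+\Delta u(s)$ gives
\begin{align*}
-(\Delta v(r),\Delta u(r))_2=-\Vert\Delta v(r)\Vert_2^2-(\Delta v(r),\Delta u(s))_2\leq\tfrac12\Vert\Delta u(s)\Vert_2^2,
\end{align*}
hence $\mathbb{E}\int_s^t\bigl(-(\Delta v(r),\Delta u(r))_2\bigr)\,dr\leq\frac12\mathbb{E}\Vert\Delta u(s)\Vert_2^2\,|t-s|$. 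For the It\^o correction the chain rule and $(R1)$ give $\Vert\nabla g(u(r))\Vert_2^2=\Vert g'(u(r))\nabla u(r)\Vert_2^2\leq\Vert g'\Vert_\infty^2\Vert\nabla u(r)\Vert_2^2$. Bounding $\mathbb{E}\Vert\Delta u(s)\Vert_2^2$ and $\mathbb{E}\Vert\nabla u(r)\Vert_2^2$ by $\Vert u\Vert_{L^2(\Omega;\mathcal{C}([0,T];H^2(\Lambda)))}^2$ then yields $\mathbb{E}\Vert\nabla v(t)\Vert_2^2\leq K_5|t-s|$ with $K_5=K_5(T,\Lambda,\Vert u\Vert_{L^2(\Omega;\mathcal{C}([0,T];H^2(\Lambda)))},\Vert g'\Vert_\infty)$; thanks to the sign trick no Gronwall argument is needed.

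The main obstacle is the rigorous justification of the It\^o formula at this level, because $\frac12\Vert\nabla\cdot\Vert_2^2$ is the $V$-seminorm rather than the pivot norm of the Gelfand triple $H^1(\Lambda)\hookrightarrow L^2(\Lambda)\hookrightarrow(H^1(\Lambda))^*$ underlying Lemma \ref{Lemma 220923_01}, and full $H^3$-regularity (which would let one differentiate the equation and apply the standard $L^2$-It\^o formula to $\nabla v$, using $(\nabla v,\nabla\Delta u)_2=-(\Delta v,\Delta u)_2$ after the Neumann boundary term drops out) cannot be taken for granted on a polygonal domain. I would therefore establish the identity by spectral Galerkin approximation in the orthonormal eigenbasis $(e_k)_k$ of $A$ with eigenvalues $(\lambda_k)_k$: the projections commute with $A$, so finite-dimensional It\^o applies to $\frac12\sum_{k\leq m}\lambda_k(v,e_k)_2^2$, and one passes to the limit $m\to\infty$ using $u\in D(A)=H^2(\Lambda)$ (with weak Neumann condition) and $g(u)\in D(A^{1/2})=H^1(\Lambda)$ together with dominated convergence. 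All remaining steps are routine applications of Young's and the Cauchy--Schwarz inequalities.
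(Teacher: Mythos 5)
Your proposal is correct and follows essentially the same route as the paper: It\^o's formula applied to $\frac12\Vert\nabla\cdot\Vert_2^2$ along $v(t)=u(t)-u(s)$, the cross term involving $\Delta u(s)$ absorbed by Young's inequality into the nonpositive square term so that no Gronwall argument is needed, and the It\^o correction bounded via $\Vert\nabla g(u)\Vert_2\leq\Vert g'\Vert_\infty\Vert\nabla u\Vert_2$. The paper invokes the It\^o step directly without the spectral Galerkin justification you supply, but the substance of the argument is identical.
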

\begin{proof}
We fix $s \in [0,T]$. Then, for any $t \in [0,T]$, we set $v(t):= u(t) - u(s)$. Now, applying It\^{o}'s formula with $\frac{1}{2} \Vert \nabla \cdot \Vert_2^2$ to \eqref{Eq L2.1} we obtain
\begin{align*}
\frac{1}{2} \Vert \nabla v(t) \Vert_2^2 = - \int_s^t (\Delta u(r), \Delta v(r))_2 \,dr - \int_s^t (\Delta v(r), g(u(r)) \, dW)_2 - \frac{1}{2} \int_s^t (g(u(r)), \Delta g(u(r))_2 \, dr.
\end{align*}
Taking the expectation on both sides of the equality, applying Gauss' theorem and Young's inequality yields
\begin{align*}
\frac{1}{2} \mathbb{E} \Vert \nabla v(t) \Vert_2^2 = &- \mathbb{E} \int_s^t (\Delta u(r), \Delta u(r) - \Delta u(s))_2 \,dr + \frac{1}{2} \mathbb{E} \int_s^t \Vert \nabla g(u(r)) \Vert_2^2 \, dr \\
&\leq \mathbb{E} \int_s^t (\Delta u(r), \Delta u(s))_2 \,dr -  \mathbb{E} \int_s^t \Vert \Delta u(r) \Vert_2^2 \, dr+ \frac{1}{2} \Vert g' \Vert_{\infty}^2 \mathbb{E} \int_s^t \Vert \nabla u(r) \Vert_2^2 \, dr \\
&\leq -  \frac{1}{2} \mathbb{E} \int_s^t \Vert \Delta u(r) \Vert_2^2 \, dr + \frac{1}{2} \Vert \Delta u \Vert_{L^2(\Omega; \mathcal{C}([0,T]; L^2(\Lambda)))}^2 |t-s| \\
&+ \frac{1}{2} \Vert g' \Vert_{\infty}^2 \Vert \nabla u \Vert_{L^2(\Omega; \mathcal{C}([0,T]; L^2(\Lambda)))}^2 |t-s| \\
&\leq K_5 |t-s|,
\end{align*}
where $K_5=K_5(T,\Lambda,\Vert u \Vert_{L^2(\Omega; \mathcal{C}([0,T]; H^2(\Lambda)))}, \Vert g' \Vert_{\infty})>0$.
\end{proof}
\section{Convergence rates}\label{Convergence rates}
Now, let us assume that $(R1)$ and $(R2)$ are satisfied and $u$ is a solution to \eqref{equation}. Then, by Proposition \ref{240312_prop1}, $u$ is in $L^2(\Omega; \mathcal{C}([0,T]; H^2(\Lambda)))$ and satisfies the homogeneous Neumann boundary condition in the weak sense.
\begin{lem}\label{Lemma 220923_03}
For $n=1,...,N$ let $v^n$ be a solution to \eqref{ES} for $v^0=u_0$. Then there exists a constant $K_6>0$ not depending on $n$ and $N$ such that
\begin{align*}
\sup_{n\in \{1,...,N\}} \mathbb{E} \Vert u(t_n) - v^n \Vert_2^2 + \tau \sum\limits_{n=1}^N \mathbb{E} \Vert \nabla(u(t_n) - v^n) \Vert_2^2 \leq K_6 \tau.
\end{align*}
\end{lem}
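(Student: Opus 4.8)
The plan is to control the error $e^n := u(t_n) - v^n$, which satisfies $e^0 = u(0) - u_0 = 0$ since $v^0 = u_0$. First I would subtract the Euler scheme \eqref{ES} from the variational formulation \eqref{240311_eq1} written on the subinterval $[t_{n-1}, t_n]$. Splitting $\int_{t_{n-1}}^{t_n}\Delta u(s)\,ds - \tau\Delta v^n$ into the consistency error $\int_{t_{n-1}}^{t_n}\Delta(u(s)-u(t_n))\,ds$ plus $\tau\Delta e^n$, this gives the error equation
\begin{align*}
e^n - e^{n-1} - \tau \Delta e^n = \int_{t_{n-1}}^{t_n} \Delta(u(s) - u(t_n))\, ds + \int_{t_{n-1}}^{t_n} \big(g(u(s)) - g(v^{n-1})\big)\, dW(s).
\end{align*}
Both $u(s)$ (by Proposition \ref{240312_prop1}) and $v^n$ lie in $H^2(\Lambda)$ and satisfy the weak homogeneous Neumann boundary condition, which will be essential for the integration by parts below.

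Next I would test this equation with $e^n$ in $L^2(\Lambda)$. Using the identity $(e^n - e^{n-1}, e^n)_2 = \tfrac12(\Vert e^n\Vert_2^2 - \Vert e^{n-1}\Vert_2^2 + \Vert e^n - e^{n-1}\Vert_2^2)$ and the integration by parts $-\tau(\Delta e^n, e^n)_2 = \tau\Vert\nabla e^n\Vert_2^2$ (the boundary term vanishing by the Neumann condition), the left-hand side becomes the usual dissipative combination; then I take expectations and sum over $n=1,\dots,m$. For the stochastic term I would use that $e^{n-1}$ is $\mathcal{F}_{t_{n-1}}$-measurable and the It\^o integral has conditional mean zero, so one may replace $e^n$ by $e^n - e^{n-1}$. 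Young's inequality then produces $\tfrac14\mathbb{E}\Vert e^n - e^{n-1}\Vert_2^2$ (absorbed on the left) plus $\mathbb{E}\Vert\int_{t_{n-1}}^{t_n}(g(u(s)) - g(v^{n-1}))\,dW\Vert_2^2$, which by It\^o isometry equals $\mathbb{E}\int_{t_{n-1}}^{t_n}\Vert g(u(s)) - g(v^{n-1})\Vert_2^2\,ds$. Splitting $g(u(s)) - g(v^{n-1}) = (g(u(s)) - g(u(t_{n-1}))) + (g(u(t_{n-1})) - g(v^{n-1}))$ and using the Lipschitz bound on $g$ controls this by $2\Vert g'\Vert_\infty^2\int_{t_{n-1}}^{t_n}\mathbb{E}\Vert u(s)-u(t_{n-1})\Vert_2^2\,ds + 2\Vert g'\Vert_\infty^2\tau\,\mathbb{E}\Vert e^{n-1}\Vert_2^2$; by Lemma \ref{Lemma 220923_01} the first integrand is $O(\tau)$, so after summation this contributes $O(\tau)$ together with a term $\tau\sum_n\mathbb{E}\Vert e^{n-1}\Vert_2^2$ reserved for Gronwall.

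For the deterministic consistency term I would integrate by parts, $(\Delta(u(s)-u(t_n)), e^n)_2 = -(\nabla(u(s)-u(t_n)), \nabla e^n)_2$, and then use Cauchy--Schwarz in time together with Young's inequality to peel off $\tfrac{\tau}{2}\mathbb{E}\Vert\nabla e^n\Vert_2^2$ for absorption on the left; the remaining factor $\int_{t_{n-1}}^{t_n}\mathbb{E}\Vert\nabla(u(s)-u(t_n))\Vert_2^2\,ds$ is $O(\tau^2)$ per step by Lemma \ref{Lemma 220923_02}, hence $O(\tau)$ after summation. Collecting everything and using $e^0 = 0$ yields $\tfrac12\mathbb{E}\Vert e^m\Vert_2^2 + \tfrac{\tau}{2}\sum_{n=1}^m\mathbb{E}\Vert\nabla e^n\Vert_2^2 \leq C\tau + C\tau\sum_{n=1}^m\mathbb{E}\Vert e^{n-1}\Vert_2^2$ for a constant $C$ independent of $N$. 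The discrete Gronwall inequality (Lemma \ref{Gronwall}) then gives $\sup_m\mathbb{E}\Vert e^m\Vert_2^2 \leq K_6\tau$, and reinserting this bound into the summed inequality controls $\tau\sum_n\mathbb{E}\Vert\nabla e^n\Vert_2^2$ by $O(\tau)$ as well, which is the claim. I expect the main obstacle to be the deterministic consistency term: one must exploit the full $H^2$-in-space regularity of $u$ (Proposition \ref{240312_prop1}) and the Hölder-in-time estimate for $\nabla u$ (Lemma \ref{Lemma 220923_02}), and balance the powers of $\tau$ so that the factor $\tau\Vert\nabla e^n\Vert_2^2$ generated by Young's inequality is exactly absorbable by the dissipative term on the left, rather than leaking into a term of larger order.
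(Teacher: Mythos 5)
Your proposal is correct and follows essentially the same route as the paper's proof: the same error equation with the same splitting of the Laplacian into consistency part plus $\tau\Delta e^n$ and of $g(u(s))-g(v^{n-1})$ into two increments, the same testing with $e^n$, the same use of the martingale property, It\^{o} isometry, Lemma \ref{Lemma 220923_01} and Lemma \ref{Lemma 220923_02}, and the same absorption-plus-Gronwall conclusion. No substantive differences.
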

\begin{proof}
We set $e^n:= u(t_n) - v^n$. Then, we have
\begin{align*}
e^n - e^{n-1} = &\int_{t_{n-1}}^{t_n} \Delta (u(s)- v^n) \, ds + \int_{t_{n-1}}^{t_n}g(u(s)) - g(v^{n-1})\, dW\\
&= \int_{t_{n-1}}^{t_n} \Delta (u(s)- u(t_n)) \, ds  + \int_{t_{n-1}}^{t_n} \Delta e^n \, ds\\
&+ \int_{t_{n-1}}^{t_n}g(u(s)) - g(u(t_{n-1}))\, dW + \int_{t_{n-1}}^{t_n}g(u(t_{n-1})) - g(v^{n-1})\, dW.
\end{align*}
Multiplying with $e^n$ and integrating over $\Lambda$ yields
\begin{align*}
\frac{1}{2} &\bigg( \Vert e^n \Vert_2^2 - \Vert e^{n-1} \Vert_2^2 + \Vert e^n - e^{n-1} \Vert_2^2\bigg) = (e^n, e^n - e^{n-1})_2 \\
&= \int_{t_{n-1}}^{t_n} \int_{\Lambda} \Delta (u(s)- u(t_n)) e^n \, ds  + \int_{t_{n-1}}^{t_n} \int_{\Lambda} \Delta e^n \cdot e^n\, ds\\
&+ \int_{t_{n-1}}^{t_n} \int_{\Lambda}(g(u(s)) - g(u(t_{n-1}))) e^n\, dW + \int_{t_{n-1}}^{t_n} \int_{\Lambda} (g(u(t_{n-1})) - g(v^{n-1})) e^n\, dW.
\end{align*}
Taking expectation we obtain the equality
\begin{align}\label{eq 220923_01}
\frac{1}{2} \mathbb{E}\bigg( \Vert e^n \Vert_2^2 - \Vert e^{n-1} \Vert_2^2 + \Vert e^n - e^{n-1} \Vert_2^2\bigg) = I + II + III + IV,
\end{align}
where
\begin{align*}
I&= \mathbb{E} \int_{t_{n-1}}^{t_n} \int_{\Lambda} \Delta (u(s)- u(t_n)) e^n \, ds, \\
II &= \mathbb{E} \int_{t_{n-1}}^{t_n} \int_{\Lambda} \Delta e^n \cdot e^n\, ds, \\
III &= \mathbb{E} \int_{t_{n-1}}^{t_n} \int_{\Lambda}(g(u(s)) - g(u(t_{n-1}))) (e^n- e^{n-1}) \, dW, \\
IV&= \mathbb{E} \int_{t_{n-1}}^{t_n} \int_{\Lambda} (g(u(t_{n-1})) - g(v^{n-1})) (e^n-e^{n-1})\, dW.
\end{align*}
Young's inequality, Lemma \ref{Lemma 220923_01}, Lemma \ref{Lemma 220923_02} and It\^{o} isometry yield
\begin{align*}
| I | \leq &\bigg| \mathbb{E} \int_{t_{n-1}}^{t_n} \int_{\Lambda} \nabla (u(s)- u(t_n)) \nabla e^n \, ds \bigg| \leq \frac{1}{2}\mathbb{E} \int_{t_{n-1}}^{t_n} \Vert \nabla (u(s)- u(t_n)) \Vert_2^2 \, ds + \frac{1}{2} \mathbb{E} \int_{t_{n-1}}^{t_n}  \Vert \nabla e^n \Vert_2^2 \, ds\\
&\leq \frac{K_5}{2} \tau^2 + \frac{\tau}{2} \mathbb{E} \Vert \nabla e^n \Vert_2^2, \\
II &= -\tau \mathbb{E} \Vert \nabla e^n \Vert_2^2, \\
III &\leq 2\mathbb{E} \int_{t_{n-1}}^{t_n} \Vert g(u(s)) - g(u(t_{n-1})) \Vert_2^2 \, ds + \frac{1}{8} \mathbb{E} \Vert e^n - e^{n-1} \Vert_2^2 \\
&\leq 2\Vert g' \Vert_{\infty}^2 \mathbb{E} \int_{t_{n-1}}^{t_n} \Vert u(s) - u(t_{n-1}) \Vert_2^2 \, ds + \frac{1}{8} \mathbb{E} \Vert e^n - e^{n-1} \Vert_2^2 \\
&\leq 2\Vert g' \Vert_{\infty}^2 K_4 \tau^2 + \frac{1}{8} \mathbb{E} \Vert e^n - e^{n-1} \Vert_2^2, \\
IV &\leq 2\mathbb{E} \int_{t_{n-1}}^{t_n} \Vert g(u(t_{n-1})) - g(v^{n-1}) \Vert_2^2 \, ds + \frac{1}{8} \mathbb{E} \Vert e^n - e^{n-1} \Vert_2^2 \\
&\leq 2\Vert g' \Vert_{\infty}^2 \tau \mathbb{E} \Vert e^{n-1} \Vert_2^2 + \frac{1}{8} \mathbb{E} \Vert e^n - e^{n-1} \Vert_2^2.
\end{align*}
Summing over $n=1,...,m$, $m \in \{1,...,N\}$, equality \eqref{eq 220923_01} and the previous estimates yield
\begin{align*}
&\frac{1}{2} \mathbb{E} \Vert e^m \Vert_2^2 + \frac{1}{4} \sum\limits_{n=1}^m \mathbb{E} \Vert e^n - e^{n-1} \Vert_2^2 + \frac{\tau}{2} \sum\limits_{n=1}^m \mathbb{E} \Vert \nabla e^n \Vert_2^2\\
&\leq \left(2\Vert g' \Vert_{\infty}^2 K_4 T + \frac{K_5}{2}\right) \tau + 2\Vert g' \Vert_{\infty}^2 \tau \sum\limits_{n=1}^m \mathbb{E} \Vert e^{n-1} \Vert_2^2.
\end{align*}
Lemma \ref{Gronwall} yields that there exists $\tilde{K}_6>0$ such that 
\begin{align*}
\sup_{n=1,...,N} \mathbb{E} \Vert e^n \Vert_2^2 \leq \tilde{K}_6 \tau
\end{align*}
and therefore the assertion holds.
\end{proof}
\begin{prop}\label{281123_01}
Let $w \in H^2(\Lambda)$ and $\mathcal{T}$ an admissible mesh. Then, there exists a unique solution $(\tilde{w}_K)_{K \in \mathcal{T}}$ to the following discrete problem:\\
Find $(\tilde{w}_K)_{K \in \mathcal{T}}$ such that 
\begin{align*}
\sum\limits_{K \in \mathcal{T}} m_K \tilde{w}_K = \int_{\Lambda} w \, dx
\end{align*}
and
\begin{align*}
\sum\limits_{\sigma \in \mathcal{E}_K^{int}} \frac{m_{\sigma}}{d_{K|L}} (\tilde{w}_K - \tilde{w}_L) = - \int_{K} \Delta w \, dx, ~~~\forall K \in \mathcal{T}.
\end{align*}
\end{prop}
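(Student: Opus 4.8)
The plan is to read the second family of relations as a square linear system governed by the discrete Neumann--Laplacian, to resolve the apparent over-determination (the $\operatorname{card}(\mathcal{T})$ cell equations together with the one mean constraint, against $\operatorname{card}(\mathcal{T})$ unknowns) by a Fredholm-type argument, and to let the mean constraint pin down the additive constant in the kernel. First I would introduce on $\R^{\mathcal{T}}$ the symmetric bilinear form
\[
a(u,v) = \sum_{\sigma\in\edgesint}\frac{m_\sigma}{\dkl}(u_K-u_L)(v_K-v_L),
\]
and, via the discrete integration-by-parts identity of Remark \ref{DIBP}, identify the left-hand side of the cell relations with the symmetric linear map $A$ characterised by $(Au,v)=a(u,v)$ for all $v$, where $(\cdot,\cdot)$ is the Euclidean inner product on $\R^{\mathcal{T}}$; indeed $(Au)_K=\sum_{\sigma\in\mathcal{E}_K^{\operatorname{int}}}\tfrac{m_\sigma}{\dkl}(u_K-u_L)$. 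Writing $b_K:=-\int_K\Delta w\,dx$, the problem becomes: find $u$ in the affine set $\{\,u:\sum_K m_K u_K=\int_\Lambda w\,dx\,\}$ solving $Au=b$.

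Next I would establish the structural properties of $A$. It is symmetric and positive semidefinite, since $(Au,u)=a(u,u)=|u_h|_{1,h}^2\ge 0$; moreover $(Au,u)=0$ forces $u_K=u_L$ across every interior edge, so that, $\Lambda$ being connected (hence the mesh graph connected), $u$ must be constant. Thus $\ker A=\operatorname{span}\{\mathbf 1\}$ with $\mathbf 1=(1,\dots,1)$ is one-dimensional, and by symmetry $\operatorname{ran}A=(\ker A)^\perp=\{c\in\R^{\mathcal{T}}:\sum_K c_K=0\}$.

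It then remains to verify the compatibility condition $b\in\operatorname{ran}A$, i.e. $\sum_K b_K=0$. Here $\sum_K b_K=-\sum_K\int_K\Delta w\,dx=-\int_\Lambda\Delta w\,dx$, and since $\Delta$ is the Neumann--Laplacian (equivalently $w$ satisfies the weak homogeneous Neumann condition) this equals $-(\Delta w,\mathbf 1)_2=(\nabla w,\nabla\mathbf 1)_2=0$. Consequently $Au=b$ is solvable, its solution set being the affine line $u^*+\ker A=\{u^*+t\mathbf 1:t\in\R\}$; the single constraint $\sum_K m_K u_K=\int_\Lambda w\,dx$ then selects exactly one value of $t$ (because $\sum_K m_K=|\Lambda|\neq 0$), yielding both existence and uniqueness.

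Alternatively, and perhaps more transparently, I would run Lax--Milgram on the mean-zero subspace $V_0:=\{v\in\R^{\mathcal{T}}:\sum_K m_K v_K=0\}$: by the discrete Poincaré inequality of Lemma \ref{Lemma PI} one has $\Vert v_h\Vert_2^2\le C_p\,a(v,v)$ on $V_0$, so $a$ is coercive there and furnishes a unique $\bar u\in V_0$ with $a(\bar u,v)=(b,v)$ for all $v\in V_0$, after which a shift by the constant $|\Lambda|^{-1}\int_\Lambda w\,dx$ enforces the mean constraint. I expect the main obstacle to be the passage from this variational identity back to the pointwise cell equations: testing only against $v\in V_0$ gives $Au-b\in V_0^\perp=\operatorname{span}\{(m_K)_K\}$, hence $Au-b=\lambda(m_K)_K$, and one must show $\lambda=0$. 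Pairing with $\mathbf 1$ yields $\lambda|\Lambda|=(Au,\mathbf 1)-(b,\mathbf 1)=-\sum_K b_K=\int_\Lambda\Delta w\,dx$, so $\lambda=0$ is again exactly the Neumann compatibility $\int_\Lambda\Delta w\,dx=0$. The whole argument therefore hinges on the vanishing mean of $\Delta w$, and that is the step I would treat most carefully.
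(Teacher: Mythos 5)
Your proof is correct. Note, however, that the paper does not actually prove Proposition \ref{281123_01} at all: its ``proof'' is a one-line citation to \cite{Flore21} (Definition 6 and Remark 2), so your argument is a genuine, self-contained addition rather than a rederivation. What you do is the standard linear-algebra treatment of the discrete Neumann problem, and all three pillars are in place: the identification $(Au,v)=a(u,v)$ via Remark \ref{DIBP}, the kernel characterisation $\ker A=\operatorname{span}\{\mathbf 1\}$ (which does require the connectedness of the mesh adjacency graph, inherited from the connectedness of $\Lambda$ --- worth a sentence, but standard), and the compatibility condition $\sum_K b_K=-\int_\Lambda\Delta w\,dx=0$. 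You are right that this last point is the crux, and it deserves the emphasis you give it: as stated, the proposition only assumes $w\in H^2(\Lambda)$, and for a general such $w$ with $\Delta$ read as the classical Laplacian one would get $\int_\Lambda\Delta w\,dx=\int_{\partial\Lambda}\nabla w\cdot\mathbf n\,dS\neq 0$ and the system would be \emph{unsolvable} (summing the cell equations over $K$ kills the left-hand side by antisymmetry of the fluxes). The statement is therefore only consistent if $\Delta$ is understood as the Neumann Laplacian, i.e.\ if $w$ satisfies the weak homogeneous Neumann condition --- which is exactly how the proposition is used later (it is applied to $v^n$ and to $u$, both of which satisfy that condition). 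Your Lax--Milgram variant on the mean-zero subspace, including the verification that the Lagrange multiplier $\lambda$ vanishes for the same compatibility reason, is also correct and buys a coercivity constant via the discrete Poincar\'e inequality, which is the form of the argument one would want if one went on to quantify stability of the elliptic projection.
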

\begin{proof}
See e.g. \cite{Flore21}, Definition 6 and Remark 2.
\end{proof}
\begin{defn}\label{Definition parabolic projection}
Let $\mathcal{T}$ be an admissible mesh and $w \in H^2(\Lambda)$. The elliptic projection of $w$ on $\Tau$ is given by
\begin{align*}
\tilde{w}:= \sum\limits_{K \in \mathcal{T}} \tilde{w}_K \mathds{1}_K,
\end{align*}
where $(\tilde{w}_K)_{K \in \mathcal{T}}$ are given by Proposition \ref{281123_01}. The centered projection of $w$ on $\Tau$ is given by
\begin{align*}
\hat{w}:= \sum\limits_{K \in \mathcal{T}} w(x_K) \mathds{1}_K.
\end{align*}
\end{defn}
\begin{lem}\label{Lemma 260923_01}
Let $\Tau_h$ be an admissible mesh with $\operatorname{size}(\mathcal{T}_h)=h$ and $w$ a random variable with values in $H^2(\Lambda)$.\\
Moreover, let $\tilde{w}_h$ be the elliptic projection of $w$ a.s. in $\Omega$. Then there exists a constant $K_7>0$ only depending on $\Lambda$ and $\operatorname{reg}(\Tau_h)$ such that
\begin{align*}
\Vert w - \tilde{w}_h \Vert_2^2 \leq K_7 h^2 \Vert w \Vert_{H^2(\Lambda)}^2 ~~~ \mathbb{P}\text{-a.s. in}~\Omega.
\end{align*}
Furthermore, let $\hat{w}_h$ be the centered projection of $w$ a.s. in $\Omega$. Then there exists a constant $K_8>0$ only depending on $\Lambda$ and $\operatorname{reg}(\Tau_h)$ such that
\begin{align*}
\Vert \hat{w}_h - \tilde{w}_h \Vert_2^2 \leq K_8 h^2 \Vert w \Vert_{H^2(\Lambda)}^2 ~~~ \mathbb{P}\text{-a.s. in}~\Omega.
\end{align*}
If \eqref{mesh_regularity} is satisfied, $K_7$ and $K_8$ may depend on $\chi$ and the dependence of $K_7$ and $K_8$ on $\operatorname{reg}(\Tau_h)$ can be omitted.
\end{lem}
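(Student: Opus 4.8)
The plan is to establish the second estimate first and then deduce the first one by the triangle inequality $\Vert w-\tilde w_h\Vert_2\le\Vert w-\hat w_h\Vert_2+\Vert\hat w_h-\tilde w_h\Vert_2$, treating the centered projection error $\Vert w-\hat w_h\Vert_2$ separately. For the second estimate I set $\xi_K:=w(x_K)-\tilde w_K$ and $\xi_h:=\sum_{K\in\Tau_h}\xi_K\mathds{1}_K=\hat w_h-\tilde w_h$. Applying the discrete integration by parts of Remark \ref{DIBP} with both sequences equal to $(\xi_K)_{K\in\Tau_h}$ gives
\[|\xi_h|_{1,h}^2=\sum_{K\in\Tau_h}\xi_K\sum_{\sigma\in\mathcal E_K^{\operatorname{int}}}\frac{m_\sigma}{\dkl}(\xi_K-\xi_L).\]
Splitting $\xi=\hat w-\tilde w$ and inserting the defining relation $\sum_{\sigma\in\mathcal E_K^{\operatorname{int}}}\frac{m_\sigma}{\dkl}(\tilde w_K-\tilde w_L)=-\int_K\Delta w\,dx$ from Proposition \ref{281123_01}, the inner sum becomes $\sum_{\sigma\in\mathcal E_K^{\operatorname{int}}}\frac{m_\sigma}{\dkl}(w(x_K)-w(x_L))+\int_K\Delta w\,dx$. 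Writing $\int_K\Delta w\,dx=\sum_{\sigma\in\mathcal E_K}\int_\sigma\nabla w\cdot\mathbf n_{K,\sigma}\,dS$ by the divergence theorem and using that the boundary fluxes vanish for $\sigma\in\mathcal E_K\cap\mathcal E_{\operatorname{ext}}$ because $w$ satisfies the weak homogeneous Neumann condition (which is the relevant situation, $w$ being $u(t)$ or $v^n$), the inner sum equals $\sum_{\sigma\in\mathcal E_K^{\operatorname{int}}}G_{K,\sigma}$ with the antisymmetric two-point flux consistency error $G_{K,\sigma}:=\frac{m_\sigma}{\dkl}(w(x_K)-w(x_L))+\int_\sigma\nabla w\cdot\mathbf n_{K,\sigma}\,dS$.

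Reindexing over interior edges (each oriented from $K$ to $L$) and using the discrete Cauchy--Schwarz inequality yields
\[|\xi_h|_{1,h}^2=\sum_{\sigma\in\edgesint}(\xi_K-\xi_L)\,G_{K,\sigma}\le|\xi_h|_{1,h}\Big(\sum_{\sigma\in\edgesint}\frac{\dkl}{m_\sigma}G_{K,\sigma}^2\Big)^{1/2}.\]
The heart of the argument is the weak consistency estimate $\sum_{\sigma\in\edgesint}\frac{\dkl}{m_\sigma}G_{K,\sigma}^2\le C h^2\Vert w\Vert_{H^2(\Lambda)}^2$ with $C=C(\Lambda,\operatorname{reg}(\Tau_h))$, the classical property of the TPFA flux for $H^2$-data (cf. \cite{EGH00,Flore21}): on the diamond around each $\sigma$ one exploits the orthogonality of $[x_K,x_L]$ to $\sigma$ to compare the difference quotient $\dkl^{-1}(w(x_L)-w(x_K))$ with the averaged normal derivative $m_\sigma^{-1}\int_\sigma\nabla w\cdot\mathbf n_{K,\sigma}\,dS$, the error being $h$ times a local $H^2$-seminorm. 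This gives $|\xi_h|_{1,h}^2\le Ch^2\Vert w\Vert_{H^2(\Lambda)}^2$. Passing to the $L^2$-norm by the discrete Poincar\'e inequality (Lemma \ref{Lemma PI}), the mean term is harmless: the average-matching constraint $\sum_K m_K\tilde w_K=\int_\Lambda w\,dx$ gives $\int_\Lambda\xi_h\,dx=\int_\Lambda(\hat w_h-w)\,dx$, bounded by $|\Lambda|^{1/2}\Vert\hat w_h-w\Vert_2$. This proves the second estimate with $K_8=K_8(\Lambda,\operatorname{reg}(\Tau_h))$.

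It remains to bound the centered projection error $\Vert w-\hat w_h\Vert_2^2=\sum_{K\in\Tau_h}\int_K|w(x)-w(x_K)|^2\,dx$, which also closes the mean term above. On each convex control volume $K$ I would combine the Poincar\'e--Wirtinger inequality with a bound for $|w(x_K)-m_K^{-1}\int_Kw\,dx|$; point values are meaningful since $H^2(\Lambda)\hookrightarrow\mathcal C(\overline\Lambda)$ for $d\le3$, and the $H^2$-regularity together with $\operatorname{reg}(\Tau_h)$ yields the local estimate $\int_K|w(x)-w(x_K)|^2\,dx\le Ch^2\Vert w\Vert_{H^2(K)}^2$, exactly the type of local approximation estimate used in the Remark following Theorem \ref{main theorem}. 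Summation over $K$ gives $\Vert w-\hat w_h\Vert_2^2\le Ch^2\Vert w\Vert_{H^2(\Lambda)}^2$, and the triangle inequality then yields the first estimate with $K_7=K_7(\Lambda,\operatorname{reg}(\Tau_h))$. If \eqref{mesh_regularity} holds, every geometric constant depends on the mesh only through $\chi$, so the dependence on $\operatorname{reg}(\Tau_h)$ may be dropped.

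The main obstacle is the weak TPFA consistency estimate for merely $H^2$ (rather than $\mathcal C^2$) data, together with the careful bookkeeping ensuring that all geometric constants enter only through $\operatorname{reg}(\Tau_h)$ (equivalently $\chi$); once the embedding $H^2\hookrightarrow\mathcal C$ for $d\le3$ is invoked, the centered-projection and mean-value estimates reduce to routine local computations.
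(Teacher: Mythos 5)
Your proposal is correct and follows essentially the same route as the paper: bound the centered-projection error $\Vert w-\hat w_h\Vert_2$ by the local Taylor-type estimate, control $\Vert\hat w_h-\tilde w_h\Vert_2$ via the discrete Poincar\'e inequality together with the mean-matching constraint of the elliptic projection and the TPFA $H^1$-seminorm consistency bound, and conclude by the triangle inequality. The only difference is that you spell out the weak flux-consistency estimate (discrete integration by parts plus the diamond-cell argument) which the paper simply defers to \cite{EGH00}, Section 3.2.3, and you correctly note in passing that the vanishing of the boundary fluxes uses the weak Neumann condition on $w$, which holds in every application of the lemma.
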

\begin{proof}
First, from Lemma \ref{Lemma 240124_01} we observe that there exists a constant $\tilde{K}>0$ only depending on $\Lambda$ such that
\begin{align*}
\Vert w- \hat{w}_h \Vert_2^2 \leq \tilde{K} h^2 \Vert w \Vert_{H^2(\Lambda)}^2 ~~~ \mathbb{P}\text{-a.s. in}~\Omega.
\end{align*}
Moreover, by Definition of the elliptic projection we have
\begin{align*}
\int_{\Lambda} \hat{w}_h - \tilde{w}_h \, dx = \int_{\Lambda} \hat{w}_h \, dx - \sum\limits_{K \in \mathcal{T}_h} \tilde{w}_K = \int_{\Lambda} \hat{w}_h - w \, dx.
\end{align*}
Therefore, H\"{o}lder inequality and Lemma \ref{Lemma 240124_01} yield 
\begin{align}\label{250115_01}
\bigg( \int_{\Lambda} \hat{w}_h - \tilde{w}_h \, dx \bigg)^2 \leq |\Lambda| \int_{\Lambda} |\hat{w}_h - w|^2 \, dx \leq |\Lambda|C(2,\Lambda) h^2 \Vert w \Vert_{H^2(\Lambda)}^2 ~~~ \mathbb{P}\text{-a.s. in}~\Omega.
\end{align}
Now, from \eqref{291123_01} and \eqref{250115_01} we have
\begin{align*}
\Vert \hat{w}_h - \tilde{w}_h \Vert_2^2 &\leq C_p | \hat{w}_h - \tilde{w}_h |_{{1, h}}^2 + 2 |\Lambda|^{-1} \bigg( \int_{\Lambda} \hat{w}_h - \tilde{w}_h \, dx \bigg)^2 \\
&\leq C_p | \hat{w}_h - \tilde{w}_h |_{{1, h}}^2 + 2 C(2,\Lambda) h^2 \Vert w \Vert_{H^2(\Lambda)}^2.
\end{align*}
Hence, it is left to show that there exists a constant $\overline{K}>0$ only depending on $\Lambda$ such that
\begin{align*}
| \hat{w}_h - \tilde{w}_h |_{{1, h}}^2 \leq \overline{K} \operatorname{reg}(\Tau_h)^2 h^2 \Vert w \Vert_{H^2(\Lambda)}^2.
\end{align*}
You may obtain this result by proceeding similarly as in \cite{EGH00}, Section 3.2.3.
\end{proof}
\begin{cor}\label{Corollary 260923_01}
For any $N \in \mathbb{N}$ let $v^n$, $n \in \{1,...,N\}$ be the solution to \eqref{ES} and $\tilde{v}_h^n$ the elliptic projection of $v^n$, a.s. in $\Omega$. Then there exists a constant $K_9>0$ such that
\begin{align*}
\sup_{n\in \{1,...,N\}} \mathbb{E} \Vert v^n - \tilde{v}_h^n \Vert_2^2 \leq K_9 h^2.
\end{align*}
Especially, we have
\begin{align*}
\tau \sum\limits_{n=1}^{N} \mathbb{E} \Vert v^n - \tilde{v}_h^n \Vert_2^2 \leq K_9 T h^2.
\end{align*}
\end{cor}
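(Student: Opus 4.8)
The plan is to reduce the claim to a pointwise (in $\omega$) application of the elliptic projection error estimate from Lemma~\ref{Lemma 260923_01} and then to take the expectation, controlling the resulting $H^2$-norm by the a priori bounds already established for the semi-implicit Euler scheme. This is essentially a bookkeeping argument: no new estimate is needed, only the combination of existing ones with a uniform constant.

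First I would observe that, as recorded in the discussion following \eqref{ES} (via Grisvard's regularity result), the solution $v^n$ satisfies $v^n \in H^2(\Lambda)$ $\mathbb{P}$-a.s.\ and in fact $v^n \in L^2(\Omega; H^2(\Lambda))$, and that it fulfils the weak homogeneous Neumann boundary condition. Hence Lemma~\ref{Lemma 260923_01} may be invoked $\omega$-wise with the random variable $w = v^n$, giving
$$
\Vert v^n - \tilde{v}_h^n \Vert_2^2 \leq K_7 h^2 \Vert v^n \Vert_{H^2(\Lambda)}^2 \quad \mathbb{P}\text{-a.s. in}~\Omega
$$
for each $n \in \{1,\ldots,N\}$. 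Taking the expectation and applying the $H^2$-regularity inequality \eqref{eq 041023_02}, I obtain
$$
\mathbb{E} \Vert v^n - \tilde{v}_h^n \Vert_2^2 \leq K_7 h^2\, \mathbb{E}\Vert v^n\Vert_{H^2(\Lambda)}^2 \leq C K_7 h^2 \big(\mathbb{E}\Vert \Delta v^n\Vert_2^2 + \mathbb{E}\Vert v^n\Vert_2^2\big).
$$

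The two terms on the right are bounded uniformly in $n$ and $N$: Lemma~\ref{Lemma 250923_01} gives $\sup_n \mathbb{E}\Vert v^n\Vert_2^2 \leq K_1$, and Lemma~\ref{Lemma 250923_03} — whose hypothesis $v^0 = u_0 \in L^2(\Omega; H^2(\Lambda))$ is guaranteed by $(R2)$ — gives $\sup_n \mathbb{E}\Vert \Delta v^n\Vert_2^2 \leq K_3$. Setting $K_9 := C K_7 (K_1 + K_3)$ then yields the first estimate with $K_9$ independent of $n$, $N$ and $h$. The second ("especially") assertion is immediate: summing the uniform bound over $n = 1,\ldots,N$ and using $\tau N = T$ gives $\tau \sum_{n=1}^N \mathbb{E}\Vert v^n - \tilde{v}_h^n\Vert_2^2 \leq \tau N K_9 h^2 = K_9 T h^2$.

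There is no genuine obstacle in this argument; the only two points that require care are that Lemma~\ref{Lemma 260923_01} is applied pathwise before taking the expectation (so that measurability of $v^n$ with values in $H^2(\Lambda)$, noted after \eqref{ES}, legitimises the subsequent integration in $\omega$), and that the constants $K_1$, $K_3$, $K_7$ are genuinely uniform in $n$, $N$ and $h$ — which is exactly what the cited lemmas provide. I would close by emphasising that $K_9$ inherits the same dependence on $\operatorname{reg}(\Tau_h)$ as $K_7$, and may be taken to depend on $\chi$ instead under \eqref{mesh_regularity}.
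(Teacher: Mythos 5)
Your proposal is correct and follows exactly the paper's own argument: apply Lemma \ref{Lemma 260923_01} pathwise, take expectations, bound $\mathbb{E}\Vert v^n\Vert_{H^2(\Lambda)}^2$ via \eqref{eq 041023_02} together with the uniform bounds $K_1$ and $K_3$ from Lemma \ref{Lemma 250923_01} and Lemma \ref{Lemma 250923_03}, and then sum over $n$. The only difference is that you spell out the measurability and boundary-condition prerequisites for invoking the projection estimate, which the paper leaves implicit.
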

\begin{proof}
From Lemma \ref{Lemma 260923_01} and \eqref{eq 041023_02} it follows that for all $n=1,...,N$ we have
\begin{align*}
\mathbb{E} \Vert v^n - \tilde{v}_h^n \Vert_2^2 \leq K_7 h^2 \mathbb{E} \Vert v^n \Vert_{H^2(\Lambda)}^2 \leq K_7 C h^2 \mathbb{E} (\Vert \Delta v^n \Vert_2^2 + \Vert v^n \Vert_2^2).
\end{align*}
Thus, from Lemma \ref{Lemma 250923_01} and Lemma \ref{Lemma 250923_03} we get
\begin{align*}
\mathbb{E} \Vert v^n - \tilde{v}_h^n \Vert_2^2 \leq K_7 C (K_3 + K_1) h^2.
\end{align*}
Multiplying this inequality with $\tau$ and summing over $n=1,...,N$ yields the result.
\end{proof}

\begin{lem}\label{Lemma 270923_01}
	For any $N \in \mathbb{N}$ let $v^n$, $n \in \{1,...,N\}$ be the solution to \eqref{ES}. Let $\tilde{v}_h^n$ be the elliptic projection of $v^n$. Then there exists a constant $K_{10}>0$ not depending on $N,n,h$ such that
	\begin{align*}
		\sup_{n\in \{1,...,N\}} \mathbb{E} \Vert \tilde{v}_h^n - u_h^n \Vert_2^2 \leq K_{10}(h^2 + \frac{h^2}{\tau}).
	\end{align*}
\end{lem}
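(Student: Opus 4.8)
The plan is to run a discrete energy estimate for the error $e_K^n := \tilde v_K^n - u_K^n$ between the components of the elliptic projection of the Euler iterate and the finite volume unknown. The decisive auxiliary quantity is the discrepancy $\eta_K^n := \frac{1}{m_K}\int_K v^n\,dx - \tilde v_K^n$ between the cell average of $v^n$ and its elliptic projection, which does not vanish because the mass-conservation constraint in Proposition \ref{281123_01} only fixes the total mass, not the individual cell averages.

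First I would integrate the first line of \eqref{ES} over each control volume $K$ and insert the defining identity of the elliptic projection from Proposition \ref{281123_01}, namely $-\int_K \Delta v^n\,dx = \sum_{\sigma\in\mathcal E_K^{int}} \frac{m_\sigma}{d_{K|L}}(\tilde v_K^n - \tilde v_L^n)$, to rewrite the integrated Euler scheme in a finite-volume form whose diffusive part is carried by $\tilde v_h^n$. Subtracting \eqref{FVS} then yields, for every $K$,
\begin{align*}
m_K(e_K^n - e_K^{n-1}) + \tau \sum_{\sigma\in\mathcal E_K^{int}} \frac{m_\sigma}{d_{K|L}}(e_K^n - e_L^n) = \Delta_n W\,G_K^{n-1} - m_K(\eta_K^n - \eta_K^{n-1}),
\end{align*}
where $G_K^{n-1} := \int_K g(v^{n-1})\,dx - m_K g(u_K^{n-1})$ is $\mathcal F_{t_{n-1}}$-measurable. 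Testing with $e_K^n$, summing over $K$ and applying the discrete integration-by-parts rule of Remark \ref{DIBP} to the diffusive part gives the identity $\tfrac12(\|e_h^n\|_2^2 - \|e_h^{n-1}\|_2^2 + \|e_h^n - e_h^{n-1}\|_2^2) + \tau |e_h^n|_{1,h}^2 = A + B$, where $A$ collects the stochastic contribution and $B = -\sum_K m_K(\eta_K^n - \eta_K^{n-1})e_K^n$.

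To treat $A$ after taking expectation I would split $e_K^n = (e_K^n - e_K^{n-1}) + e_K^{n-1}$; the part pairing $\Delta_n W$ with the $\mathcal F_{t_{n-1}}$-measurable factor $G_K^{n-1}e_K^{n-1}$ has zero expectation, and the remaining part is controlled by Young's inequality and the It\^o isometry, producing $\tfrac14\mathbb E\|e_h^n - e_h^{n-1}\|_2^2 + \tau\,\mathbb E\sum_K m_K^{-1}(G_K^{n-1})^2$. Splitting $g(v^{n-1}) - g(u_K^{n-1})$ through $g(\tilde v_K^{n-1})$ and using the Lipschitz bound $\|g'\|_\infty$ together with Corollary \ref{Corollary 260923_01} bounds the last expectation by $C(\mathbb E\|e_h^{n-1}\|_2^2 + h^2)$, so this term is of Gronwall type $C\tau\,\mathbb E\|e_h^{n-1}\|_2^2$ plus an $O(\tau h^2)$ consistency error.

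The term $B$ is the main obstacle and the source of the $h^2/\tau$ loss. Splitting again $e_h^n = e_h^{n-1} + (e_h^n - e_h^{n-1})$, the part paired with $e_h^n - e_h^{n-1}$ is absorbed by the $\tfrac12\|e_h^n - e_h^{n-1}\|_2^2$ on the left, while the part paired with $e_h^{n-1}$ must be estimated by Young's inequality with weight $\tau$, giving $\tfrac{\tau}{2}\mathbb E\|e_h^{n-1}\|_2^2 + \tfrac{1}{2\tau}\mathbb E\|\eta_h^n - \eta_h^{n-1}\|_2^2$. Since both the cell-average and the elliptic projections are linear, $\eta_h^n - \eta_h^{n-1}$ is exactly the discrepancy associated with $v^n - v^{n-1}$, so Lemma \ref{Lemma 260923_01} combined with the $L^2$-estimate of the cell average used in the Remark after Theorem \ref{main theorem} yields $\mathbb E\|\eta_h^n - \eta_h^{n-1}\|_2^2 \leq Ch^2\,\mathbb E\|v^n - v^{n-1}\|_{H^2}^2$. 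Summing over $n$ and invoking the $H^2$-regularity estimate \eqref{eq 041023_02} together with Lemma \ref{Lemma 250923_01} and Lemma \ref{Lemma 250923_03} bounds $\sum_n \mathbb E\|v^n - v^{n-1}\|_{H^2}^2$ by a constant, whence $\tfrac{1}{2\tau}\sum_n \mathbb E\|\eta_h^n - \eta_h^{n-1}\|_2^2 \leq C h^2/\tau$, which is precisely where the factor $\tau^{-1}$ appears. Finally, summing the energy identity over $n=1,\dots,m$, observing that $e_h^0$ is itself the projection discrepancy of $u_0$ so that $\mathbb E\|e_h^0\|_2^2 \leq Ch^2$, and applying the discrete Gronwall inequality of Lemma \ref{Gronwall} to the collected $\tau\,\mathbb E\|e_h^{n-1}\|_2^2$ terms gives $\sup_n \mathbb E\|e_h^n\|_2^2 \leq K_{10}(h^2 + h^2/\tau)$.
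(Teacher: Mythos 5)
Your proposal is correct and follows the same overall strategy as the paper: an energy estimate for $\dot e_h^n=\tilde v_h^n-u_h^n$, with the diffusive part handled exactly via the defining property of the elliptic projection, the noise term split through $g(\tilde v_K^{n-1})$ and controlled by It\^o isometry, Corollary \ref{Corollary 260923_01} and Gronwall, and the $h^2/\tau$ loss traced to the same consistency term (your $B$ is, after integration against the piecewise constant $\dot e_h^n$, identical to the paper's $I=(\overline e_h^n-\overline e_h^{n-1},\dot e_h^n)_2$, since $\int_K(v^n-\tilde v_K^n)\,dx=m_K\eta_K^n$). The one genuine difference is how that term is estimated. The paper exploits the mass constraint $\int_\Lambda \overline e_h^n\,dx=0$ to subtract the mean of $\dot e_h^n$, invokes the discrete Poincar\'e inequality \eqref{291123_01} to replace $\Vert\dot e_h^n\Vert_2$ by $\sqrt{C_p}\,|\dot e_h^n|_{1,h}$, and then absorbs $\tfrac{\tau}{2}|\dot e_h^n|_{1,h}^2$ into the diffusion term $\tau|\dot e_h^n|_{1,h}^2$ on the left. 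You instead split $e_h^n=e_h^{n-1}+(e_h^n-e_h^{n-1})$, absorb the increment part into $\Vert e_h^n-e_h^{n-1}\Vert_2^2$, and feed $\tfrac{\tau}{2}\Vert e_h^{n-1}\Vert_2^2$ into the Gronwall sum; this avoids both the zero-mean observation and the discrete Poincar\'e inequality for this step, at the harmless cost of an extra $O(\tau)$ coefficient in Gronwall. Both routes give $\sum_n\Vert\eta_h^n-\eta_h^{n-1}\Vert_2^2/\tau\lesssim h^2\tau^{-1}\sum_n\mathbb{E}\Vert v^n-v^{n-1}\Vert_{H^2}^2$, bounded via Lemmas \ref{Lemma 250923_01}, \ref{Lemma 250923_03} and \ref{Theorem 110124_01}, so the rate is unchanged. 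Only a bookkeeping caveat: the two absorptions into $\Vert e_h^n-e_h^{n-1}\Vert_2^2$ (from $A$ and from the increment part of $B$) should use Young weights summing to strictly less than $\tfrac12$ if you want to retain that term on the left, though dropping it entirely is also fine for the stated conclusion.
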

\begin{proof}
	We divide \eqref{FVS} by $m_K$ and subtract this equality from \eqref{ES} and get
	\begin{align*}
		&\bigg( v^n - u_K^n - (v^{n-1} - u_K^{n-1}) \bigg) - \tau \bigg( \Delta v^n + \frac{1}{m_K} \sum\limits_{\sigma \in \mathcal{E}_K^{int}} \frac{m_{\sigma}}{d_{K|L}} (u_K^n - u_L^n) \bigg) \\
		&= \bigg( g(v^{n-1}) - g(u_K^{n-1}) \bigg) \Delta_n W ~~~\text{on}~K.
	\end{align*}
	Adding and subtracting $\tilde{v}_K^n - \tilde{v}_K^{n-1}$ in the first large brackets, then multiplying with $\tilde{v}_K^n - u_K^n$ and integrating over $K$ yields
	\begin{align*}
		I_K + II_K + III_K = IV_K,
	\end{align*}
	where using the definition of the elliptic projection, we obtain
	\begin{align*}
		I_K &= \int_K \bigg(v^n - \tilde{v}_K^n - (v^{n-1} - \tilde{v}_K^{n-1})\, \bigg) \, dx \,  ( \tilde{v}_K^n - u_K^n) \\
		II_K &= \frac{1}{2} \bigg( \Vert \tilde{v}_K^n - u_K^n \Vert_{L^2(K)}^2 - \Vert \tilde{v}_K^{n-1} - u_K^{n-1} \Vert_{L^2(K)}^2 + \Vert \tilde{v}_K^n - u_K^n - (\tilde{v}_K^{n-1} - u_K^{n-1}) \Vert_{L^2(K)}^2 \bigg) \\
		III_K &= -\tau\bigg( \int_K \Delta v^n \, dx + \sum\limits_{\sigma \in \mathcal{E}_K^{int}} \frac{m_{\sigma}}{d_{K|L}} (u_K^n - u_L^n)\bigg) (\tilde{v}_K^n - u_K^n) \\
		&= + \tau\bigg( \sum\limits_{\sigma \in \mathcal{E}_K^{int}} \frac{m_{\sigma}}{d_{K|L}} (\tilde{v}_K^n - u_K^n - (\tilde{v}_L^n - u_L^n))\bigg)(\tilde{v}_K^n - u_K^n) \\
		IV_K &= \int_K (g(v^{n-1}) - g(\tilde{v}_K^{n-1})) ( \tilde{v}_K^n - u_K^n)  \Delta_n W \, dx + \int_K (g(\tilde{v}_K^{n-1}) - g(u_K^{n-1})) ( \tilde{v}_K^n - u_K^n)  \Delta_n W \, dx.
	\end{align*}
	Summing over $K \in \mathcal{T}$ and using the abbreviations $\dot{e}_h^n:= \tilde{v}_h^n - u_h^n$ and $\overline{e}_h^n:= v^n - \tilde{v}_h^n$ yield
	\begin{align}\label{eq 260923_01}
		I + II + III= IV,
	\end{align}
	where, rewriting $III$ by using the discrete integration by parts rule (see Remark \ref{DIBP}), we have
	\begin{align*}
		I &= \int_{\Lambda} (\overline{e}_h^n - \overline{e}_h^{n-1}) \dot{e}_h^n= (\overline{e}_h^n - \overline{e}_h^{n-1}, \dot{e}_h^n)_2 \\
		II &= \frac{1}{2} \bigg( \Vert \dot{e}_h^n \Vert_2^2 - \Vert \dot{e}_h^{n-1} \Vert_2^2 + \Vert \dot{e}_h^n - \dot{e}_h^{n-1}\Vert_2^2 \bigg) \\
		III &= \tau \sum\limits_{\sigma \in \mathcal{E}_{int}} \frac{m_{\sigma}}{d_{K|L}} \bigg(\tilde{v}_K^n - u_K^n - (\tilde{v}_L^n - u_L^n)\bigg)^2= \tau |\tilde{v}_h^n - u_h^n|_{1,h}^2 = \tau |\dot{e}_h^n|_{1,h}^2\\
		IV &= \int_{\Lambda} (g(v^{n-1}) - g(\tilde{v}_h^{n-1})) \dot{e}_h^n  \Delta_n W \, dx + \int_{\Lambda} (g(\tilde{v}_h^{n-1}) - g(u_h^{n-1}))\dot{e}_h^n  \Delta_n W \, dx.
	\end{align*}
	Taking the expectation in $IV$ and applying Young's inequality yields
	\begin{align*}
		\mathbb{E}(IV) &\leq 2\tau \mathbb{E} \Vert g(v^{n-1}) - g(\tilde{v}_h^{n-1})\Vert_2^2 + \frac{1}{8} \mathbb{E} \Vert \dot{e}_h^n - \dot{e}_h^{n-1}\Vert_2^2 \\
		&+ 2\tau \mathbb{E} \Vert g(\tilde{v}_h^{n-1}) - g(u_h^{n-1})\Vert_2^2 + \frac{1}{8} \mathbb{E} \Vert \dot{e}_h^n - \dot{e}_h^{n-1}\Vert_2^2.
	\end{align*}
	Hence, taking expectation in \eqref{eq 260923_01} and using the boundedness of $g'$ yields
	\begin{align*}
		&\mathbb{E} (\overline{e}_h^n - \overline{e}_h^{n-1}, \dot{e}_h^n)_2 + \frac{1}{2} \mathbb{E} (\Vert \dot{e}_h^n \Vert_2^2 - \Vert \dot{e}_h^{n-1} \Vert_2^2) + \frac{1}{4}  \mathbb{E} \Vert \dot{e}_h^n - \dot{e}_h^{n-1}\Vert_2^2 + \tau \mathbb{E} |\dot{e}_h^n|_{1,h}^2 \\
		&\leq 2 \Vert g' \Vert_{\infty}^2 \tau \mathbb{E} \Vert \overline{e}_h^{n-1} \Vert_2^2 + 2 \Vert g' \Vert_{\infty}^2 \tau \mathbb{E} \Vert \dot{e}_h^{n-1} \Vert_2^2.
	\end{align*}
	We sum over $n=1,...,m$ for $m \in \{1,...,N\}$ and obtain
	\begin{align}\label{eq 260923_02}
		&\sum\limits_{n=1}^m \mathbb{E} (\overline{e}_h^n - \overline{e}_h^{n-1}, \dot{e}_h^n)_2 + \frac{1}{2} \mathbb{E} \Vert \dot{e}_h^m \Vert_2^2 + \frac{1}{4}  \sum\limits_{n=1}^m \mathbb{E} \Vert \dot{e}_h^n - \dot{e}_h^{n-1}\Vert_2^2 + \tau \sum\limits_{n=1}^m\mathbb{E} |\dot{e}_h^n|_{1,h}^2 \\
		&\leq \frac{1}{2} \mathbb{E} \Vert \dot{e}_h^0 \Vert_2^2 + 2 \Vert g' \Vert_{\infty}^2 \tau \sum\limits_{n=1}^m \mathbb{E} \Vert \overline{e}_h^{n-1} \Vert_2^2 + 2 \Vert g' \Vert_{\infty}^2 \tau\sum\limits_{n=1}^m \mathbb{E} \Vert \dot{e}_h^{n-1} \Vert_2^2. \notag
	\end{align}
	Being $(\hat{u_0})_h$ the centered projection of $u_0=v^0$, Lemma \ref{Lemma 260923_01} and Lemma \ref{Lemma 240124_01} yield
	\begin{align*}
		&\Vert \dot{e}_h^0 \Vert_2^2 = \Vert \tilde{v}_h^0 - u_h^0 \Vert_2^2 \leq 2 \Vert \tilde{v}_h^0 - (\hat{u_0})_h \Vert_2^2 + 2 \Vert (\hat{u_0})_h - u_h^0 \Vert_2^2 \notag \\
		&\leq 2K_8h^2 \Vert u_0 \Vert_{H^2(\Lambda)}^2+  2 \sum\limits_{K \in \mathcal{T}} \int_K |u_0(x_K) - \frac{1}{m_K} \int_K u_0(y) \, dy |^2 \, dx \notag \\
		&=  2K_8h^2 \Vert u_0 \Vert_{H^2(\Lambda)}^2 +  2 \sum\limits_{K \in \mathcal{T}} \frac{1}{m_K} | \int_K u_0(x_K) -  u_0(y) \, dy |^2 \\
		&\leq  2K_8h^2 \Vert u_0 \Vert_{H^2(\Lambda)}^2+  2 \sum\limits_{K \in \mathcal{T}} C(2,\Lambda) h^2 \Vert u_0 \Vert_{H^2(K)}^2 \notag \\
		&= 2(K_8+ 2 C(2,\Lambda)) h^2 \Vert u_0 \Vert_{H^2(\Lambda)}^2 \notag
	\end{align*}
	a.s. in $\Omega$. Hence
	\begin{align}\label{eq 260923_03}
		\E \Vert \dot{e}_h^0 \Vert_2^2 &\leq \E \bigg( 2(K_8+ 2 C(2,\Lambda)) h^2 \Vert u_0 \Vert_{H^2(\Lambda)}^2 \bigg) \\
		&= K_{11} h^2 \notag
	\end{align}
	for $K_{11}:= 2(K_8+ 2 C(2,\Lambda)) \E \Vert u_0 \Vert_{H^2(\Lambda)}^2$.
	We note that $\int_{\Lambda} \overline{e}_h^n \, dx = \int_{\Lambda} v_h^n - \tilde{v}_h^n \, dx= 0$ for $n=0,...,N$, hence $\int_{\Lambda} (\overline{e}_h^n - \overline{e}_h^{n-1}) \cdot c \, dx =0 $ for all $n=1,...,N$ and $c \in \mathbb{R}$ by the definition of $\tilde{v}_h^n$. Then, setting $c = \frac{1}{|\Lambda|} \int_{\Lambda} \dot{e}_h^n \, dx$, using Cauchy-Schwarz inequality and \eqref{291123_01} we get
	\begin{align*}
		|\mathbb{E} (\overline{e}_h^n - \overline{e}_h^{n-1}, \dot{e}_h^n)_2| &= |\mathbb{E}  (\overline{e}_h^n - \overline{e}_h^{n-1}, \dot{e}_h^n - c)_2| \\
  &\leq \erww{\Vert \overline{e}_h^n - \overline{e}_h^{n-1}\Vert_2 \Vert \dot{e}_h^n -c \Vert_2} \\
  &\leq \erww{\Vert \overline{e}_h^n - \overline{e}_h^{n-1}\Vert_2 \bigg( C_p| \dot{e}_h^n -c |^2_{1,h} + 2|\Lambda|^{-1} \big( \int_{\Lambda} \dot{e}_h^n -c \, dx \big)^2 \bigg)^{\frac{1}{2}}} \\
  &= \sqrt{C_p} \erww{\Vert \overline{e}_h^n - \overline{e}_h^{n-1}\Vert_2 | \dot{e}_h^n -c |_{1,h}} \\
  &= \sqrt{C_p} \erww{ \Vert \overline{e}_h^n - \overline{e}_h^{n-1}\Vert_2 | \dot{e}_h^n |_{1,h}}.
	\end{align*}

 
	Now, using Young's inequality, Lemma \ref{Lemma 260923_01}, Lemma \ref{Theorem 110124_01} and the fact that $\tilde{v}_h^n - \tilde{v}_h^{n-1}$ is the elliptic projection of $v^n - v^{n-1}$ we obtain
	\begin{align}\label{eq 260923_04}
		\begin{aligned}
			&|\mathbb{E} (\overline{e}_h^n - \overline{e}_h^{n-1}, \dot{e}_h^n)_2| \\
			&\leq \frac{C_p}{2\tau} \mathbb{E} \Vert \overline{e}_h^n - \overline{e}_h^{n-1}\Vert_2^2 + \frac{\tau}{2} \mathbb{E} | \dot{e}_h^n |_{1,h}^2\\
			&= \frac{C_p}{2\tau} \E \Vert v^n - \tilde{v}_h^n - (v^{n-1} - \tilde{v}_h^{n-1}) \Vert_2^2 + \frac{\tau}{2} \mathbb{E} | \dot{e}_h^n |_{1,h}^2\\
			&= \frac{C_p}{2\tau} \E \Vert (v^n - v^{n-1})- (\tilde{v}_h^n - \tilde{v}_h^{n-1}) \Vert_2^2  + \frac{\tau}{2} \mathbb{E} | \dot{e}_h^n |_{1,h}^2 \\
			&\leq \frac{K_7h^2C_p}{2\tau} \E \Vert v^n - v^{n-1} \Vert_{H^2(\Lambda)}^2 + \frac{\tau}{2} \mathbb{E} | \dot{e}_h^n |_{1,h}^2 \\
			&\leq \frac{6K_7h^2C_p}{\tau} \E \bigg(\Vert v^n - v^{n-1} \Vert_2^2 + \Vert \Delta v^n - \Delta v^{n-1} \Vert_2^2 \bigg) + \frac{\tau}{2} \mathbb{E} | \dot{e}_h^n |_{1,h}^2.
		\end{aligned}
	\end{align}
	Summing over $n=1,...,m$ for $m \in \N$ and using Lemma \ref{Lemma 250923_01}, Lemma \ref{Lemma 250923_03} and inequality \eqref{eq 260923_04} yields
	\begin{align}\label{eq 300424_01}
		\begin{aligned}
			&\sum\limits_{n=1}^m |\mathbb{E} (\overline{e}_h^n - \overline{e}_h^{n-1}, \dot{e}_h^n)_2| \\
			&\leq 6K_7(K_1+ K_3)C_p \frac{h^2}{\tau} + \frac{\tau}{2} \sum\limits_{n=1}^m  \mathbb{E} | \dot{e}_h^n |_{1,h}^2.
		\end{aligned}
	\end{align}
	Now we combine \eqref{eq 260923_02}, \eqref{eq 260923_03}, \eqref{eq 300424_01} and Corollary \ref{Corollary 260923_01} and get
	\begin{align*}
		&\frac{1}{2} \mathbb{E} \Vert \dot{e}_h^m \Vert_2^2 + \frac{1}{4}  \sum\limits_{n=1}^m \mathbb{E} \Vert \dot{e}_h^n - \dot{e}_h^{n-1}\Vert_2^2 + \frac{\tau}{2} \sum\limits_{n=1}^m\mathbb{E} |\dot{e}_h^n|_{1,h}^2 \\
		&\leq \bigg( 6K_7(K_1+ K_3)C_p \frac{1}{\tau} +\frac{K_{11}}{2} + 2 \Vert g' \Vert_{\infty}^2 K_9T \bigg) h^2 + 2 \Vert g' \Vert_{\infty}^2 \tau\sum\limits_{n=1}^m \mathbb{E} \Vert \dot{e}_h^{n-1} \Vert_2^2.
	\end{align*}
	Applying Lemma \ref{Gronwall} finalizes the proof of this lemma.
\end{proof}

\begin{rem}
The constants $K_9$ and $K_{10}$ respectively depend on $K_7$ and this constant may depend on $\operatorname{reg}(\Tau_h)$. Assuming \eqref{mesh_regularity}, we may overcome this dependence.
\end{rem}

\section{Proof of Theorem \ref{main theorem}}
Let $t \in [0,T]$ and $N \in \mathbb{N}$. Then there exists $n \in \{1,...,N-1\}$ such that $t \in [t_{n-1}, t_n)$ or $t \in [t_{N-1}, t_N]$. For $n \in \{1,...,N\}$ we have
\begin{align*}
&\mathbb{E} \Vert u(t) - u_{h,N}^r(t) \Vert_2^2 = \mathbb{E} \Vert u(t) - u_h^n \Vert_2^2\\
&\leq \mathbb{E} \Vert u(t) - u(t_n) + u(t_n) - v^n + v^n - \tilde{v}_h^n + \tilde{v}_h^n - u_h^n \Vert_2^2 \\
&\leq 16 \bigg(\mathbb{E} \Vert u(t) - u(t_n) \Vert_2^2 + \mathbb{E} \Vert u(t_n) - v^n \Vert_2^2 + \mathbb{E} \Vert v^n - \tilde{v}_h^n \Vert_2^2  + \mathbb{E} \Vert \tilde{v}_h^n - u_h^n \Vert_2^2 \bigg),
\end{align*}
where $v^n$ is the solution to \eqref{ES} with initial value $u_0$ and $\tilde{v}_h^n$ is the elliptic projection of $v^n$. Now, Lemma \ref{Lemma 220923_01}, Lemma \ref{Lemma 220923_03}, Corollary \ref{Corollary 260923_01} and Lemma \ref{Lemma 270923_01} yield
\begin{align*}
&\mathbb{E} \Vert u(t) - u_{h,N}^r(t) \Vert_2^2 \\
&\leq 16(K_4 \tau + K_6 \tau + K_9 h^2 + K_{10} (h^2 + \frac{h^2}{\tau})) \\
&\leq \Upsilon (\tau + h^2 + \frac{h^2}{\tau})
\end{align*}
for some constant $\Upsilon >0$ depending on the mesh regularity $\operatorname{reg}(\Tau_h)$ but not depending on $n, N$ and $h$ explicitly. If \eqref{mesh_regularity} is satisfied, $\Upsilon$ may depend on $\chi$ and the dependence of $\Upsilon$ on $\operatorname{reg}(\Tau_h)$ can be omitted. Therefore, taking the supremum over $t \in [0,T]$ on the left hand side and the right hand side of the inequality, we obtain
\begin{align*}
\sup_{t \in [0,T]} \mathbb{E} \Vert u(t) - u_{h,N}^r(t) \Vert_2^2 \leq \Upsilon (\tau + h^2 + \frac{h^2}{\tau}).
\end{align*}
\section{Computational experiments}
In this section we provide some computational experiments in order to validate our analytical result in Theorem \ref{main theorem}. The code for all experiments has been written in Python. The first two experiments, displayed in Tab. \ref{table 1} and Figure \ref{figure 1}, were performed on an Apple M2 Pro with 16GB RAM. The third experiment, displayed in Tab. \ref{table 2} and Figure \ref{figure 2}, was performed on an AMD Epyc 7502 with 1024GB RAM. In all experiments we use $\Lambda =(-1,1)^2$ and $T=1$. We use $u_0(x,y) = \frac{1}{16}(x^4 + 4x^3 - 2x^2 - 12x)(y^4 - \frac{8}{3}y^3 - 2y^2 + 8y)$ as a non-symmetric and non-trivial initial value satisfying the homogeneous Neumann boundary condition. For the noise term $g$ we use $g(u) = \sqrt{1+u^2}$. In order to compute the expectation we use the classical Monte Carlo method with $N_p = 15000$ realizations for the first two experiments and due to the very long calculation time $N_p = 1000$ realizations for the third experiment. We will divide the domain $\Lambda$ into subsquares with equal size which are our control volumes. The parameters $L$ or $L_{max}$ will denote the number of squares in each spatial direction, i.e., the total number of subsquares used for the spatial discretization is $L^2$ or $L_{max}^2$ and the 2-d Lebesgue measure of those subsquares is $4/L^2$ or $4/L_{max}^2$, respectively. Therefore, we have $h = \sqrt{8}/L$ or $h = \sqrt{8}/L_{max{}}$, respectively. In all the experiments, we compute the following squared $L^2$-error
\begin{equation*}
E(L, L_{max}, N, N_{max}) := \frac{1}{N_p} \sum\limits_{l=1}^{N_p} \Vert U_{L_{max}}^{N_{max}}(\omega_l) - U_L^N(\omega_l) \Vert_{L^2(\Lambda)}^2 \approx \mathbb{E} \Vert U_{L_{max}}^{N_{max}} - U_L^N \Vert_{L^2(\Lambda)}^2 
\end{equation*}
where $U_{L_{max}}^{N_{max}}(\omega_l)$, $\omega_l\in\Omega$, represents the finite volume approximation of $u(1, \omega_l)$ using $\tau = 1/N_{max}$ with $N_{max}$ time steps and $L_{max}$ subsquares in each spatial direction. Analogously, $U_L^N(\omega_l)$ represents the finite volume approximation of $u(1, \omega_l)$ using $\tau = 1/N$ with $N$ time steps and $L$ subsquares in each spatial direction. In Tab. \ref{table 1} and Figure \ref{figure 1} we present the results of the first two computational experiments in order to validate the terms $\tau$ and $h^2$ in Theorem \ref{main theorem}. For the first experiment we use $L_{max} = L = 40$, $N_{max} = 10240$ and $N \in \{64, 128, 256, 512, 1024 \}$ and for the second experiment we use $N_{max} = N = 10240$, $L_{max} = 40$ and $L \in \{6, 8, 10, 12, 14\}$. In the third and sixth column of Tab. \ref{table 1}  we present the order of convergence of the squared $L^2$-error compared to the previous time step or spatial step, respectively.
\begin{table}[!h]
\centering
\caption{Squared $L^2$-error with different time steps and different spatial steps}
\begin{tabular}{c|c|c||c|c|c}
\multicolumn{3}{c}{$N_{max}=10240, ~L_{max}=L=40$} & \multicolumn{3}{c}{$N_{max}=N=10240, ~L_{max}=40$} \\
\cmidrule(lr){1-3} \cmidrule(lr){4-6}
$N$ & $E(L, L_{max}, N, N_{max})$ & \text{order in time} & $L$ & $E(L, L_{max}, N, N_{max})$ & \text{order in space} \\
\hline
$64$ & $3.839\cdot 10^{-2}$ & \phantom{-} & $6$ & $1.533\cdot 10^{-4}$ & \phantom{-}  \\
\hline
$128$ & $1.783\cdot 10^{-2}$ & $1.107$  & $8$ & $6.236\cdot 10^{-5}$ & $3.126$ \\
\hline
$256$ & $8.851\cdot 10^{-3}$ & $1.010$ & $10$ & $3.269\cdot 10^{-5}$ & $2.895$\\
\hline
$512$ & $4.122\cdot 10^{-3}$ & $1.103$ & $12$ & $2.234\cdot 10^{-5}$ & $2.088$\\
\hline
$1024$ & $1.971\cdot 10^{-3}$ & $1.064$ & $14$ & $1.585\cdot 10^{-5}$ & $2.224$ \\
\end{tabular}
\label{table 1}
\end{table}
\\
\noindent
The last experiment is presented in Tab. \ref{table 2} and Figure \ref{figure 2}. We tried to validate the term $h^2/ \tau$ in Theorem \ref{main theorem}. To this end, we set $L_{max} = 16$ and $L \in \{4,8\}$ and choose high values for $N_{max}$ and $N$. More precisely, we set $N_{max} = 2^{22} = 4194304$ and $N \in \{2^{14},..., 2^{19}\} = \{16384, 25768, 65536, 131072, 262144 \}$. The result for $L=4$ indicates that there might be some term of $\tau$ with a negative exponent, but the increase of the error between some of the time steps might be caused due to computational inaccuracies. For $L=8$ the graph shows no increase of the error with increasing step size. 
\begin{table}[htbp]
\centering
\caption{High number of time steps with $L_{max}=16$ and $L=4$ or $L=8$, respectively.}
\begin{tabular}{c|c||c|c}
\multicolumn{2}{c}{$L=4$} & \multicolumn{2}{c}{$L=8$} \\
\cmidrule(lr){1-2} \cmidrule(lr){3-4}
$N$ & $E(L, L_{max}, N, N_{max})$ & $N$ & $E(L, L_{max}, N, N_{max})$\\
\hline
$2^{14}$ & $5.994 \cdot 10^{-4}$ & $2^{14}$ & $1.730\cdot 10^{-4}$ \\
\hline
$2^{15}$ & $5.084\cdot 10^{-4}$ & $2^{15}$ & $1.040\cdot 10^{-4}$\\
\hline
$2^{16}$ & $5.277\cdot 10^{-4}$ & $2^{16}$ & $7.794\cdot 10^{-5}$\\
\hline
$2^{17}$ & $5.391\cdot 10^{-4}$ & $2^{17}$ & $6.735\cdot 10^{-5}$\\
\hline
$2^{18}$ & $5.278\cdot 10^{-4}$ & $2^{18}$ & $5.470\cdot 10^{-5}$ \\
\hline
$2^{19}$ & $5.096\cdot 10^{-4}$ & $2^{19}$ & $4.771\cdot 10^{-5}$\\
\end{tabular}
\label{table 2}
\end{table}
\begin{figure}[!h]
    \centering
    \includegraphics[width=0.55\linewidth]{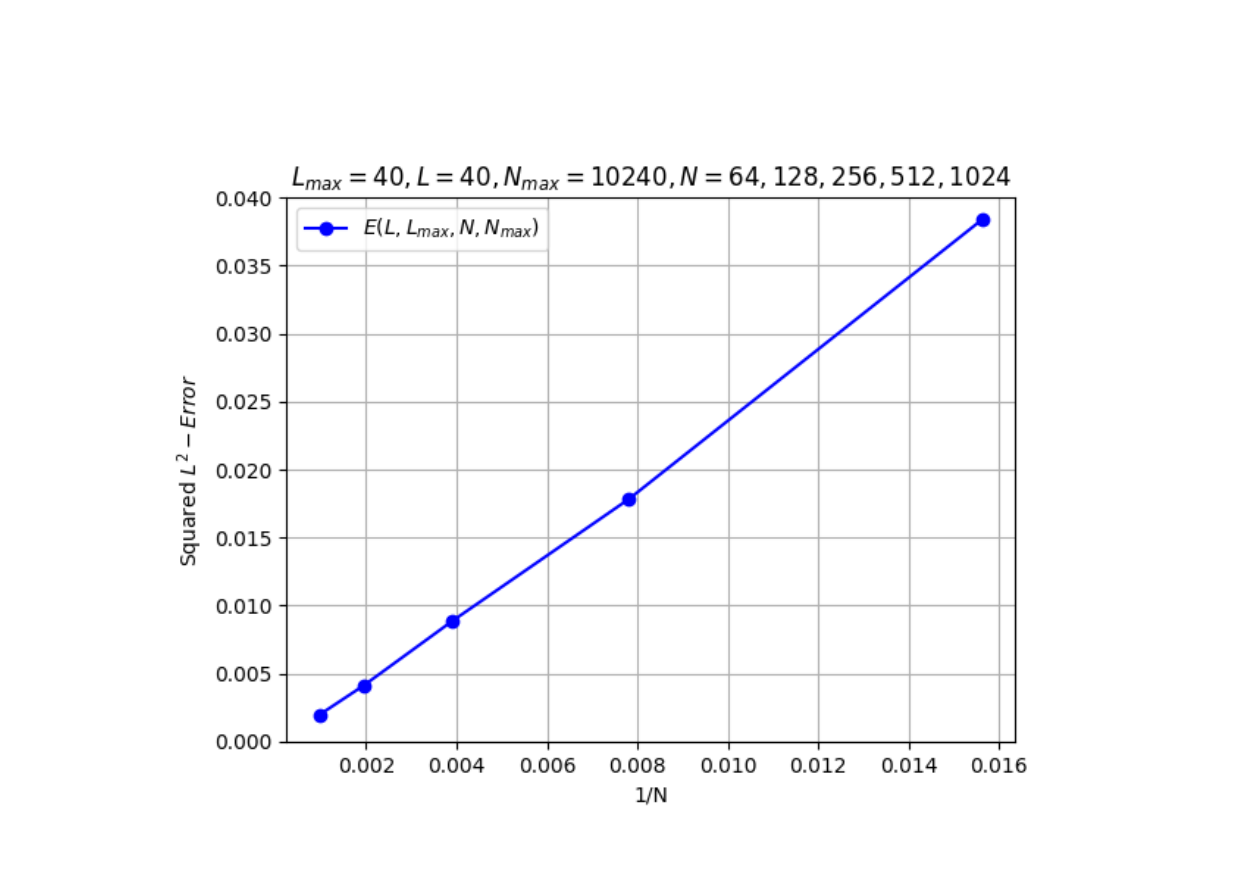}
    \hspace{-50pt}\includegraphics[width=0.55\linewidth]{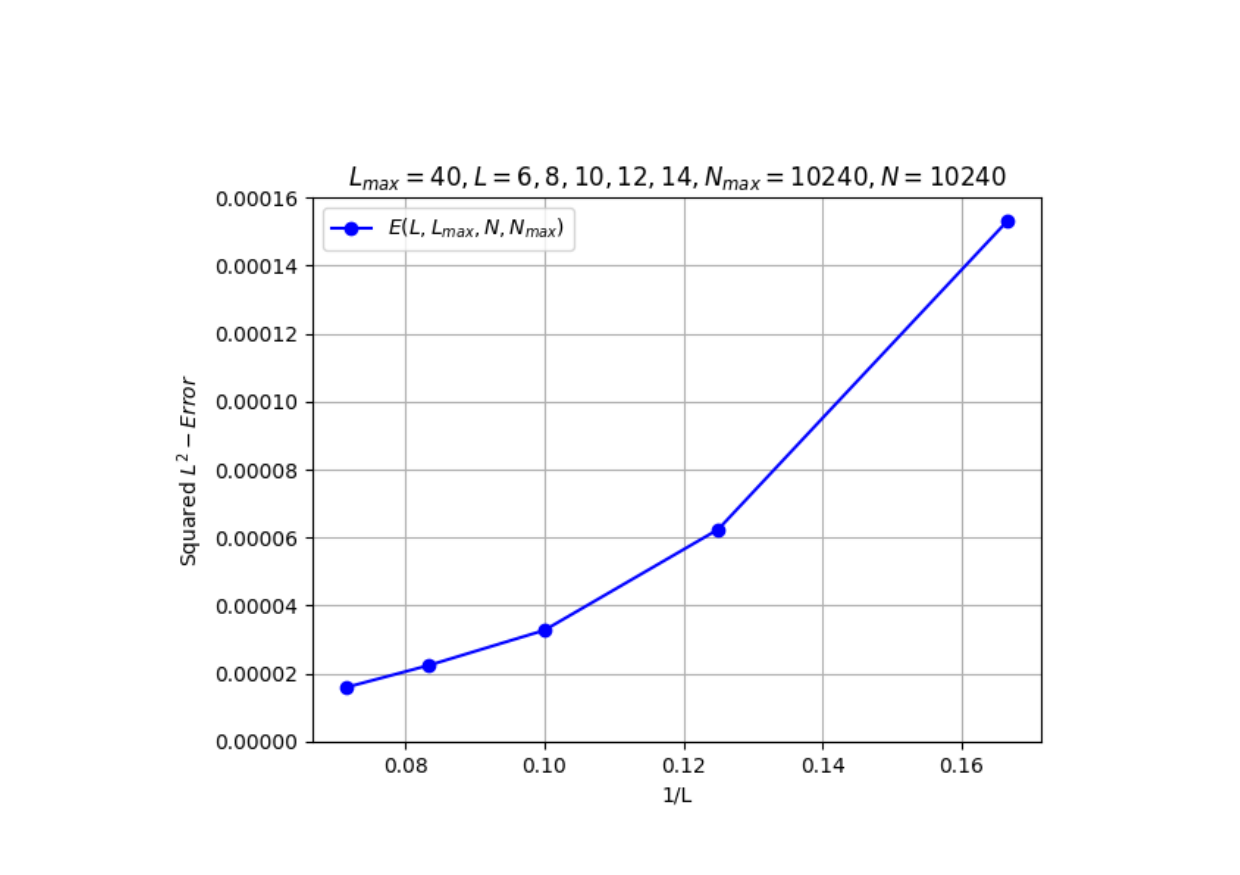}
    \caption{First graph: squared $L^2$-error with different time steps. Second graph: squared $L^2$-error with different spatial steps.}
    \label{figure 1}
\end{figure}
\begin{figure}[!h]
    \centering
    \includegraphics[width=0.55\linewidth]{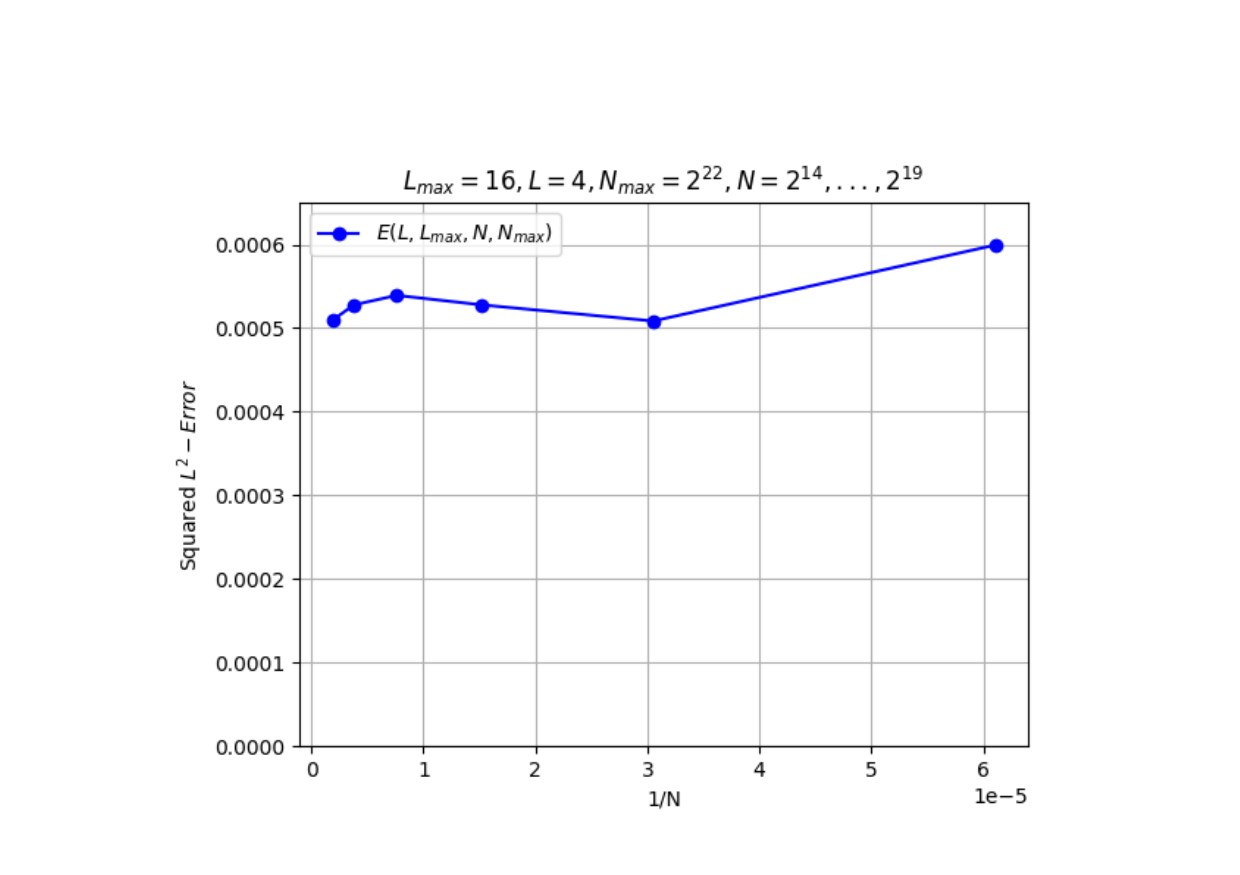}
    \hspace{-50pt}\includegraphics[width=0.55\linewidth]{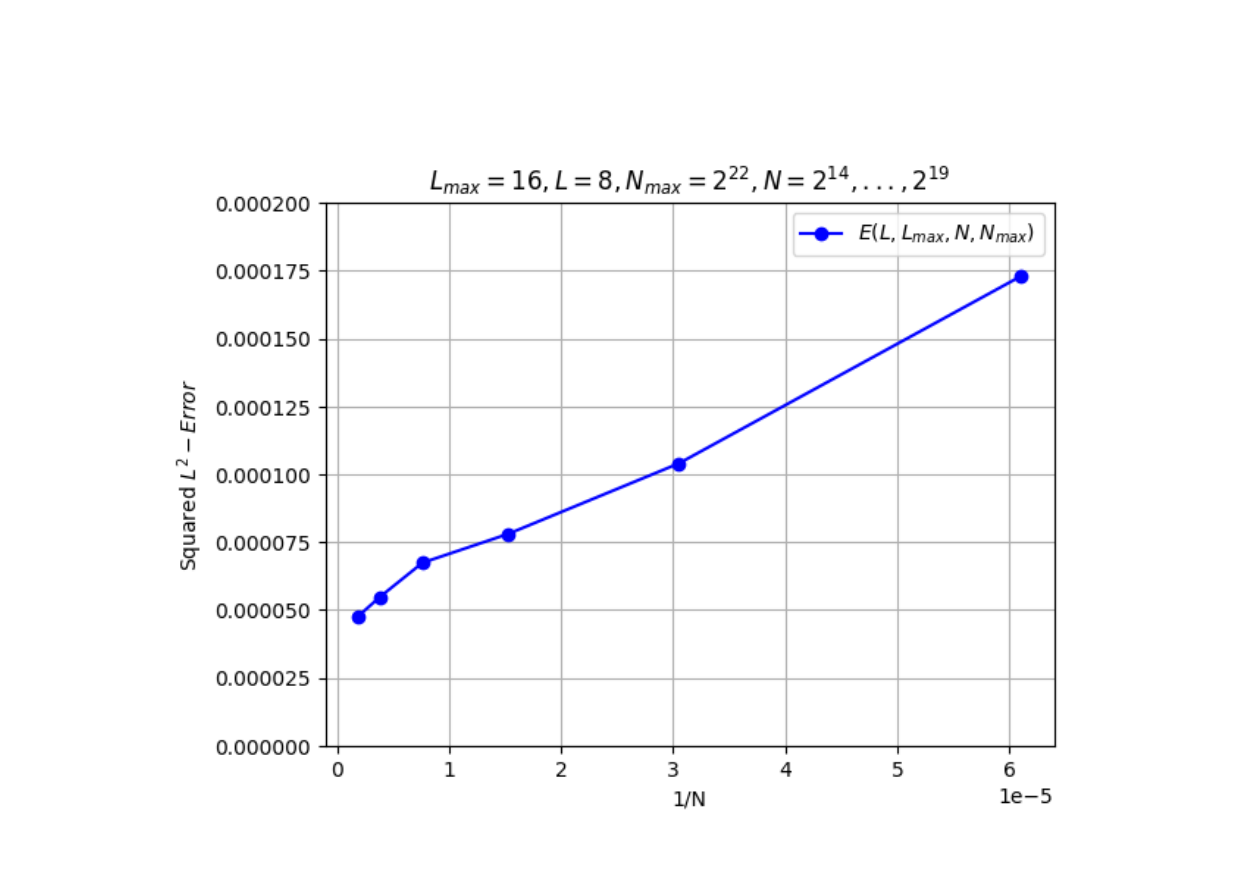}
    \caption{High number of time steps with $L_{max}=16$ and $L=4$ or $L=8$, respectively.}
    \label{figure 2}
\end{figure}

\section{Appendix}
\begin{lem}\label{Theorem 110124_01}
Let $d\in \{2,3\}$. Then, for any $u \in H^2(\Lambda)$ satisfying the weak homogeneous Neumann boundary condition we have
\begin{align*}
\Vert u \Vert_{H^2(\Lambda)}^2 \leq 12(\Vert \Delta u \Vert_2^2 + \Vert u \Vert_2^2),
\end{align*}
where $\Delta$ denotes the Laplace operator on $H^1(\Lambda)$ associated with the weak formulation of the homogeneous Neumann boundary condition.\\
Especially, for any random variable $u : \Omega \to H^2(\Lambda)$ we obtain
\begin{align*}
\Vert u \Vert_{H^2(\Lambda)}^2 \leq 12(\Vert \Delta u \Vert_2^2 + \Vert u \Vert_2^2) ~~~\mathbb{P}\text{-a.s. in}~\Omega.
\end{align*}
\end{lem}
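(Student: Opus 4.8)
The plan is to split the square of the $H^2$-norm into its zeroth-, first- and second-order parts,
\begin{align*}
\Vert u \Vert_{H^2(\Lambda)}^2 = \Vert u \Vert_2^2 + \Vert \nabla u \Vert_2^2 + \sum_{i,j=1}^d \Vert \partial_{ij} u \Vert_2^2,
\end{align*}
and to bound the gradient and the Hessian contributions separately against $\Vert \Delta u \Vert_2^2$ and $\Vert u \Vert_2^2$. The first-order term is the easy one: since $u$ satisfies the weak homogeneous Neumann condition, testing the weak formulation against $u$ itself produces no boundary contribution, so that $\Vert \nabla u \Vert_2^2 = -(\Delta u, u)_2$, and Cauchy--Schwarz together with Young's inequality give $\Vert \nabla u \Vert_2^2 \leq \tfrac12 \Vert \Delta u \Vert_2^2 + \tfrac12 \Vert u \Vert_2^2$.

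The heart of the argument is the Hessian term. First I would reduce to a dense smooth class: since $\Lambda$ is polygonal, hence Lipschitz, $C^\infty(\overline{\Lambda})$ is dense in $H^2(\Lambda)$, and I would approximate $u$ by smooth functions compatible with the Neumann condition, prove the identities below for smooth $u$, and then pass to the limit. For smooth $u$ the decisive tool is the Rellich-type integration-by-parts identity
\begin{align*}
\sum_{i,j=1}^d \int_\Lambda (\partial_{ij} u)^2 \, dx = \int_\Lambda (\Delta u)^2 \, dx + \mathcal{B}(u),
\end{align*}
obtained by integrating $\int_\Lambda \partial_{ii} u \, \partial_{jj} u \, dx$ by parts twice in each coordinate pair; here $\mathcal{B}(u)$ collects the boundary integrals, which along each (flat) face of $\partial\Lambda$ reduce to tangential-derivative expressions.

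The main obstacle I anticipate is the sign of $\mathcal{B}(u)$, and this is exactly where the geometry of $\Lambda$ enters. On a convex polygonal domain every face carries no curvature, so the face contributions are of tangential-derivative type with the favourable sign, while the edges and vertices contribute with the correct sign by convexity; consequently $\mathcal{B}(u) \leq 0$ and $\sum_{i,j} \Vert \partial_{ij} u \Vert_2^2 \leq \Vert \Delta u \Vert_2^2$. For $d=2$ this is precisely the content of the Grisvard results cited above; for general $d$ I would argue face by face, the flat pieces producing no curvature term and the lower-dimensional strata being controlled by convexity. Making the smooth approximation respect the Neumann condition and carrying out this sign analysis on the faces is the step on which I expect to spend most of the effort.

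Once both estimates are available, combining them yields
\begin{align*}
\Vert u \Vert_{H^2(\Lambda)}^2 \leq \Vert u \Vert_2^2 + \tfrac12 \Vert \Delta u \Vert_2^2 + \tfrac12 \Vert u \Vert_2^2 + \Vert \Delta u \Vert_2^2 = \tfrac32 \big( \Vert \Delta u \Vert_2^2 + \Vert u \Vert_2^2 \big),
\end{align*}
which is already a universal bound; the value $12$ in the statement is then a comfortable (non-sharp) over-estimate, which also absorbs the looser bookkeeping one may prefer when counting the mixed second derivatives and the constants in the limiting procedure. Finally, since the whole chain of inequalities is pathwise, applying it $\mathbb{P}$-almost surely to a random variable $u \colon \Omega \to H^2(\Lambda)$ gives the stated a.s. version for free.
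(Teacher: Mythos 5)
Your overall strategy is sound and in fact captures the mathematical substance behind the reference on which the paper's proof leans: the gradient bound via $\Vert\nabla u\Vert_2^2=-(\Delta u,u)_2$ is correct, and the Rellich-type identity with $\mathcal{B}(u)\le 0$ on convex domains is exactly the mechanism inside Grisvard's a priori estimate. If completed, your argument would even give the sharper constant $\tfrac32$ in place of $12$ (the paper's $12$ is just $(\sqrt6)^2\cdot 2$, obtained by squaring the cited bound $\Vert u\Vert_{H^2}\le\sqrt6\,\Vert -\Delta u+u\Vert_2$ and splitting the right-hand side).

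The genuine gap is the approximation step you yourself flag as the main effort. On a polygonal (for $d=3$, polyhedral) domain you cannot simply approximate a given $u\in H^2(\Lambda)$ satisfying the weak Neumann condition by functions in $C^\infty(\overline{\Lambda})$ that still satisfy the Neumann condition and then pass to the limit in the Rellich identity: density of Neumann-compatible smooth functions is not available in general on non-smooth domains, and if you approximate in $H^2$ without preserving the boundary condition, the boundary functional $\mathcal{B}$ loses its sign (it is only $\le 0$ \emph{because} of the Neumann condition), while $\mathcal{B}$ itself involves traces of second derivatives that do not pass to the limit under mere $H^2$ convergence; the edge and vertex strata for $d=3$ compound the difficulty. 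This is precisely why the paper, following Grisvard's Theorem 3.2.1.3, approximates the \emph{domain} rather than the function: it sets $f:=-\Delta u+u$, extends $f$ by zero to smooth convex domains $\Lambda_m\supset\Lambda$ with $\operatorname{dist}(\partial\Lambda,\partial\Lambda_m)\to0$, solves $-\Delta u_m+u_m=\tilde f$ there (where the Rellich identity is legitimate and yields $\Vert u_m\Vert_{H^2(\Lambda_m)}\le\sqrt6\,\Vert\tilde f\Vert_{L^2(\Lambda_m)}$), and concludes by weak lower semicontinuity of the norm along $(u_m)|_\Lambda\rightharpoonup u$ in $H^2(\Lambda)$. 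To close your proof you would need to replace your function-approximation step by this domain-approximation device (or an equivalent one); as written, the key inequality $\sum_{i,j}\Vert\partial_{ij}u\Vert_2^2\le\Vert\Delta u\Vert_2^2$ is asserted for exactly the class of domains and functions where its justification is the hard part.
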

\begin{proof}
Let $u \in H^2(\Lambda)$ satisfying the weak homogeneous Neumann boundary conditions. We set $f:= -\Delta u + u \in L^2(\Lambda)$. Then, according to Theorem 3.2.1.3 in \cite{Grisvard85} $u$ is the unique solution of $-\Delta u + u = f$ satisfying the weak homogeneous Neumann boundary conditions. Now, we follow the ideas of the proof of Theorem 3.2.1.3 in \cite{Grisvard85}. There exists a sequence $(\Lambda_m)_m$ such that $\Lambda_m\subset \mathbb{R}^d$ is bounded, open, convex, $\Lambda \subset \Lambda_m$, $\operatorname{dist}(\partial \Lambda, \partial \Lambda_m) \to 0$ as $m \to \infty$ and $\Lambda_m$ has a $\mathcal{C}^2$-boundary for any $m \in \mathbb{N}$. We set
\begin{align*}
\tilde{f}:= \begin{cases}
f, ~&\text{in}~ \Lambda, \\
0, ~&\text{in}~ \mathbb{R}^d \setminus \Lambda.
\end{cases}
\end{align*}
Then, since $\tilde{f} \in L^2(\Lambda_m)$ for any $ m \in \mathbb{N}$, there exists a unique $u_m \in H^2(\Lambda_m)$ satisfying the weak homogeneous Neumann boundary conditions w.r.t. $\Lambda_m$ such that $-\Delta u_m + u_m = \tilde{f}$ in $\Lambda_m$. According to the proof of Theorem 3.2.1.3 in \cite{Grisvard85} there exists $C>0$ such that $\Vert u_m \Vert_{H^2(\Lambda_m)} \leq C$ for any $m \in \mathbb{N}$ and $(u_m)_{|\Lambda} \rightharpoonup u$ in $H^2(\Lambda)$ for a subsequence. Now, Theorem 3.1.2.3 in \cite{Grisvard85} yields
\begin{align*}
\Vert u_m \Vert_{H^2(\Lambda_m)} \leq \sqrt{6} \Vert - \Delta u_m + u_m \Vert_{L^2(\Lambda_m)}.
\end{align*}
Hence, we obtain
\begin{align*}
\Vert u \Vert_{H^2(\Lambda)} &\leq \liminf\limits_{m \to \infty} \Vert u_m \Vert_{H^2(\Lambda)}\\
&\leq \liminf\limits_{m \to \infty} \Vert u_m \Vert_{H^2(\Lambda_m)}\\
&\leq \liminf\limits_{m \to \infty} \sqrt{6} \Vert - \Delta u_m + u_m \Vert_{L^2(\Lambda_m)}\\
&= \liminf\limits_{m \to \infty} \sqrt{6} \Vert \tilde{f} \Vert_{L^2(\Lambda_m)}\\
&= \sqrt{6} \Vert f \Vert_{L^2(\Lambda)}\\
&= \sqrt{6} \Vert - \Delta u + u \Vert_{L^2(\Lambda)} \\
&\leq \sqrt{6} (\Vert \Delta u \Vert_{L^2(\Lambda)} + \Vert u \Vert_{L^2(\Lambda)})
\end{align*}
and therefore we may conclude
\begin{align*}
\Vert u \Vert_{H^2(\Lambda)}^2 \leq 6 (\Vert \Delta u \Vert_{L^2(\Lambda)} + \Vert u \Vert_{L^2(\Lambda)})^2 \leq 12 (\Vert \Delta u \Vert_{L^2(\Lambda)}^2 + \Vert u \Vert_{L^2(\Lambda)}^2)
\end{align*}
\end{proof}
\begin{lem}\label{Lemma 240124_01}
Let $u \in H^2(\Lambda)$, $\mathcal{T}$ an admissible mesh, $K \in \mathcal{T}$ and $y \in K$. Then, for any $ 1 \leq q \leq 2$, there exists a constant $C=C(q, \Lambda)>0$ such that
\begin{align*}
\int_K |u(x) - u(y) |^q \, dx \leq C h^q \Vert u \Vert_{W^{2,q}(K)}^q.
\end{align*}
Especially, for any function $v$ of the form $v(x):= \sum\limits_{K \in \mathcal{T}} u(y_K) \mathds{1}_{K}(x)$, where $y_K \in K$ and $x \in \Lambda$, we have
\begin{align*}
\Vert u - v \Vert_{L^2(\Lambda)}^2 \leq C(2,\Lambda) h^2 \Vert u \Vert_{H^2(\Lambda)}^2.
\end{align*}
\end{lem}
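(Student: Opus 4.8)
The plan is to prove the per-cell estimate first and obtain the global $L^2$-bound by summation. For the summation step, once the local inequality $\int_K |u(x)-u(y)|^q\,dx \le C h^q \Vert u\Vert_{W^{2,q}(K)}^q$ is available with $q=2$ and $y=y_K$, one writes $\Vert u - v\Vert_{L^2(\Lambda)}^2 = \sum_{K\in\mathcal{T}}\int_K |u(x)-u(y_K)|^2\,dx$ and uses additivity of the $H^2$-norm over the partition, $\sum_{K}\Vert u\Vert_{H^2(K)}^2 = \Vert u\Vert_{H^2(\Lambda)}^2$, which gives the second assertion at once. So the entire content lies in the local estimate.

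The first point to settle is that the value $u(y)$ is meaningful, and this is where $u\in W^{2,q}$ (rather than merely $W^{1,q}$) enters decisively: since $d\in\{2,3\}$ and $2q>d$ in the relevant range, the Sobolev embedding $W^{2,q}(K)\hookrightarrow C^0(\overline K)$ makes point evaluation continuous, and by density of $C^\infty(\overline K)$ in $W^{2,q}(K)$ (recall $K$ is convex, hence Lipschitz) it suffices to prove the inequality for smooth $u$ and pass to the limit. The key decomposition then introduces the cell mean $u_K := m_K^{-1}\int_K u\,dx$ and splits $u(x)-u(y) = (u(x)-u_K) + (u_K - u(y))$, so that $\int_K|u(x)-u(y)|^q\,dx \le 2^{q-1}\big(\int_K |u(x)-u_K|^q\,dx + m_K\,|u_K-u(y)|^q\big)$.

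The first term is controlled by the Poincaré–Wirtinger inequality on the convex cell $K$: $\int_K|u-u_K|^q\,dx \le C\,\operatorname{diam}(K)^q\Vert\nabla u\Vert_{L^q(K)}^q \le C h^q\Vert u\Vert_{W^{2,q}(K)}^q$, where convexity yields a scale-invariant constant (of Payne–Weinberger type) independent of the particular cell. It is worth stressing that the naive attempt to bound the \emph{whole} difference by the fundamental theorem of calculus along the segment $[y,x]\subset K$ fails: one is led to $\int_0^1 t^{-d}\,dt$ (or $\int_0^1 t^{-d/q}\,dt$ after H\"older), which diverges at $t=0$ for $q\le d$. This divergence is exactly the obstruction that forces second derivatives into the estimate of the point value.

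The main obstacle is therefore the term $m_K|u_K-u(y)|^q$, comparing the mean with a point value. Here I would invoke a scaled Morrey/Sobolev inequality on the convex cell, $\Vert u - u_K\Vert_{L^\infty(K)} \le C\,\operatorname{diam}(K)^{1-d/q}\big(\Vert\nabla u\Vert_{L^q(K)} + \operatorname{diam}(K)\Vert\nabla^2 u\Vert_{L^q(K)}\big)$, whence $|u_K-u(y)|\le \Vert u - u_K\Vert_{L^\infty(K)}$ and, using $m_K\le c_d\operatorname{diam}(K)^d$ (valid for any set of that diameter), $m_K|u_K-u(y)|^q \le C h^q\Vert u\Vert_{W^{2,q}(K)}^q$. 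The delicate part is producing this embedding with a constant uniform over all admissible cells and with the correct powers of $\operatorname{diam}(K)$; I would obtain it from the convexity-based integral (Riesz-potential) representation of $u(y)-u_K$ in the spirit of Gilbarg–Trudinger, whose constant scales only with the diameter of the convex set, and then iterate it once (applying the representation to $\nabla u$) to bring in $\nabla^2 u$. The resulting potential integral converges precisely in the regime $2q>d$, i.e. for $d\in\{2,3\}$ and $q$ up to $2$, which is the step where both the restriction to low dimension and the need for $W^{2,q}$-regularity become essential. Collecting the two terms yields the local estimate, and summation over $K$ gives the global one.
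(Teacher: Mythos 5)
Your overall architecture (per-cell estimate plus summation over the partition) matches the paper, but your route to the local estimate is different and contains a false step. The paper does not split through the cell mean at all: it writes the exact second-order Taylor expansion $u(x)-u(y)=\nabla u(x)\cdot(x-y)+R(x,y)$ with the gradient anchored at the \emph{integration} variable $x$, so that $\int_K|\nabla u(x)\cdot(x-y)|^q\,dx\le h^q\Vert\nabla u\Vert_{L^q(K)}^q$ needs no change of variables, and only the integral remainder (which already carries the factor $|x-y|^{2q}\le h^{2q}$) is treated by substituting along the segment, convexity keeping the segment inside $K$. Your objection that the fundamental theorem of calculus leads to a divergent $\int_0^1 t^{-d}\,dt$ is correct for the \emph{first-order} expansion anchored at $y$, but it does not touch the first-order term of the paper's expansion, which is exactly the point of anchoring at $x$ (the Jacobian issue survives only in the remainder, where the extra power of $h$ and the Taylor weight are available).

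The genuine gap is your treatment of $m_K|u_K-u(y)|^q$. The inequality you rely on, $\Vert u-u_K\Vert_{L^\infty(K)}\le C\operatorname{diam}(K)^{1-d/q}\big(\Vert\nabla u\Vert_{L^q(K)}+\operatorname{diam}(K)\Vert\nabla^2u\Vert_{L^q(K)}\big)$ with $C=C(d,q)$, is false for general convex cells: take $d=2$, $q=2$, $K=(0,1)\times(0,\varepsilon)$ and $u(x_1,x_2)=f(x_1)$; the left-hand side is $\Vert f-\bar f\Vert_{L^\infty(0,1)}$ while the right-hand side is $C\sqrt{\varepsilon}\,\Vert f\Vert_{H^2(0,1)}\to0$. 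The constant in such an embedding necessarily degrades with the eccentricity of the cell, and the Gilbarg--Trudinger potential representation you invoke does \emph{not} have a constant scaling only with the diameter: it carries the factor $\operatorname{diam}(K)^d/m_K$, unbounded over thin convex cells. Since the lemma asserts $C=C(q,\Lambda)$ with no shape-regularity hypothesis (admissible meshes here may contain arbitrarily thin convex cells, and the mesh-regularity parameter is not allowed to enter this constant), this step does not close; your replacement $m_K\le c_d\operatorname{diam}(K)^d$ compounds the problem by discarding precisely the small factor $m_K$ that would otherwise compensate. Note also that the obvious repair, $m_K|u_K-u(y)|^q\le\int_K|u(x)-u(y)|^q\,dx$ by Jensen, is circular, so the segmentwise Taylor argument really is the natural tool. (Two minor points: the embedding $W^{2,q}\hookrightarrow C^0$ you use to justify point values fails for $q$ near $1$ when $d=2,3$; point evaluation is legitimate only because the hypothesis is $u\in H^2(\Lambda)$ and $d\le3$. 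Your Poincar\'e--Wirtinger step for the first term is fine, since on convex domains that constant is genuinely shape-independent.)
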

\begin{proof}
Let $x,y \in K$. Then we have
\begin{align*}
u(x) - u(y) = \nabla u(x) \cdot (x-y) + \int_0^1 (1-s) (D^2(u)((1-s)y + sx)(x-y)) \cdot (x-y) \, ds. 
\end{align*}
Jensen inequality yields:
\begin{align*}
\int_K |u(x) - u(y) |^q \, dx &\leq 2^{q-1}  \int_K |\nabla u(x) (x-y) |^q \, dx \\&
+ 2^{q-1} \int_K \int_0^1 |(1-s) (D^2(u)((1-s)y + sx)(x-y)) \cdot (x-y)|^q \, ds \, dx.
\end{align*}
Now, a change of variables $z:= (1-s)y + sx$ yields
\begin{align*}
\int_K |u(x) - u(y) |^q \, dx &\leq 2^{q-1}  h^q \Vert \nabla u \Vert_{L^q(K)}^q + 2^{q-1} h^{2q}\Vert D^2(u) \Vert_{L^q(K)}^q \\
&\leq C h^q \Vert u \Vert_{W^{2,q}(K)}
\end{align*}
for $C=C(q, \Lambda)= 2^{q-1}(1+ \operatorname{diam}(\Lambda)^q)$.
\end{proof}

\section*{Acknowledgments}
The authors would like to thank Andreas Prohl for his valuable suggestions.

\bibliographystyle{plain}
\bibliography{Submission_Sapountzoglou_Zimmermann_CMAM_revision.bib}

\end{document}